\definecolor{darkblue}{rgb}{0,0,0.7} % darkblue color
\newcommand{\darkblue}{\color{darkblue}} % darkblue command
\newcommand{\Dfn}[1]{\emph{\darkblue #1}} % emphasis of a definition
\newtheorem{theorem}{Theorem}[section]
\newtheorem{proposition}[theorem]{Proposition}
\newtheorem{corollary}[theorem]{Corollary}
\newtheorem{lemma}[theorem]{Lemma}
\newtheorem{algorithm}[theorem]{Algorithm}
\theoremstyle{definition}
\newtheorem{definition}[theorem]{Definition}
\newtheorem{example}[theorem]{Example}
\newtheorem{conjecture}[theorem]{Conjecture}
\newtheorem{openproblem}[theorem]{Open Problem}
\newtheorem{remark}[theorem]{Remark}
\newcommand\woword{\mathbf w _\circ}
\newcommand\wo{w_\circ}
\newcommand\woc{{\woword(\c)}}
\newcommand\cwoc{{\c\woc}}
\newcommand\ckwoc{{\c^k\woc}}
\newcommand\Id{{\bf 1}}
\def\symdiag#1{[#1]_{\operatorname{sym}}}
\def\Dm{\Delta_m}
\def\Dmk{\Delta_{m,k}}
\def\DWk{\Delta^{k}_c(W)}
\def\Dk#1{\Delta^{k}_c(#1)}
\def\Phipm{\Phi_{\geq -1}}
\def\S#1{\mathcal{S}_{#1}}
\def\inv{\operatorname{inv}}
\def\c{{\bf c}}
\def\Lrc{\mathsf{Lr}_c}
\def\r{{\mathsf r}}
\def\Lrvar#1{{\mathsf{Lr}_{#1}}}
\newcommand{\rotatedword}[2]{#1_{\stackrel{\circlearrowleft}{#2}}}
\def\Z{\mathbb{Z}}
\title{Subword complexes, cluster complexes, and generalized multi-associahedra}
\author[C.~Ceballos]{Cesar Ceballos$^{ \dagger}$}
\address[C.~Ceballos]{Inst.\ Mathematics, FU Berlin, Arnimallee 2, 14195 Berlin, Germany.}
\email{ceballos@math.fu-berlin.de}
\thanks{$^\dagger$supported by DFG via the Research Training Group ``Methods for Discrete Structures" and the Berlin Mathematical School}
\author[J.-P.~Labb\'e]{Jean-Philippe Labb\'e$^{ \ddagger}$}
\address[J.-P.~Labb\'e]{Inst.\ Mathematics, FU Berlin, Arnimallee 2, 14195 Berlin, Germany.}
\email{labbe@math.fu-berlin.de}
\thanks{$^\ddagger$supported by a FQRNT Doctoral scholarship.}
\author[C.~Stump]{Christian Stump$^{ *}$}
\address[C.~Stump]{Institut f\"ur Algebra, Zahlentheorie, Diskrete Mathematik, Leibniz Universit\"at Hannover, Germany}
\email{stump@math.uni-hannover.de}
\thanks{$^*$partially supported by a CRM-ISM postdoctoral fellowship and the DFG via the Research Group \lq\lq Methods for Discrete Structures\rq\rq.}
\keywords{Subword complex, cluster complex, generalized associahedron, multi-triangulation, Auslander-Reiten quiver, Coxeter--Catalan combinatorics}
\subjclass[2000]{Primary 05E45; Secondary 20F55, 13F60}
\begin{document}

\begin{abstract}
In this paper, we use subword complexes to provide a uniform approach to finite type cluster complexes and multi-associahedra. We introduce, for any finite Coxeter group and any nonnegative integer $k$, a spherical subword complex called \emph{multi-cluster complex}. For~$k=1$, we show that this subword complex is isomorphic to the cluster complex of the given type. We show that multi-cluster complexes of types $A$ and $B$ coincide with known simplicial complexes, namely with the simplicial complexes of multi-triangulations and centrally symmetric multi-triangulations respectively. Furthermore, we show that the multi-cluster complex is \emph{universal} in the sense that every spherical subword complex can be realized as a link of a face of the multi-cluster complex. 
\end{abstract}

\maketitle

%%%%%%%%%%%%%%%%%%%%%%%%%%%%%%%%%%%%%%%%%%%%%%%%%%%%%%%%%%%%%%%%
%
%  Section: Introduction
%
%%%%%%%%%%%%%%%%%%%%%%%%%%%%%%%%%%%%%%%%%%%%%%%%%%%%%%%%%%%%%%%%

\section{Introduction}

\emph{Cluster complexes} were introduced by S. Fomin and A. Zelevinsky to encode exchange graphs of cluster algebras~\cite{fomin_y-systems_2003}. N.~Reading then showed that the definition of cluster complexes can be extended to all finite Coxeter groups~\cite{reading_clusters_2007,reading_sortable_2007}. In this article, we present a new combinatorial description of cluster complexes using \emph{subword complexes}. These were introduced by A. Knutson and E. Miller, first in type~$A$ to study the combinatorics of determinantal ideals and Schubert polynomials \cite{knutson_grobner_2005}, and then for all Coxeter groups in \cite{knutson_subword_2004}. We provide, for any finite Coxeter group $W$ and any Coxeter element $c \in W$, a subword complex which is isomorphic to the $c$-cluster complex of the corresponding type, and we thus obtain an explicit type-free characterization of $c$-clusters. This characterization generalizes a description for crystallographic types obtained by K.~Igusa and R. Schiffler in the context of cluster categories \cite{igusa_exceptional_2010}. The present approach allows us to define a new family of simplicial complexes by introducing an additional parameter $k$, such that one obtains $c$-cluster complexes for $k=1$. In type~$A$, this simplicial complex turns out to be isomorphic to the simplicial complex of multi-triangulations of a convex polygon which was described by C.~Stump in \cite{stump_new_2011} (see also~\cite{SS2010}), and, in a similar manner, by V.~Pilaud and M.~Pocchiola in the framework of sorting networks~\cite{pilaud_multitriangulations_2010}. In type~$B$, we obtain that this simplicial complex is isomorphic to the simplicial complex of centrally symmetric multi-triangulations of a regular convex polygon. Therefore, we call them \emph{multi-cluster complexes}. 
Besides, there is a \lq\lq naive\rq\rq\ way of generalizing the
cluster complex by considering the simplicial complex of sets of
almost positive roots which do not contain any $k+1$ pairwise not
compatible roots. In fact, this alternative definition lacks basic
properties of the cluster complex, see Remark~\ref{rem:naive_multi_def}.
The multi-cluster complexes introduced in this paper
are different from \emph{generalized cluster complexes} as defined by S.~Fomin and N.~Reading~\cite{fomin_generalized_2005}.
In the generalized cluster complex, the vertices are given by the simple negative roots together with several distinguished copies of the positive roots, while the vertices of the multi-cluster complex correspond to the positive roots together with several distinguished copies of the simple negative roots. Multi-cluster complexes turn out to be intimately related to Auslander-Reiten quivers and repetition quivers~\cite{gabriel_representations_1997}. In particular, the Auslander-Reiten translate on facets of multi-cluster complexes in types $A$ and $B$ corresponds to cyclic rotation of (centrally symmetric) multi-triangulations. Furthermore, multi-cluster complexes uniformize questions about multi-triangulations, subword complexes, and cluster complexes. One important example concerns the open problem of realizing the simplicial complexes of (centrally symmetric) multi-triangulations and spherical subword complexes as boundary complexes of convex polytopes.

\vspace*{10pt}

In Section~\ref{sec:mainresults}, we recall the various objects in question, namely multi-triangulations, subword complexes, and cluster complexes. Moreover, the main results are presented and the multi-cluster complex is defined (Definition~\ref{def:multi-cluster complex}). In Section~\ref{sec:spherical subword complexes}, we study flips on spherical subword complexes and present two natural isomorphisms between subword complexes whose word differ by commutation or by rotation of letters. In Section~\ref{sec:sorting}, we prove that the multi-cluster complex is independent of the choice of the Coxeter element (Theorem~\ref{theorem:independent_of_c}). Section~\ref{sec:maintheorem} contains a proof that the multi-cluster complex is isomorphic to the cluster complex for $k=1$ (Theorem~\ref{th:main theorem}). In Section~\ref{sec:polytopality}, we discuss possible generalizations of associahedra using subword complexes; we review known results about polytopal realizations, prove polytopality of multi-cluster cluster complexes of rank $2$ (Theorem~\ref{ex:I_2}), and prove that the multi-cluster complex is universal in the sense that every spherical subword complex is the link of a face of a multi-cluster complex (Theorem~\ref{thm:universal}). Section \ref{sec:sufficient} contains a combinatorial description of the sorting words of the longest element of finite Coxeter groups (Theorem~\ref{thm:charac_phi}), and an alternative definition of multi-cluster complexes in terms of the strong intervening neighbors property (Theorem~\ref{thm:if_part_conj}).
In Section~\ref{sec:cyclic action}, we connect multi-cluster complexes to Auslander-Reiten quivers and repetition quivers, and use this connection to introduce an action on vertices and facets of multi-cluster complexes generalizing the natural rotation action on multi-triangulations in type~$A$.
Finally, in Section~\ref{sec:open problems}, we discuss open problems and questions arising in the context of multi-cluster complexes.

\vspace*{10pt}

In a subsequent paper, C.~Stump and V.~Pilaud study the geometry of subword complexes and use the theory developed in the present paper to describe the connections to \emph{Coxeter-sortable elements}, and how to recover \emph{Cambrian fans}, \emph{Cambrian lattices}, and the \emph{generalized associahedra} purely in terms of subword complexes~\cite{pilaud_brick_b_2011}.

%%%%%%%%%%%%%%%%%%%%%%%%%%%%%%%%%%%%%%%%%%%%%%%%%%%%%%%%%%%%%%%%
%
%  Section: Definition & Main results
%
%%%%%%%%%%%%%%%%%%%%%%%%%%%%%%%%%%%%%%%%%%%%%%%%%%%%%%%%%%%%%%%%

\section{Definitions and main results}\label{sec:mainresults}

In this section, we review the essential notions concerning multi-triangulations, subword complexes and cluster complexes of finite 
type and present the main results. Throughout the paper, $(W,S)$ denotes a \Dfn{finite Coxeter system} of rank $n$, 
and $c$ denotes a \Dfn{Coxeter element}, i.e., the product of the generators in $S$ in some order. The smallest integer $h$ for which $c^h = \Id \in W$~is called \Dfn{Coxeter number}. Coxeter elements of $W$ are in bijection with \emph{(acyclic) orientations of the Coxeter graph} of $W$: a non-commuting pair $s,t \in S$ has the orientation $s \longrightarrow t$ if and only if  \Dfn{$s$ comes before $t$ in~$c$}, i.e., $s$ comes before $t$ in any reduced expression for $c$~\cite{shi_enumeration_1997}. In the simply-laced types $A$, $D$, and $E$, this procedure yields a \Dfn{quiver} $\Omega_c$ associated to a given Coxeter element~$c$, where by quiver we mean a directed graph without loops or two-cycles. For two examples, see Figure~\ref{fig:AR} on page~\pageref{fig:AR}. The \Dfn{length function} on $W$ is given by $\ell(w) = \min\{ r : w = a_{1} \cdots a_{r},\ a_i\in S \}$. An expression for~$w$ of minimal length is called \Dfn{reduced}. The unique \Dfn{longest element} in $W$ is denoted by $\wo$, its length is given by $\ell(\wo) = N := nh/2$. We refer the reader to~\cite{humphreys_reflection_1992} for further definitions and a detailed introduction to finite Coxeter groups. Next, we adopt some writing conventions: in order to emphasize the distinction between words and group elements, we write words in the alphabet $S$ as a sequence between brackets $(a_1,a_2,\dots,a_r)$ and use square letters such as $\mathbf{w}$ to denote them, and we write group elements as a concatenation of letters $a_1a_2\cdots a_r$ using normal script such as~$w$ to denote them.

\subsection{Multi-triangulations} \label{sec:multi-triangulations}

Let $\Dm$ be the simplicial complex with vertices being diagonals of a convex $m$-gon and faces being subsets of non-crossing diagonals. Its facets correspond to \Dfn{triangulations} (i.e., maximal subsets of diagonals which are mutually non-crossing). This simplicial complex is the boundary complex of the \Dfn{dual associahedron}~\cite{haiman_associahedron_1984,lee_associahedron_1989,gelfand_discriminants_2008,loday_realization_2004,hohlweg_realizations_2007,ceballos_many_2011},
we refer to the recent book~\cite{mueller_associahedra_2012} 
for a detailed treatment of the history of associahedra.
The complex~$\Dm$ can be generalized using a positive integer $k$ with $2k +1 \leq m$: define  a \Dfn{$(k+1)$-crossing} to be a set of $k+1$ diagonals which are pairwise crossing. A diagonal is called \Dfn{$k$-relevant} if it is contained in some $(k+1)$-crossing, that is, if there are at least $k$ vertices of the $m$-gon on each side of the diagonal. The complex $\Dmk$ is the simplicial complex of $(k+1)$-crossing free sets of $k$-relevant diagonals. Its facets are given by \Dfn{$k$-triangulations} (i.e., maximal subsets of diagonals which do not contain a $(k+1)$-crossing), without considering $k$-irrelevant diagonals. The reason for restricting the set of diagonals is that including all diagonals that are not $k$-relevant would yield the join of $\Dmk$ and an $mk$-simplex. This simplicial complex has been studied by several authors, see e.g. \cite{dress_line_2003, 
jonsson_generalized_2005, jonsson_spherical_2007, krattenthaler_growth_2006, nakamigawa_generalization_2000, rubey_increasing_2011,stump_new_2011}; an interesting recent treatment of $k$-triangulations using complexes of star polygons can be found in~\cite{pilaud_multitriangulations_2009}.
We refer to this simplicial complex as the simplicial complex of multi-triangulations.

In \cite{stump_new_2011}, the following description of $\Dmk$ is exhibited: let $\S{n+1}$ be the symmetric group generated by the $n$ simple transpositions $s_i = (i \ i+1)$ for $1 \leq i \leq n$, where $n=m-2k-1$. The $k$-relevant diagonals of a convex $m$-gon are in bijection with (positions of) letters in the word
\[
  Q= (\underbrace{s_n, \dots, s_1, \quad \cdots \quad s_n, \dots, s_1}_{k \text{ times } s_n,\ldots,s_1}, 
  \quad s_n, \dots, s_1, \quad s_n, \dots, s_2, \quad \cdots \quad s_n, s_{n-1}, \quad s_n)
\]
of length $kn+\binom{n+1}{2} = \binom{m}{2}-mk$. If the vertices of the $m$-gon are cyclically labeled by the integers from 0 to $m-1$, 
the bijection sends the $i$-th letter of $Q$ to the $i$-th $k$-relevant diagonal in lexicographic order. Under this bijection, a collection of $k$-relevant diagonals forms a facet of $\Dmk$ if and only if the complement of the corresponding subword in $Q$ forms a 
reduced expression for the permutation $[n+1,\ldots,2,1]~\in~\S{n+1}$. A similar approach which admits various possibilities for the 
word $Q$ was described in \cite{pilaud_multitriangulations_2010} in the context of sorting networks. 
We present these general bijections in the context of this paper in Section~\ref{sec:results}.
    
\begin{example}\label{ex:pentagon}
  For $m=5$ and $k=1$, we get $Q = (q_1, q_2, q_3, q_4, q_5) = (s_2,s_1,s_2,s_1,s_2)$. By cyclically labeling the vertices of the pentagon   with the integers $\{0, \dots, 4\}$, the bijection sends the (position of the) letter $q_i$ to the $i$-th entry of the list of ordered diagonals $[0,2], [0,3], [1,3], [1,4], [2,4]$. On one hand, two cyclically consecutive diagonals in the list form a triangulation of the 
  pentagon. On the other hand, the complement of two cyclically consecutive letters of $Q$ form a reduced expression for $[3,2,1] = s_1 s_2 s_1 = s_2 s_1 s_2 \in \S{3}$.
\end{example}
    
The main objective of this paper is to describe and study a natural generalization of multi-triangulations to finite Coxeter groups.

\subsection{Subword complexes}\label{sec:subword complexes}

Let $Q=(q_1,\dots,q_r)$ be a word in the generators $S$ of $W$ and let $\pi \in W$. The subword complex $\Delta(Q,\pi)$ 
was introduced by A.~Knutson and E.~Miller in order to study Gr\"obner geometry of Schubert varieties, see 
\cite[Definition 1.8.1]{knutson_grobner_2005}, and was further studied in \cite{knutson_subword_2004}. It is defined as the simplicial 
complex whose faces are given by subwords $P$ of~$Q$ for which the complement $Q \setminus P$
 contains a reduced expression of $\pi$. Note that subwords come with their embedding into $Q$; two subwords $P$ and $P'$ representing the same word are considered to be different if they involve generators at different positions within $Q$. In Example~\ref{ex:pentagon}, 
 we have seen an instance of a subword complex with $Q=(q_1,q_2,q_3,q_4,q_5) = (s_2,s_1,s_2,s_1,s_2)$ and $\pi=s_1 s_2 s_1 = s_2 s_1 s_2$. In this case, $\Delta(Q,\pi)$ has vertices $\{q_1, \dots, q_5\}$ and facets
\[
\{q_1,q_2\},\{q_2,q_3\},\{q_3,q_4\},\{q_4,q_5\},\{q_5,q_1\}.
\]
Subword complexes are known to be vertex-decomposable and hence shellable~\cite[Theorem~2.5]{knutson_subword_2004}. Moreover, they are topologically spheres or balls depending on the Demazure product of~$Q$.
Let $Q'$ be the word obtained by adding $s \in S$ at the end of a word $Q$. The \Dfn{Demazure product}~$\delta(Q')$ is recursively defined by
\[
\delta(Q') =
\begin{cases}
  \mu s &\text{ if } \ell(\mu s) > \ell(\mu),\\
  \mu &\text{ if } \ell(\mu s) < \ell(\mu),\\
\end{cases}
\]
where $\mu = \delta(Q)$ is the Demazure product of $Q$, and where the Demazure product of the empty word is defined to be the identity element 
in $W$. A subword complex $\Delta(Q,\pi)$ is a sphere if and only if $\delta(Q) = \pi$, and a ball otherwise \cite[Corollary~3.8]{knutson_subword_2004}.    

\subsection{Cluster complexes} \label{sec:cluster complexes}

In \cite{fomin_y-systems_2003}, S.~Fomin and A.~Zelevinsky introduced \Dfn{cluster complexes} associated to finite crystallographic 
root systems. This simplicial complex along with the \Dfn{generalized associahedron} has become the object of intensive studies and  generalizations in various contexts in mathematics, see for instance \cite{chapoton_polytopal_2002, marsh_generalized_2003, reading_clusters_2007,hohlweg_permutahedra_2011}. 
A generator $s \in S$ is called \Dfn{initial} or \Dfn{final} in a Coxeter element $c$ if $\ell(sc) < \ell(c)$ or $\ell(cs) < \ell(c)$, respectively. 
The group~$W$ acts naturally on the real vector space $V$ with basis $\Delta = \{ \alpha_s : s \in S \}$, whose elements are called 
\Dfn{simple roots}.
Let $\Phi$ denote a \Dfn{root system} for $W$, and let $\Phi^+ \subseteq \Phi$ be the set of \Dfn{positive roots} for the simple system~$\Delta$.
Furthermore, let $\Phipm = \Phi^+ \cup -\Delta$ be the set of \Dfn{almost positive roots}.
We denote by $W_{\langle s\rangle}$ the maximal standard parabolic subgroup generated by $S \setminus \{s\}$, and by $\Phi_{\langle s \rangle}$ the associated 
subroot system. For $s \in S$, the involution $\sigma_s : \Phipm \longrightarrow \Phipm$ is given by
\[
  \sigma_s(\beta) =
  \begin{cases}
    \beta &\text{ if } -\beta \in \Delta \setminus \{ \alpha_s \}, \\
    s(\beta) &\text{ otherwise}.\\
  \end{cases}
\]
N.~Reading showed that the definition of cluster complexes can be extended to all finite root systems and enriched with a parameter $c$ being a Coxeter element~\cite{reading_clusters_2007}.
These $c$-cluster complexes are defined using a family~$\parallel_c$ of $c$-compatibility 
relations on~$\Phipm$, see~\cite[Section~5]{reading_sortable_2011}. This family is characterized by the following two properties:
\begin{enumerate}[(i)]
  \item for $s \in S$ and $\beta \in \Phipm$,
    \[-\alpha_s \parallel_c \beta \Longleftrightarrow \beta \in \left(\Phi_{\langle s \rangle}\right)_{\geq -1},\]
  \item for $\beta_1, \beta_2 \in \Phipm$ and $s$ being initial in $c$,
    \[\beta_1 \parallel_c \beta_2 \Longleftrightarrow \sigma_s(\beta_1) \parallel_{scs} \sigma_s(\beta_2).\]
\end{enumerate}    
A maximal subset of pairwise $c$-compatible almost positive roots is called $c$\Dfn{-cluster}. The \Dfn{$c$-cluster complex} 
is the simplicial complex whose vertices are the almost positive roots and whose facets are $c$-clusters. It turns out that all $c$-cluster complexes for the various Coxeter elements are isomorphic, see \cite[Proposition 4.10]{marsh_generalized_2003} and \cite[Proposition 7.2]{reading_clusters_2007}. In crystallographic types, they are moreover isomorphic to the cluster complex as defined in \cite{fomin_y-systems_2003}.

\subsection{Main results}\label{sec:results}

We are now in the position to state the main results of this paper and to define the central object, the \emph{multi-cluster complex}. Let $\c = (c_1, \ldots, c_n)$ be a reduced expression for a Coxeter element $c \in W$, and let $\woc = (w_1, \dots, w_N)$ be the lexicographically first subword of $\c^{\infty}$ which represents a reduced expression for the longest element $\wo \in W$. The word $\woc$ is called \Dfn{$c$-sorting word} for $\wo$, see~\cite{reading_clusters_2007}. The first theorem (proved in Section~\ref{sec:maintheorem}) gives a description of the cluster complex as a subword complex.
            
\begin{theorem}\label{th:main theorem}
  The subword complex $\Delta(\cwoc,\wo)$ is isomorphic to the $c$-cluster complex.   The isomorphism is given by 
  sending the letter $c_i$ of $\c$ to the negative root $-\alpha_{c_i}$,   and the letter $w_i$ of~$\woc$ to the positive 
  root $w_{1} \cdots w_{i-1}(\alpha_{w_i})$.
\end{theorem}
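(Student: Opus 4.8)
The plan is to first check that the proposed map $\phi$ is a bijection on vertices, then to reduce the statement to a correspondence of facets, and finally to establish that correspondence by induction on the rank $n$, matching the defining recursion (i)--(ii) of the $c$-compatibility relation $\parallel_c$ against two structural operations on subword complexes.

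For the vertex bijection, note that $(c_1,\dots,c_n)$ is a permutation of $S$, so the first $n$ letters are sent bijectively onto $-\Delta$. For the remaining letters, recall that $w_1\cdots w_N=\wo$ is reduced; since $\wo$ maps every positive root to a negative one, the roots $w_1\cdots w_{i-1}(\alpha_{w_i})$ for $1\le i\le N$ are precisely the inversions of $\wo$ and hence run without repetition over all of $\Phi^+$. Thus $\phi$ identifies the $n+N$ positions of $\cwoc$ with $\Phipm=\Phi^+\cup(-\Delta)$. Both complexes in question are pure: the $c$-clusters all have size $n$, and, because $\delta(\cwoc)=\wo$, every facet $P$ of $\Delta(\cwoc,\wo)$ satisfies $|P|=(n+N)-\ell(\wo)=n$. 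Since a simplicial complex is the downward closure of its facets and $\phi$ is a vertex bijection, it therefore suffices to prove that $\phi$ restricts to a bijection between the facets of $\Delta(\cwoc,\wo)$ and the $c$-clusters.

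The two recursive ingredients are as follows; let $s$ be initial in $c$, so that $c_1=s$ and $w_1=s$. First, I would invoke the rotation isomorphism of Section~\ref{sec:spherical subword complexes}: moving the initial letter $c_1=s$ of $\cwoc$ to the end (conjugating it to $\wo s\wo$) produces the word $(scs)\,\woword(scs)$ together with an isomorphism $\Delta(\cwoc,\wo)\cong\Delta\big((scs)\,\woword(scs),\wo\big)$. A direct computation of the root labels shows that, under this cyclic shift of positions, the label of each position is transformed exactly by $\sigma_s$; this is the subword-complex incarnation of property~(ii). Second, the link of the vertex $\phi^{-1}(-\alpha_s)$, that is, of position $1$, is the subword complex $\Delta\big((c_2,\dots,c_n,s,w_2,\dots,w_N),\wo\big)$. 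After discarding the positions that are forced to lie in every reduced word for $\wo$ (the non-vertices of this complex) and applying the commutation isomorphisms of Section~\ref{sec:spherical subword complexes}, this link should be recognized as the standard subword complex for the longest element $\wo^{\langle s\rangle}$ of the parabolic subgroup $W_{\langle s\rangle}$, with root labels lying in $(\Phi_{\langle s\rangle})_{\ge -1}$. This realizes property~(i), which states that $-\alpha_s$ is $c$-compatible exactly with the almost positive roots of $\Phi_{\langle s\rangle}$.

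With these ingredients in hand, the induction runs as follows: facets of $\Delta(\cwoc,\wo)$ containing position $1$ correspond, via the link computation and the induction hypothesis applied to $W_{\langle s\rangle}$, to $c$-clusters containing $-\alpha_s$, whereas facets not containing position $1$ are carried by the rotation isomorphism to the corresponding statement for the Coxeter element $scs$, where $\sigma_s$ translates them into the desired clusters. The main obstacle, and the technical heart of the argument, is the bookkeeping that makes the link computation precise: one must control how the $c$-sorting word $\woc$ of $\wo$ degenerates, after deletion of its forced letters, to a sorting word of $\wo^{\langle s\rangle}$ in $W_{\langle s\rangle}$, and verify that the label $w_1\cdots w_{i-1}(\alpha_{w_i})$ restricts compatibly to the parabolic root system. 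A second delicate point is the termination of the recursion: since property~(ii) relates $c$ only to $scs$ without lowering the rank, one must organize the two moves into a genuinely well-founded recursion, as in Reading's recursive definition of $\parallel_c$, using the rank-reducing move~(i) to anchor it.
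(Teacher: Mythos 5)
Your plan reproduces the paper's architecture: the paper proves precisely your two ingredients, namely that the face relation of $\Delta(\cwoc,\wo)$ satisfies the initial condition~(i) via a parabolic/link reduction (Theorem~\ref{th:1}, built from Propositions~\ref{prop:11} and~\ref{prop:c_i_and_q} and Lemma~\ref{lem:parabolic}) and that rotation transforms the root labels exactly by $\sigma_s$ (Theorem~\ref{th:2}, using Proposition~\ref{prop:init_letter}), and then concludes because (i)--(ii) \emph{characterize} the family $\parallel_c$. Your rotation computation is correct, and your worry about termination is benign: each rotation strictly lowers the minimal position occupied by a facet until the facet contains position~$1$, where the rank-reducing move applies; this is just the uniqueness argument underlying the characterization the paper cites. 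Note, however, that your facet-level induction needs one ingredient the paper's pairwise formulation avoids: to apply the induction hypothesis to $W_{\langle s\rangle}$ you must know that $\parallel_c$ restricted to $(\Phi_{\langle s\rangle})_{\geq -1}$ coincides with $\parallel_{c'}$; property~(i) alone only identifies \emph{which} roots can accompany $-\alpha_s$, not the induced compatibility among them. This is a known result of Reading, but it must be invoked explicitly.

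The genuine gap is in the link identification, where your proposed mechanism fails as stated. You suggest that after discarding the forced letters (non-vertices) of $\widehat Q=(c_2,\dots,c_n,s,w_2,\dots,w_N)$ and applying commutations one recognizes the word $\mathbf{c'}\woword(\mathbf{c'})$ of the parabolic. But the surviving letters need not even lie in $S\setminus\{s\}$, so no sequence of commutations can produce the parabolic word. Concretely, in type $A_2$ with $c=s_1s_2$ one has $Q=\cwoc=(s_1,s_2,s_1,s_2,s_1)$; deleting the initial $s_1$ gives $\widehat Q=(s_2,s_1,s_2,s_1)$, whose vertices are the first and last positions, carrying the letters $s_2$ \emph{and} $s_1$, while the parabolic word is $(s_2,s_2)$. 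The letter $s_1$ at the last position must be relabelled to $s_2$ because its root is $\alpha_2$ -- the correspondence is through root labels, not through word moves. This is exactly why the paper's Lemma~\ref{lem:parabolic} constructs the isomorphism $\varphi$ abstractly: it matches letters by their roots, proves the root functions agree on the initial facet $\mathbf{c'}$, propagates the equality $(\star)$ along flips using Lemmas~\ref{le:uniquevertex} and~\ref{le:flip} together with Reading's result that the roots of $(\Phi_{\langle s\rangle})_{\geq-1}$ occur in the same order in both words, and concludes via flip-connectedness of facets. You correctly located this step as the technical heart, but the resolution you sketch (deletion of forced letters plus commutations) would not go through; the root-function machinery is indispensable here.
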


As an equivalent statement, we obtain the following explicit description of the $c$-compatibility relation.

\begin{corollary}\label{cor:maintheorem}
  A subset $C$ of $\Phipm$ is a $c$-cluster if and only if the complement of the corresponding   subword in $\cwoc= 
  (c_1, \ldots, c_n, w_1, \dots, w_N)$ represents a reduced expression for $\wo$. 
\end{corollary}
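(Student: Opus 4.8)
The plan is to deduce this directly from Theorem~\ref{th:main theorem} by transporting the notion of \emph{facet} across the stated isomorphism. By definition the $c$-clusters are exactly the facets of the $c$-cluster complex, and any isomorphism of simplicial complexes carries facets to facets. Hence, under the bijection of Theorem~\ref{th:main theorem} sending the letter $c_i$ to $-\alpha_{c_i}$ and the letter $w_i$ to $w_1\cdots w_{i-1}(\alpha_{w_i})$, the $c$-clusters correspond precisely to the facets of $\Delta(\cwoc,\wo)$. It therefore suffices to give an intrinsic description of those facets and to read it off under the correspondence between letters and almost positive roots.

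First I would recall (or re-prove) the standard facet description of a subword complex: a subword $P$ is a facet of $\Delta(Q,\pi)$ if and only if the complement $Q\setminus P$ is a reduced expression for $\pi$, rather than merely containing one. This follows from the definition together with a length count. If $Q\setminus P$ properly contained a reduced word $R$ for $\pi$, then any letter of $Q\setminus P$ lying outside $R$ could be moved into $P$ while keeping $R$ in the complement, contradicting the maximality of $P$; conversely, if $Q\setminus P$ is already reduced for $\pi$ then it has length $\ell(\pi)=N$, so deleting any further letter would leave a complement too short to contain a reduced word for $\pi$, forcing $P$ to be maximal.

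Applying this with $Q=\cwoc$ and $\pi=\wo$ yields the corollary. Here one should observe that $\woc$ is itself a reduced expression for $\wo$, so $\cwoc$ contains one and facets exist; equivalently $\delta(\cwoc)=\wo$ and the complex is a sphere by the Demazure-product criterion recalled in Section~\ref{sec:subword complexes}. Transporting the facet description through the isomorphism of Theorem~\ref{th:main theorem}, a subset $C\subseteq\Phipm$ is a $c$-cluster exactly when the complement of its corresponding subword in $\cwoc$ represents a reduced expression for $\wo$.

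I do not anticipate a serious obstacle: the corollary is essentially a reformulation of Theorem~\ref{th:main theorem}, and the only ingredient beyond it is the standard facet characterization of subword complexes. The single point that requires care is the bookkeeping of the correspondence ``subset of roots $\leftrightarrow$ subword of $\cwoc$'', so that the \emph{complement} appearing on the subword-complex side is matched with the condition on $C$, rather than inadvertently imposing the reducedness condition on the letters of $C$ themselves.
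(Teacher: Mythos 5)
Your proposal is correct and takes essentially the same route as the paper, which states Corollary~\ref{cor:maintheorem} as an immediate equivalent reformulation of Theorem~\ref{th:main theorem}: since $c$-clusters are the facets of the $c$-cluster complex and facets of $\Delta(\cwoc,\wo)$ are precisely the subwords whose complement \emph{is} (not merely contains) a reduced expression for $\wo$, the corollary follows at once. Your length-counting verification of that facet characterization is the only detail the paper leaves implicit, and you carry it out correctly.
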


This description was obtained independently by K.~Igusa and R.~Schiffler \cite{igusa_exceptional_2010} for finite crystallographic root systems in the context of cluster categories \cite[Theorem~2.5]{igusa_exceptional_2010}. They use results of W.~Crawley-Beovey and C.M.~Ringel saying that the braid group acts transitively on isomorphism classes of exceptional sequences of modules over a hereditary algebra, see~\cite[Section~2]{igusa_exceptional_2010}. K.~Igusa and R.~Schiffler then show combinatorially that the braid group acting on sequences of elements in any Coxeter group $W$ of rank $n$ acts as well transitively on all sequences of $n$ reflections whose product is a given Coxeter element~\cite[Theorem~1.4]{igusa_exceptional_2010}. They then deduce Corollary~\ref{cor:maintheorem} in crystallographic types from these two results, see \cite[Theorem~2.5]{igusa_exceptional_2010}. The present approach holds uniformly for all finite Coxeter groups, and is developed purely in the context of Coxeter group theory. We study the connections to the work of K.~Igusa and R.~Schiffler more closely in Section~\ref{sec:cyclic action}.
In the particular case of bipartite Coxeter elements, as defined in
Section~\ref{sec:ART} below, a similar description as in Corollary~\ref{cor:maintheorem} was as
well obtained by T.~Brady and C.~Watt in~\cite{BW2008} in the context
of the geometry of noncrossing partitions\footnote{We thank an
anonymous referee for pointing us to this result.}.

\begin{example}\label{ex:B2}
  Let $W$ be the Coxeter group of type $B_2$ generated by $S=\{s_1,s_2\}$ and let $c = c_1c_2 = s_1 s_2$. Then the word $\cwoc$ is $(c_1, c_2, w_1, w_2, w_3, w_4) = (s_1, s_2, s_1, s_2, s_1, s_2)$. The corresponding list of almost positive roots is 
  \[ 
    [-\alpha_1,\ -\alpha_2,\ \alpha_1,\ \alpha_1+\alpha_2,\ \alpha_1+2\alpha_2,\ \alpha_2].
  \] 
  The subword complex $\Delta(\cwoc,\wo)$ is an hexagon with facets being any two cyclically consecutive letters. The corresponding $c$-clusters are
  \[
    \{-\alpha_1,-\alpha_2\},\{-\alpha_2,\alpha_1\},\{\alpha_1,\alpha_1+\alpha_2\},\{\alpha_1+\alpha_2,\alpha_1+2\alpha_2\},\{\alpha_1+2\alpha_2,\alpha_2\},\{\alpha_2,-\alpha_1\}.
  \]
\end{example}

Inspired by results in \cite{stump_new_2011} and \cite{pilaud_multitriangulations_2010}, we generalize the subword complex in 
Theorem~\ref{th:main theorem} by considering the concatenation of $k$ copies of the word $\c$. In type~$A$, 
this generalization coincides with the description of the complex $\Dmk$ given in~\cite{pilaud_multitriangulations_2010} in a different language.

\begin{definition}\label{def:multi-cluster complex}
  The \Dfn{multi-cluster complex} $\DWk$ is the subword complex $\Delta(\ckwoc, \wo)$.
\end{definition}

Multi-cluster complexes are in fact independent of the Coxeter element $c$. In particular, we reobtain that all $c$-cluster complexes are isomorphic (see Section~\ref{sec:sorting} for the proof). 

\begin{theorem}\label{theorem:independent_of_c}
  All multi-cluster complexes $\DWk$ for the various Coxeter elements are isomorphic.
\end{theorem}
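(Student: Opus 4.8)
The plan is to reduce the statement to two elementary moves relating Coxeter elements, and to track how the defining word $\ckwoc$ transforms under each, invoking the commutation and rotation isomorphisms of subword complexes developed in Section~\ref{sec:spherical subword complexes}. Recall that Coxeter elements of $W$ correspond bijectively to acyclic orientations of the Coxeter graph, and that the operation $c \mapsto scs$ for an initial letter $s$ of $c$ corresponds to reversing all edges at the source $s$, turning it into a sink. Since any two acyclic orientations of a connected graph are linked by a sequence of such source-to-sink reversals, and since reduced words for a fixed $c$ differ only by commutations of adjacent commuting generators, it suffices to prove: (a) that $\DWk$ is unchanged up to isomorphism when the reduced word $\c$ for a fixed $c$ is altered by a commutation; and (b) that the complexes for $c$ and for $c' = scs$, with $s$ initial in $c$, are isomorphic.

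Part (a) is immediate from the commutation isomorphism: swapping two adjacent commuting letters inside a copy of $\c$ (and correspondingly inside $\woc$) changes neither the Demazure product nor the collection of reduced subwords, so it induces an isomorphism of $\Delta(\ckwoc,\wo)$ that merely relabels the two affected vertices. In particular this shows the multi-cluster complex is well defined independently of the reduced word chosen for $c$.

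For part (b), the heart of the argument, write $\c = (s, c_2, \dots, c_n)$ with $s$ initial, and let $\c' = (c_2, \dots, c_n, s)$ be the corresponding reduced word for $c' = scs$. First I would establish the key identity on sorting words: since $\ell(s\wo) < \ell(\wo)$ and $\c^\infty$ begins with $s$, the lexicographically first reduced subword $\woc$ begins with $s$, say $\woc = (s)\cdot \mathbf{u}$; the recursive structure of $c$-sorting words then yields $\woword(\c') = \mathbf{u}\cdot(\psi(s))$, where $\psi(s) = \wo s \wo \in S$ is the relabeling of $s$ by conjugation with $\wo$ (note $(\mathbf{u})(\psi(s))$ evaluates to $\wo$ and has length $N$, hence is reduced). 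Granting this identity, a direct regrouping shows that moving the first letter of $\ckwoc = (s, c_2,\dots,c_n)^k\cdot(s)\mathbf{u}$ to the end and conjugating it to $\psi(s)$ produces $(c_2,\dots,c_n,s)^k\cdot \mathbf{u}\,(\psi(s)) = (\c')^k\,\woword(\c')$, which is exactly the defining word of the multi-cluster complex $\Dk{W}$ for $c'$. Because $\ckwoc$ contains the reduced word $\woc$ for $\wo$, its Demazure product is $\wo$, so the subword complex is a sphere and the rotation isomorphism of Section~\ref{sec:spherical subword complexes} applies, giving $\Delta(\ckwoc,\wo)\cong \Delta((\c')^k\woword(\c'),\wo)$.

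Composing (a) and (b) along a path of source-to-sink reversals and commutations connecting any two Coxeter elements then yields the theorem; restricting to $k=1$ recovers, as a byproduct, the known fact that all $c$-cluster complexes are isomorphic. I expect the main obstacle to lie in part (b), and specifically in the sorting-word identity $\woc = (s)\mathbf{u}$, $\woword(\c') = \mathbf{u}\,(\psi(s))$. Being a reduced word for $\wo$ is not enough: one must show that $\mathbf{u}\,(\psi(s))$ is genuinely the $c'$-sorting word (up to commutation), which is precisely what couples the purely combinatorial rotation of the word to Reading's recursive theory of $c$-sorting words. Pinning down the conjugated letter $\psi(s)$ that wraps around, and verifying that the regrouping reassembles exactly $k$ clean copies of $\c'$ followed by $\woword(\c')$, is the delicate bookkeeping on which the whole reduction rests.
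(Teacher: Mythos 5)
Your proposal is correct and takes essentially the same route as the paper: reduction via commutations (Proposition~\ref{pr:up to commutations}) and initial-letter conjugations $c\mapsto scs$ connecting all Coxeter elements, the sorting-word identity $\woword({\bf c'})=(w_2,\dots,w_N,\psi(s))$ up to commutations --- which is exactly the paper's Proposition~\ref{prop:init_letter}, established there from Speyer's prefix result (Lemma~\ref{le:speyer}) and Lemma~\ref{lem:prefix} --- and finally the rotation isomorphism of Proposition~\ref{pr:rotated} applied to $Q_{c'}=\rotatedword{(Q_c)}{s}$ after regrouping the $k$ copies of $\c'$. The delicate step you rightly flag (that $\mathbf{u}\,(\psi(s))$ is genuinely the $c'$-sorting word and not merely a reduced word for $\wo$) is precisely what Proposition~\ref{prop:init_letter} supplies, so your outline matches the paper's proof step for step.
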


The following two results give alternative descriptions of multi-cluster complexes.
A word $Q=(q_1,\dots, q_r)$ in $S$ has the \Dfn{intervening neighbors property}, if all non-commuting pairs $s,t \in S$ alternate within $Q$, see~\cite[Section~3]{eriksson_conjugacy_2009} and \cite[Proposition 2.1]{speyer_powers_2009}. Let $\psi:S\rightarrow S$ be the involution given by $\psi(s)=\wo^{-1} s \wo$, and extend~$\psi$ to words as $\psi(Q) = (\psi(q_1),\ldots,\psi(q_r))$. We say that $Q$ has the \Dfn{strong intervening neighbors property} (\Dfn{SIN-property}), if $Q\psi(Q)=(q_1,\dots, q_r, \psi(q_1),\dots, \psi(q_r))$ has the intervening neighbors property, and if in addition the Demazure product $\delta(Q)$ is $\wo$. Two words coincide \Dfn{up to commutations} if they can be obtained from each other by a sequence of interchanges of consecutive commuting letters. The next theorem (proved in Section~\ref{sec:sufficient}) characterizes all words that are equal to $\ckwoc$ up to commutations.

\begin{theorem}\label{thm:if_part_conj}
A word in $S$ has the SIN-property if and only if it is equal to $\ckwoc$, up to commutations, for some Coxeter element $c$ and some non-negative integer $k$. 
\end{theorem}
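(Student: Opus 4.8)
The plan is to prove the two implications separately, reducing both to a single structural identity that ties $\ckwoc$ to the alternating structure of the powers of $c$.

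For the \emph{if} direction, I would first record that the SIN-property is invariant under commutations. The intervening neighbors property is itself commutation-invariant, since swapping two adjacent commuting letters cannot alter the relative order of occurrences of any \emph{non}-commuting pair. Moreover $\psi(s)=\wo s\wo$ is an automorphism of the Coxeter graph, so it preserves commutation relations; hence a commutation in $Q$ lifts to a commutation in both halves of $Q\psi(Q)$, and the Demazure product $\delta(Q)$ is manifestly commutation-invariant as well. It therefore suffices to check that $\ckwoc$ itself has the SIN-property. The condition $\delta(\ckwoc)=\wo$ is immediate because $\woc$ is already a reduced word for the longest element. The heart of the matter is the identity
\[\ckwoc\,\psi(\ckwoc)=\c^{\,2k+h}\quad\text{up to commutations,}\]
whose length bookkeeping checks out: $|\ckwoc|=kn+N=n(k+h/2)$, so both sides have length $n(2k+h)$. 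Granting this, $\ckwoc\,\psi(\ckwoc)$ is, up to commutations, a prefix of $\c^{\infty}$, which has the intervening neighbors property by \cite{speyer_powers_2009,eriksson_conjugacy_2009}; hence so does $\ckwoc\,\psi(\ckwoc)$, establishing the SIN-property.

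For the \emph{only if} direction, suppose $Q$ has the SIN-property. Since $Q\psi(Q)$ has the intervening neighbors property, the order in which each non-commuting pair first alternates reads off a Coxeter element $c$, and by the cited characterization such a word is, up to commutations, a prefix of $\c^{\infty}$; taking the length-$|Q|$ prefix recovers $Q$ up to commutations as a prefix of $\c^{\infty}$. It then remains to invoke $\delta(Q)=\wo$ together with the combinatorial description of $c$-sorting words in Theorem~\ref{thm:charac_phi}: the maximal power of $\c$ that can be split off before a reduced word for $\wo$ begins contributes the factor $\c^{k}$, and the greedy (lexicographically first) nature of $\woc$ forces the remaining $N$ letters to be exactly the $c$-sorting word, so that $Q=\ckwoc$ up to commutations.

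The main obstacle is the core identity $\ckwoc\,\psi(\ckwoc)=\c^{\,2k+h}$ up to commutations. I expect to prove it using the rotation isomorphism from Section~\ref{sec:spherical subword complexes}: rotating the initial letter of a word with Demazure product $\wo$ reappends its $\psi$-image at the end, so $N$ successive rotations of $\woc$ sweep out precisely $\woc\,\psi(\woc)$, which should glue into one full period $\c^{h}$, the $k$ prepended copies then accounting for the extra $\c^{2k}$. The delicate point is controlling the commutation classes throughout these rotations — verifying that no length reduction occurs and that the two halves join without a defect such as a repeated letter — and this is exactly where the description in Theorem~\ref{thm:charac_phi} of how $\woc$ embeds in $\c^{\infty}$ and interacts with $\psi$ will do the decisive work.
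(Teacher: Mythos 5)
Your \emph{if} direction is sound, and your ``main obstacle'' identity is in fact true and provable exactly along the rotation lines you sketch: since $\wo$ is $c$-sortable with full support, $\woc$ begins with $\c$, and rotating through one full copy of $\c$ (Proposition~\ref{prop:init_letter} applied $n$ times, as in Proposition~\ref{prop:suffix}) gives $\woc\,\psi(\c)=\c\,\woc$ up to commutations; hence $\ckwoc\,\psi(\ckwoc)=\c^{k}\,\woc\,\psi(\c)^{k}\,\psi(\woc)=\c^{2k}\,\woc\,\psi(\woc)$, and $\woc\,\psi(\woc)=\c^{h}$ up to commutations is the second identity of Remark~\ref{rem:computation} (note $\psi(\woc)=\woword(\psi(\c))$ because $\psi$ is a diagram automorphism fixing $\wo$). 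The paper gets this direction more cheaply: it only records that $\ckwoc$ has intervening neighbors and, by Proposition~\ref{prop:suffix}, the suffix $\psi(\c)$ up to commutations, which already forces every non-commuting pair to alternate across the junction with $\psi(\ckwoc)$.

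The genuine gap is in your \emph{only if} direction. After extracting from the SIN-property that $Q$ is, up to commutations, a prefix of $\c^{\infty}$, you discard the half of the hypothesis involving $\psi(Q)$, and the two facts you actually use afterwards --- prefix of $\c^{\infty}$ and $\delta(Q)=\wo$ --- do \emph{not} imply $Q=\ckwoc$. Concretely, in type $A_2$ with $c=s_1s_2$, the word $Q=(s_1,s_2,s_1,s_2)$ is a prefix of $\c^{\infty}$ with $\delta(Q)=\wo$, but $|Q|=4\neq 2k+3=|\ckwoc|$ for any $k$; your greedy step breaks on it (the maximal power of $\c$ that can be split off before a reduced word for $\wo$ begins is $\c^{0}$, since $s_1s_2s_1$ is already reduced for $\wo$, yet the remaining four letters are not $\woc$), and Theorem~\ref{thm:charac_phi} is cited but never actually brought to bear to exclude such words. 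Of course $Q$ is not a counterexample to the theorem: $Q\,\psi(Q)=(s_1,s_2,s_1,s_2,s_2,s_1,s_2,s_1)$ fails intervening neighbors at the doubled $s_2$ --- but that junction information is precisely what your argument never exploits. The paper closes this hole by running the prefix argument \emph{twice}: once on $Q$, giving via Lemma~\ref{le:speyer} that $\woc$ is a prefix of $Q$ up to commutations, and once on $\operatorname{rev}(Q)$, giving that $\woword(\psi(\operatorname{rev}(\c)))$ is a prefix of $\operatorname{rev}(Q)$, i.e., by Remark~\ref{rem:computation}, that $\woc$ is also a \emph{suffix} of $Q$ up to commutations; the prefix $\c$, the suffix $\woc$, and intervening neighbors then pin $Q$ down to $\ckwoc$. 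You need this second, reversed application (or some equivalent use of the alternation of $Q\,\psi(Q)$ across the junction) to make your conclusion follow.
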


The following proposition gives a different description of the facets
of the multi-cluster complex. It generalizes results
in~\cite[Section~8]{BW2008} (see
also~\cite[Section~2.6]{athanasiadis_$h$-vectors_2006}), and as well
in~\cite[Lemma~3.2]{igusa_exceptional_2010}. In~\cite{BW2008}, the authors consider
the case $k=1$ with bipartite Coxeter elements (see Section~\ref{sec:ART} for
the definition). In~\cite{igusa_exceptional_2010}, the authors consider the case $k=1$
for crystallographic types with arbitrary Coxeter elements.
Set $\ckwoc=(q_1,q_2,\ldots,q_{kn+N})$. For an index $1 \leq i \leq kn+N$, set the reflection $t_i$ to be $q_1q_2\ldots q_{i-1}q_iq_{i-1}\ldots q_2q_1$. E.g., in Example~\ref{ex:B2}, we obtain the sequence
$$(t_1,t_2,t_3,t_4,t_5,t_6) = (s_1,s_1s_2s_1,s_2s_1s_2,s_2,s_1,s_1s_2s_1).$$
\begin{proposition}
  A collection $\{q_{\ell_1},\ldots,q_{\ell_{kn}}\}$ of letters in $\ckwoc$ forms a facet of $\DWk$ if and only if
  $$t_{\ell_{kn}}\cdots t_{\ell_2}t_{\ell_1} = c^k.$$
\end{proposition}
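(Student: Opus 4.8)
The plan is to reduce the statement to the defining property of facets of a subword complex, combined with a standard formula expressing the product of the \emph{complement} of a subword in terms of the reflections $t_i$. The essential arithmetic observation is that deleting a letter from a word multiplies the product on the left by the associated reflection.

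First I would recall that the facets of $\DWk = \Delta(\ckwoc,\wo)$ are exactly the subwords $P$ of $\ckwoc$ whose complement is a reduced expression for $\wo$. Since $\ckwoc$ has length $kn+N$ and $\ell(\wo)=N$, every facet has cardinality $kn$; thus the collections $\{q_{\ell_1},\dots,q_{\ell_{kn}}\}$ in the statement are precisely the candidate facets, and their complements always have length exactly $N$. This length count is the crux of the argument: a word of length $N$ whose product is $\wo$ is automatically reduced, so it suffices to control the \emph{product} of the complement rather than verify its reducedness directly, and the two implications can then be treated simultaneously.

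Second, I would establish the key formula. Writing $\ckwoc=(q_1,\dots,q_{kn+N})$, the full product is $q_1\cdots q_{kn+N}=c^k\wo$. For positions $\ell_1<\dots<\ell_{kn}$, I claim the product of the complementary letters equals $t_{\ell_1}t_{\ell_2}\cdots t_{\ell_{kn}}\cdot c^k\wo$. This follows by induction on the number of deleted letters from the single-deletion identity $q_1\cdots\widehat{q_\ell}\cdots q_{kn+N} = t_\ell\cdot(q_1\cdots q_{kn+N})$, which is immediate upon writing $t_\ell = q_1\cdots q_{\ell-1}\,q_\ell\,q_{\ell-1}\cdots q_1$ and inserting $q_\ell q_\ell = \Id$. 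The only delicate point is the order of the reflections under iterated deletion; splitting the word at the deleted positions and cancelling shows that they accumulate in increasing order of position, which is why the product reads $t_{\ell_1}\cdots t_{\ell_{kn}}$.

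Finally, I would combine the two steps. The collection is a facet if and only if its complement is a reduced word for $\wo$, if and only if (by the length count) the complement has product $\wo$, if and only if $t_{\ell_1}\cdots t_{\ell_{kn}}\cdot c^k\wo = \wo$. Cancelling $\wo$ gives $t_{\ell_1}\cdots t_{\ell_{kn}} = c^{-k}$, and inverting both sides, using that each $t_{\ell_i}$ is an involution, yields $t_{\ell_{kn}}\cdots t_{\ell_1} = c^k$, as claimed. I expect the main obstacle to be purely bookkeeping, namely getting the ordering and the final inversion of the reflection product exactly right and keeping track that the complement has length $N$; once the single-deletion identity and this length count are in place, both directions of the equivalence follow at once.
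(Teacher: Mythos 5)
Your proof is correct and takes essentially the same approach as the paper: the paper's proof rests on exactly your key identity, namely that $t_{\ell_1}\cdots t_{\ell_{kn}}\, q_1q_2\cdots q_{kn+N}$ equals the product of the complementary letters, combined with $q_1q_2\cdots q_{kn+N}=c^k\wo$ and the same final cancellation and inversion. Your single-deletion induction and the explicit length-count argument (a word of length $N=\ell(\wo)$ with product $\wo$ is automatically reduced) merely spell out what the paper compresses into a ``direct calculation'' and an implicit equivalence.
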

\begin{proof}
  The proof follows the lines of the proof of~\cite[Lemma~3.2]{igusa_exceptional_2010}. A direct calculation shows that $t_{\ell_1}\cdots t_{\ell_{kn}} q_1q_2\cdots q_{kn+N}$ equals the product of all letters in $\cwoc$ not in $\{q_{\ell_1},\ldots,q_{\ell_{kn}}\}$. We get that $\{q_{\ell_1},\ldots,q_{\ell_{kn}}\}$ is a facet of $\DWk$ if and only if $t_{\ell_1}\cdots t_{\ell_{kn}} q_1q_2\cdots q_{kn+N} = \wo$. As $q_1q_2\cdots q_{kn+N} = c^k \wo$, the statement follows.
\end{proof}

We have seen in Section~\ref{sec:multi-triangulations} that the multi-cluster complex of type $A_{m-2k-1}$ is isomorphic to the simplicial complex whose facets correspond to $k$-triangulations of a convex $m$-gon,
\[
  \Dk{A_{m-2k-1}} \cong \Delta_{m,k}.
\]
Thus, the multi-cluster complex extends the concept of multi-triangulations to finite Coxeter groups and provides a unifying approach to multi-triangulations and cluster complexes. 

\begin{remark}\label{rem:naive_multi_def}
Note that there is as well a \lq\lq naive\rq\rq\ way of extending the notion of cluster complexes. 
Consider the simplicial complex on the set of almost positive roots whose faces are given by the sets which do not contain any subset of $k+1$ pairwise not compatible roots. In type~$A$, this complex gives rise to the simplicial complex of multi-triangulations of a convex polygon as desired.
However, this simplicial complex lacks basic properties of cluster
complexes in general; in type $B_3$, it is not pure. In this case, 
the maximal faces have cardinality~$6$ or~$7$. 
A similar phenomenon was observed in \cite[Remark 29]{pilaud_multitriangulations_2010}, where the authors
suggest that subword complexes of type $A$ (viewed as pseudoline
arrangements) are the right objects to define ``multi-pseudotriangulations",
and explain that the approach using pairwise crossings does not work.
\end{remark}

The dictionary for type $A$ is presented in Table \ref{tab:typeA}.
The general bijection between $k$-relevant diagonals of the $m$-gon and (positions of) letters of the word~$Q=\ckwoc$ of type~$A_{m-2k-1}$ is given as follows. Label the vertices of the $m$-gon from~$0$ to~$m-1$ in clockwise direction, and let $n=m-2k-1$ for simplicity. For~$i \in \{ 1,2,\ldots,n\}$, denote by~$p_i$ the position of the generator~$s_i$ in~$\c$, and let 
\begin{align*}
a_i &  = \hspace{1.5cm} \big|\big\{j \in \{ 1,2,\ldots,n\}: j<i \text{ and } p_j<p_{j+1} \big\} \big| \; {\rm mod}(m),\\
b_i & = -k-1- \big|\big\{j \in \{ 1,2,\ldots,n\}: j<i \text{ and } p_j>p_{j+1} \big\} \big| \; {\rm mod}(m).
\end{align*}
The bijection sends the $\ell$-th copy of a generator~$s_i$ in~$Q$ to the $k$-relevant diagonal obtained by rotating~$\ell-1$ times the diagonal $[a_i,b_i]$ in clockwise direction. 
Under this bijection, a collection of $k$-relevant diagonals is a facet of $\Dmk$ if and only if the corresponding subword in $Q$ is a facet of~$\Dk{A_{m-2k-1}}$.

\begin{small}
\begin{table}[!htbp]
\begin{center}
\begin{tabular}{|rll|}
\hline
&&\\[-.13in]
& $\Dmk$ & $\Dk {A_{m-2k-1}}$\\[0.05in]
\hline
&&\\[-.13in]
vertices: & $k$-relevant diagonals of a convex $m$-gon & letters of $Q=\ckwoc$\\[0.05in]
facets: & maximal sets of $k$-relevant diagonals & 
$P\subset Q$ such that $Q\setminus P$ is \\
 & without $(k+1)$-crossings & a reduced expression for $\wo$\\[0.05in]
simplices: & sets of $k$-relevant diagonals  & $P\subset Q$ such that $Q\setminus P$ contains\\
 & without $(k+1)$-crossings & a reduced expression for $\wo$\\[0.05in]
ridges: & flips between two $k$-triangulations & facet flips using Lemma \ref{le:uniquevertex}\\[0.05in]
\hline
\end{tabular}
\vspace{.05in}
\end{center}
\caption{\label{tab:typeA} 
The correspondence between the concepts of diagonals, multi-triangulations and flips of multi-triangulations in $\Dmk$, and the multi-cluster complex $\Dk{A_{m-2k-1}}$.  
}
\vspace{-10pt}
\end{table}
\end{small}

Also in type $B$, we obtain a previously known object, namely the simplicial complex $\Delta_{m,k}^{sym}$ of centrally symmetric $k$-triangulations of a regular convex $2m$-gon. 
The vertices of this complex are pairs of centrally symmetric $k$-relevant diagonals, and a collection of vertices form a face if and only if the corresponding diagonals do not contain a~$(k+1)$-crossing. 
This simplicial complex was studied in algebraic and combinatorial contexts in ~\cite{soll_type-b_2009, rubey_crossings_2010}. 
We refer to Section~\ref{subsec:multi-type-B} for a proof of Theorem~\ref{th:typeB}. 
\begin{theorem} \label{th:typeB}
  The multi-cluster complex $\Dk{B_{m-k}}$ is isomorphic to the simplicial complex of centrally symmetric $k$-triangulations of a regular convex $2m$-gon.
\end{theorem}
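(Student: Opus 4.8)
The plan is to deduce the type $B$ statement from the already-established type $A$ isomorphism $\Dk{A_{m-2k-1}} \cong \Delta_{m,k}$ by a folding argument. Write $n = m-k$, so that $B_{m-k} = B_n$ and the relevant type $A$ complex for the $2m$-gon is $\Dk{A_{2m-2k-1}} = \Dk{A_{2n-1}}$. I would realize the Coxeter group of type $B_n$ as the group of centrally symmetric permutations inside $\S{2n}$, obtained by folding the Dynkin diagram of $A_{2n-1}$ along its unique nontrivial automorphism $\tau$: the central generator $s_0$ of $B_n$ maps to the middle transposition $s_n \in \S{2n}$, while each $s_i$ (for $1 \le i \le n-1$) maps to the pair of commuting transpositions $s_{n-i}s_{n+i}$. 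The two facts I will exploit are classical: a word is a reduced expression for an element of $B_n$ if and only if its image under this unfolding is a $\tau$-invariant reduced expression in $\S{2n}$, and the longest element $\wo$ of $B_n$ (the element $i \mapsto -i$) unfolds to the longest element $\wo$ of $A_{2n-1}$ (the reverse permutation $i \mapsto 2n+1-i$).

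Second, I would transport the geometry. Under the type $A$ dictionary (Table~\ref{tab:typeA}), the central symmetry $\rho$ of the $2m$-gon, i.e.\ rotation by $\pi$, acts on $k$-relevant diagonals, hence on the letters of the type $A$ multi-cluster word, and I would identify this induced action with the diagram automorphism $\tau = \psi$ (recall that in type $A$ the map $\psi(s) = \wo^{-1}s\wo$ is exactly the nontrivial diagram automorphism, $\psi(s_i) = s_{2n-i}$). Consequently the centrally symmetric $k$-triangulations of the $2m$-gon are precisely the $\tau$-invariant facets of $\Dk{A_{2n-1}}$, and, more generally, centrally symmetric $(k+1)$-crossing-free sets of $k$-relevant diagonals are the $\tau$-invariant faces. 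This reduces the theorem to producing an isomorphism between $\Dk{B_n}$ and the subcomplex of $\tau$-invariant faces of $\Dk{A_{2n-1}}$.

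Third, I would build that isomorphism from the unfolding. Fixing a Coxeter element $c$ of $B_n$, which is legitimate by Theorem~\ref{theorem:independent_of_c}, I unfold the word $\ckwoc$ to obtain a word $\widehat{Q}$ in $\S{2n}$; a direct length count ($\ckwoc$ has $mn$ letters, of which exactly $m$ equal $s_0$, so $\widehat{Q}$ has $m(2n-1)$ letters, matching the type $A$ multi-cluster word) makes it plausible that $\widehat{Q}$ is a type $A$ multi-cluster word up to commutations. I would verify this by checking that $\widehat{Q}$ has the SIN-property in type $A$, so that by Theorem~\ref{thm:if_part_conj} and the commutation invariance of subword complexes from Section~\ref{sec:spherical subword complexes} the complex $\Delta(\widehat{Q}, \wo)$ is isomorphic to $\Dk{A_{2n-1}}$. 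The folding map then sends a face $P$ of $\Dk{B_n}$, i.e.\ a subword of $\ckwoc$ whose complement contains a reduced word for $\wo$, to the $\tau$-invariant subword $\widehat{P}$ of $\widehat{Q}$: selecting a letter $s_i$ with $i>0$ corresponds to selecting its two-letter block $s_{n-i}s_{n+i}$ together, while the central letter $s_0$ corresponds to the single fixed letter $s_n$. The classical folding facts guarantee that $\widehat{Q}\setminus\widehat{P}$ contains a reduced word for $\wo$ in $\S{2n}$ exactly when $Q\setminus P$ contains one for $\wo$ in $B_n$, so this map is an inclusion-preserving bijection onto the $\tau$-invariant faces, hence a simplicial isomorphism.

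The main obstacle I anticipate is the precise matching in the third step between reducedness in $B_n$ and $\tau$-invariant reducedness in $\S{2n}$ at the level of subwords of one fixed word, together with the bookkeeping that the central generator $s_0$ (a single letter, corresponding to the self-symmetric diameters through the center) and the paired generators $s_i$ (two-letter blocks, corresponding to centrally symmetric pairs of diagonals) are handled consistently and that the vertex counts $m\cdot n$ agree on both sides. Verifying that the induced polygon symmetry $\rho$ coincides with $\psi$ in the second step, most cleanly done through the rotation isomorphisms of Section~\ref{sec:spherical subword complexes}, is a secondary technical point.
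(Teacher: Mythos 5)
Your proposal takes essentially the same route as the paper's own proof: the paper likewise embeds $B_{m-k}$ into $A_{2(m-k)-1}$ by the standard folding technique, replacing $s_0$ by $s'_{m-k}$ and each $s_i$ by the commuting pair $s'_{m-k+i}s'_{m-k-i}$, so that the unfolded multi-cluster word sits inside the type $A$ multi-cluster complex with all generators except $s'_{m-k}$ occurring in symmetric pairs, and then uses the type $A$ dictionary between facets and $k$-triangulations to identify the image with centrally symmetric multi-triangulations. The paper's write-up is just terser, asserting the compatibility of reduced expressions under folding and the matching of the polygon's central symmetry that you verify explicitly via the SIN-property and the identification of the half-turn with $\psi$.
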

The description of the simplicial complex of centrally symmetric multi-triangulations as a subword complex provides straightforward proofs of non-trivial results about centrally symmetric multi-triangulations.

\begin{corollary}\label{cor:sym_multi}
The following properties of centrally symmetric multi-triangulations of a regular convex $2m$-gon hold.
\begin{enumerate}[(i)]
  \item All centrally symmetric $k$-triangulations of a regular convex $2m$-gon contain exactly $mk$ relevant (centrally) symmetric pairs of diagonals, of which $k$ are diameters.
  \item For any centrally symmetric $k$-triangulation $T$ and any $k$-relevant symmetric pair of diagonals $d\in T$ , there exists a unique $k$-relevant symmetric pair of diagonals $d'$ not in $T$ such that $T'=(T\backslash \{d\}) \cup \{d'\}$ is again a centrally symmetric $k$-triangulation. The operation of interchanging a symmetric pair of diagonals between $T$ and $T'$ is called \Dfn{symmetric flip}.
  \item All centrally symmetric $k$-triangulations of a $2m$-gon are connected by symmetric flips.
\end{enumerate}
\end{corollary}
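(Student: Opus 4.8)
The plan is to transport all three statements across the isomorphism of Theorem~\ref{th:typeB}, which identifies the complex of centrally symmetric $k$-triangulations of the $2m$-gon with the multi-cluster complex $\Dk{B_{m-k}} = \Delta(\ckwoc,\wo)$. Under this isomorphism a symmetric pair of $k$-relevant diagonals is a single vertex (letter) of $\ckwoc$, a centrally symmetric $k$-triangulation is a facet, and a symmetric flip is the exchange of one vertex of a facet. Each assertion thus becomes a structural statement about this subword complex that can be read off from the general theory of Knutson and Miller.

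For~(i), I would first invoke purity: subword complexes are vertex-decomposable, hence pure, so all facets of $\Dk{B_{m-k}}$ share a common size $f$, equal to the number of letters of $\ckwoc$ minus $\ell(\wo)$. Translating back, every centrally symmetric $k$-triangulation has exactly $f$ symmetric pairs of $k$-relevant diagonals, and a direct computation of $f$ (using $\ell(\wo)=(m-k)^2$ in type $B_{m-k}$) evaluates this number. To count diameters I would set up two linear relations for a fixed centrally symmetric $k$-triangulation $T$: writing $D$ for its number of diameters and $P'$ for its number of non-diameter symmetric pairs, the facet count gives $D+P' = f$, while regarding $T$ merely as a $k$-triangulation of the $2m$-gon and invoking purity of the type-$A$ complex $\Delta_{2m,k}$ gives $D+2P' = k(2m-2k-1)$, the common number of $k$-relevant diagonals of any such $k$-triangulation. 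Eliminating $P'$ then pins the number of diameters to $k$. As a consistency check, any two diameters cross at the center, so the absence of a $(k+1)$-crossing already forces $D\le k$.

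Statements~(ii) and~(iii) require no computation and follow from two standard features of \emph{spherical} subword complexes. Since $\ckwoc$ contains the reduced word $\woc$ for $\wo$, its Demazure product is $\wo$, and hence $\Dk{B_{m-k}}$ is a sphere by the criterion recalled in Section~\ref{sec:subword complexes}. For~(ii) I would apply the unique-flip property, Lemma~\ref{le:uniquevertex}: for every facet $F$ and every vertex $v\in F$ there is a unique vertex $v'\notin F$ with $(F\setminus\{v\})\cup\{v'\}$ again a facet. As a vertex is exactly a symmetric pair of diagonals, this is precisely the existence and uniqueness of the symmetric flip. For~(iii) I would use that a simplicial sphere is a pseudomanifold whose dual (flip) graph is connected -- equivalently, shellability orders the facets so that consecutive ones differ by a single flip -- and transport this connectivity back through Theorem~\ref{th:typeB}.

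The main obstacle is the bookkeeping in~(i): purity is immediate, but extracting the exact number of diameters hinges on correctly juggling the three parameters (the rank $m-k$, the multiplicity $k$, and the polygon size $2m$) and on having \emph{both} purity statements -- one for type $B_{m-k}$ and one for the auxiliary type $A_{2m-2k-1}$ -- in play at once. By contrast, (ii) and (iii) come essentially for free once the sphericity of $\Dk{B_{m-k}}$ is recorded.
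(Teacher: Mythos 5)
Your overall route is exactly the paper's: the paper offers no separate argument for this corollary beyond the remark that the subword complex description of Theorem~\ref{th:typeB} makes it straightforward, and the three ingredients you name --- purity of subword complexes (every facet of $\Delta(Q,\wo)$ has cardinality equal to the number of letters of $Q$ minus $\ell(\wo)$) for (i), the unique-exchange Lemma~\ref{le:uniquevertex} for (ii), and flip-connectivity of the spherical subword complex (via vertex-decomposability/shellability) for (iii) --- are precisely the intended ones, as recorded in Table~\ref{tab:typeB}. Parts (ii) and (iii) of your proposal are correct as written, with one small caution: consecutive facets in a shelling need not differ by a single flip; what shellability (or the pseudomanifold property) gives is that each facet is flip-adjacent to \emph{some} earlier facet, which is all that connectivity requires.

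In (i), however, you stop short of the arithmetic, and this hides a real discrepancy: carrying out your own computation gives $f = (m-k)m - (m-k)^2 = k(m-k)$, since in type $B_{m-k}$ one has $\ckwoc = \c^m$ with $(m-k)m$ letters and $\ell(\wo)=(m-k)^2$. This is \emph{not} the $mk$ asserted in the statement, and $k(m-k)$ is the correct value: in Example~\ref{ex:B3} ($m=5$, $k=2$) the displayed facet has $6 = k(m-k)$ symmetric pairs, not $mk=10$, and for $k=1$ a centrally symmetric triangulation of a $2m$-gon has $m-1$ symmetric pairs, not $m$. Your own elimination confirms this: from $D+P'=f$ and $D+2P'=k(2m-2k-1)$ one gets $D = 2f - k(2m-2k-1)$, which equals $k$ exactly when $f=k(m-k)$, whereas $f=mk$ would force $D=k(2k+1)$. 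So your method proves the corollary with $mk$ read as $k(m-k)$, and a complete write-up must flag this rather than claim the computation ``evaluates this number.'' Two further remarks: your second linear relation uses that a maximal symmetric $(k+1)$-crossing-free set is maximal among \emph{all} diagonal sets, i.e.\ a genuine $k$-triangulation of the $2m$-gon; this is supplied by the folding in the proof of Theorem~\ref{th:typeB}, which sends facets of $\Dk{B_{m-k}}$ to facets of $\Delta^{k}_{c'}(A_{2(m-k)-1})$, and should be cited rather than assumed. Finally, the diameter count admits a slicker argument in this setting: the generator $s_1$ takes part only in commutations and in the relation $s_1s_2s_1s_2=s_2s_1s_2s_1$, both of which preserve its multiplicity, so every reduced word for $\wo$ in type $B_{m-k}$ contains $s_1$ exactly $m-k$ times; since $\c^m$ contains it $m$ times, every facet contains exactly $k$ letters $s_1$, i.e.\ exactly $k$ diameters, with no type-$A$ double count needed.
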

The dictionary between the type $B$ multi-cluster complex and the simplicial complex of centrally symmetric $k$-triangulations of a regular convex $2m$-gon is presented in Table~\ref{tab:typeB}.
\begin{small}
\begin{table}[!htbp]
\begin{center}
\begin{tabular}{|rll|}
\hline
&&\\[-.13in]
& $\Delta_{m,k}^{sym}$ & $\Dk{B_{m-k}}$\\[0.05in]
\hline
&&\\[-.13in]
vertices: & $k$-relevant symmetric pairs of & letters of $Q=\ckwoc=\c^m$\\
& diagonals of a regular convex $2m$-gon & \\[0.05in]
facets: & maximal sets of $k$-relevant centrally symmetric & $P\subset Q$ such that $Q\setminus P$ is\\
 & diagonals without $(k+1)$-crossings & a reduced expression for $\wo$\\[0.05in]
simplices: & sets of $k$-relevant symmetric pairs of & $P\subset Q$ such that $Q\setminus P$ contains\\
 & diagonals without $(k+1)$-crossings & a reduced expression for $\wo$\\[0.05in]
ridges: & symmetric flips between two centrally symmetric & facet flips using Lemma \ref{le:uniquevertex}\\
& $k$-triangulations  &\\[0.05in]
\hline
\end{tabular}
\vspace{.05in}
\end{center}
\caption{\label{tab:typeB} The generalization of the concept of diagonals, multi-triangulations and flips of multi-triangulations to the Coxeter group of type $B_n$.}
\vspace{-10pt}
\end{table}
\end{small}

The bijection between $k$-relevant symmetric pairs of diagonals of a regular convex $2m$-gon and (positions of) letters of the word~$Q=\ckwoc= \c^m$ of type~$B_{n}$, where $n=m-k$ and $(s_1s_2)^4=(s_is_{i+1})^3={\bf 1}$ for $1<i<n$, is given as follows. Label the vertices of the $2m$-gon from~$0$ to~$2m-1$ in clockwise direction. For~$i \in[n]$, denote by~$p_i$ the position of the generator~$s_i$ in~$\c$, and let 
\begin{align*}
a_i &  = \hspace{0.74cm} |\{j \in [n]: j<i \text{ and } p_j<p_{j+1} \}|,\\
b_i & = m - |\{j \in [n]: j<i \text{ and } p_j>p_{j+1}\}|.
\end{align*}
The bijection sends the $\ell$-th copy of a generator~$s_i$ in~$Q$ to the $k$-relevant symmetric pair of diagonals obtained by rotating~$\ell-1$ times the symmetric pair 
$\symdiag{a_i,b_i} :=\big\{[a_i,b_i],[a_i+m,b_i+m]\big\}$ in clockwise direction (observe that both diagonals coincide for $i=1$). 
Under this bijection, a collection of $k$-relevant symmetric pairs of diagonals is a facet of $\Delta_{m,k}^{sym}$ if and only if the corresponding subword in $Q$ is a facet of~$\Dk{B_{m-k}}$.

\begin{example}\label{ex:B3}
Let $m=5$ and $k=2$, and let $W$ be the Coxeter group of type $B_3$ generated by $S=\{s_1,s_2,s_3\}$ where $(s_1s_2)^4=(s_2s_3)^3=(s_1s_3)^2=\Id$. The multi-cluster complex $\Delta_c^2(B_3)$ is isomorphic to the simplicial complex of centrally symmetric $2$-triangulations of a regular convex $10$-gon. In the particular case where the Coxeter element $c=c_1c_2c_3=s_1s_2s_3$, the bijection between $2$-relevant symmetric pairs and the letters of the word $Q=\c^2\woc=(s_1,s_2,s_3)^{5}$ is given by
\begin{scriptsize}
  $$
    \begin{array}{cccccccccccccccccccc}
      s_1 & s_2 & s_3 & s_1 & s_2 & s_3 & s_1 & s_2 & s_3 & s_1 & s_2 & s_3 & s_1 & s_2 & s_3  \\[5pt]
      {[0,5]} & {[1,5]} & {[2,5]} & {[1,6]} & {[2,6]} & {[3,6]} & {[2,7]} & {[3,7]} & {[4,7]} & {[3,8]} & {[4,8]} & {[5,8]} & {[4,9]} & {[5,9]} & {[6,9]} \\
      & {[6,0]} & {[7,0]} & & {[7,1]} & {[8,1]} & & {[8,2]} & {[9,2]} & & {[9,3]} & {[0,3]} & & {[0,4]} & {[1,4]}
    \end{array}.
  $$
\end{scriptsize}
For instance, the first appearance of the letter $s_3$ is mapped to the symmetric pair of diagonals $\symdiag{2,5} = \big\{[2,5],[7,0]\big\}$, while the third appearance of $s_1$ is mapped to the symmetric pair of diagonals $\symdiag{2,7} = \big\{[2,7]\big\}$. The centrally symmetric $k$-triangulations can be easily described using the subword complex approach. For example, the symmetric pairs of diagonals at positions $\{3,5,7,9,13,15\}$ form a facet of $\Delta_{m,k}^{sym}$, and the symmetric flips are interpreted using Lemma~\ref{le:uniquevertex}.
\end{example}

Using algebraic techniques, D.~Soll and V.~Welker proved that $\Delta_{m,k}^{sym}$ is a (mod~$2$)-homology-sphere \cite[Theorem~10]{soll_type-b_2009}. 
The subword complex description in 
Theorem \ref{th:typeB} and the results by A.~Knutson and E.~Miller~\cite[Theorem 2.5 and Corollary 3.8]{knutson_subword_2004} imply the following stronger result.
\begin{corollary}\label{cor:typeBvdecomp}
The simplicial complex of centrally symmetric k-triangulations of a regular convex $2m$-gon is a vertex-decomposable simplicial sphere.
\end{corollary}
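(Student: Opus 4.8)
The plan is to move the entire statement into the setting of subword complexes, where the two structural theorems of Knutson and Miller recalled in Section~\ref{sec:subword complexes} apply directly. First I would invoke Theorem~\ref{th:typeB} to identify the simplicial complex of centrally symmetric $k$-triangulations of a regular convex $2m$-gon with the multi-cluster complex $\Dk{B_{m-k}}$, which by Definition~\ref{def:multi-cluster complex} is exactly the subword complex $\Delta(\ckwoc,\wo)$. Everything then reduces to proving that this one subword complex is a vertex-decomposable sphere.

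Vertex-decomposability requires no work beyond citation: by~\cite[Theorem~2.5]{knutson_subword_2004}, every subword complex $\Delta(Q,\pi)$ is vertex-decomposable, and hence shellable, with no hypothesis on $Q$ or $\pi$. This is already strictly stronger than the (mod~$2$)-homology-sphere conclusion of Soll and Welker, so the only remaining task is to distinguish a sphere from a ball.

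For this I would apply~\cite[Corollary~3.8]{knutson_subword_2004}, according to which $\Delta(Q,\pi)$ is a sphere if and only if the Demazure product $\delta(Q)$ equals $\pi$, and a ball otherwise. Here $Q=\ckwoc=\c^k\woc$ and $\pi=\wo$, so the only thing to verify is $\delta(\c^k\woc)=\wo$. The key observation is that $\woc$ is the $c$-sorting word of $\wo$, hence a reduced expression for $\wo$, and it occurs as the suffix of $\c^k\woc$ (the complement of the prefix $\c^k$). Thus $\c^k\woc$ contains a reduced expression for $\wo$ as a subword, which forces $\wo\leq\delta(\c^k\woc)$ in Bruhat order. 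Since $\wo$ is the top element of $W$, this inequality must be an equality, yielding $\delta(\c^k\woc)=\wo$ and therefore the sphere condition.

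The proof is short precisely because the substantive content has been absorbed into the earlier results: the geometric realization as centrally symmetric $k$-triangulations is supplied by Theorem~\ref{th:typeB}, while vertex-decomposability and the sphere/ball dichotomy come from~\cite{knutson_subword_2004}. The only genuine, and rather minor, point of care is the Demazure computation, where one must remember that the $c$-sorting word is reduced and that $\wo$ is the maximum in Bruhat order; I do not expect any step to present a serious obstacle.
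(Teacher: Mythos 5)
Your proof is correct and takes essentially the same route as the paper, which deduces the corollary directly from Theorem~\ref{th:typeB} together with Knutson--Miller's Theorem~2.5 (vertex-decomposability) and Corollary~3.8 (the sphere/ball dichotomy via the Demazure product). Your explicit check that $\delta(\c^k\woc)=\wo$ --- using that the suffix $\woc$ is a reduced expression for $\wo$ and that $\wo$ is maximal in Bruhat order --- is the one detail the paper leaves implicit, and you carry it out correctly.
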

This result together with the proof of \cite[Conjecture 13]{soll_type-b_2009} given in \cite{rubey_crossings_2010}\footnote{The proof appeared in Section 7 in the arxiv version, see {\tt http://arxiv.org/abs/0904.1097v2}.} implies the following conjecture by D.~Soll and V.~Welker.
\begin{corollary}[{\cite[Conjecture 17]{soll_type-b_2009}}]\label{cor:conj17}
For the term-order $\preceq$ defined in \cite[Section~7]{soll_type-b_2009}, the initial ideal $\operatorname{in}_\preceq(I_{n,k})$ of the determinantal ideal $I_{n,k}$ defined in \cite[Section~3]{soll_type-b_2009} is spherical.
\end{corollary}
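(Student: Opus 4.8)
The plan is to deduce Conjecture~17 of~\cite{soll_type-b_2009} directly from Corollary~\ref{cor:typeBvdecomp} together with the resolution of their Conjecture~13 in~\cite{rubey_crossings_2010}, so that the proof is essentially an assembly of these two inputs rather than a new computation. The whole point is that the subword complex description has already done the topological work; what remains is to transport it through the algebraic dictionary of Soll and Welker.

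First I would make precise what \emph{spherical} means in the sense of~\cite{soll_type-b_2009}. The determinantal ideal $I_{n,k}$ lives in a polynomial ring whose variables are indexed by the $k$-relevant (symmetric pairs of) diagonals of a regular convex $2m$-gon, and with respect to the term order $\preceq$ the initial ideal $\operatorname{in}_\preceq(I_{n,k})$ is a squarefree monomial ideal. Such an ideal is the Stanley--Reisner ideal of a unique simplicial complex, and it is called spherical precisely when this Stanley--Reisner complex is a simplicial sphere. Thus the assertion to be proved reduces to showing that the Stanley--Reisner complex of $\operatorname{in}_\preceq(I_{n,k})$ is a sphere.

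Second, I would invoke the proof of~\cite[Conjecture~13]{soll_type-b_2009} given in~\cite{rubey_crossings_2010}. This identifies the minimal monomial generators of $\operatorname{in}_\preceq(I_{n,k})$ with the $(k+1)$-crossings of the $2m$-gon, so that the minimal non-faces of its Stanley--Reisner complex are exactly the $(k+1)$-crossings. Consequently the faces of that complex are precisely the $(k+1)$-crossing-free sets of $k$-relevant symmetric pairs of diagonals; in other words, the Stanley--Reisner complex of $\operatorname{in}_\preceq(I_{n,k})$ coincides with $\Delta_{m,k}^{sym}$. Finally I would apply Corollary~\ref{cor:typeBvdecomp}: via the subword complex description of Theorem~\ref{th:typeB} and the results of Knutson and Miller~\cite[Theorem~2.5 and Corollary~3.8]{knutson_subword_2004}, the complex $\Delta_{m,k}^{sym}$ is a vertex-decomposable simplicial sphere. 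Combining the last two steps, the Stanley--Reisner complex of $\operatorname{in}_\preceq(I_{n,k})$ is a simplicial sphere, which is exactly the statement that the initial ideal is spherical.

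The main obstacle is almost entirely bookkeeping: one must match the combinatorial conventions of~\cite{soll_type-b_2009} and~\cite{rubey_crossings_2010}, namely the indexing of variables by diagonals, the precise choice of $\preceq$, and what counts as a minimal generator of the initial ideal, with the vertex labeling of $\Delta_{m,k}^{sym}$ underlying Theorem~\ref{th:typeB}, so that \emph{spherical} in their algebraic sense literally coincides with \emph{$\Delta_{m,k}^{sym}$ is a simplicial sphere}. In particular one must confirm that the initial ideal is genuinely squarefree, so that the passage to a Stanley--Reisner complex is legitimate; this squarefreeness is itself part of what~\cite{rubey_crossings_2010} establishes in settling Conjecture~13, so no additional work is needed beyond citing it correctly.
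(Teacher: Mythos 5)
Your proposal is correct and follows essentially the same route as the paper, which deduces the corollary exactly by combining Corollary~\ref{cor:typeBvdecomp} (that $\Delta_{m,k}^{sym}$ is a vertex-decomposable simplicial sphere, via Theorem~\ref{th:typeB} and Knutson--Miller) with the proof of \cite[Conjecture~13]{soll_type-b_2009} in \cite{rubey_crossings_2010} identifying the Stanley--Reisner complex of $\operatorname{in}_\preceq(I_{n,k})$ with $\Delta_{m,k}^{sym}$. Your write-up merely makes explicit the bookkeeping (squarefreeness and the Stanley--Reisner dictionary) that the paper leaves implicit in its one-sentence derivation.
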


We finish this section by describing all spherical subword complexes in terms of faces of multi-cluster complexes (see Section~\ref{subsec:Genmultiasso} for the proofs). This generalizes the universality of the multi-associahedron presented in~\cite[Proposition~5.6]{pilaud_brick_2011} to finite Coxeter groups.

\begin{theorem}\label{thm:universal}
  A simplicial sphere can be realized as a subword complex of a given finite type $W$ if and only if it is the link of a face of a multi-cluster 
  complex $\DWk$. 
\end{theorem}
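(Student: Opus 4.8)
The plan is to prove both implications using two elementary facts: a subword complex $\Delta(Q,\pi)$ is a simplicial sphere if and only if $\delta(Q)=\pi$, and the link of a face of a subword complex is again a subword complex. Concretely, for any word $Q$, any $\pi\in W$, and any face $F$ of $\Delta(Q,\pi)$, a subset $G$ disjoint from $F$ lies in $\operatorname{lk}_{\Delta(Q,\pi)}(F)$ exactly when $(Q\setminus F)\setminus G$ contains a reduced word for $\pi$, so that $\operatorname{lk}_{\Delta(Q,\pi)}(F)=\Delta(Q\setminus F,\pi)$, where $Q\setminus F$ denotes the word obtained by deleting the positions in $F$. Applying this to $\DWk=\Delta(\ckwoc,\wo)$ settles the easy implication: every link of a face $F$ is the subword complex $\Delta(\ckwoc\setminus F,\wo)$, and since $F$ is a face its complement contains a reduced word for $\wo$, whence $\delta(\ckwoc\setminus F)=\wo$ and the link is a spherical subword complex of type $W$.

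For the converse, let $\Delta(Q,\pi)$ be an arbitrary spherical subword complex, so $\delta(Q)=\pi$. The first step is to reduce to the case $\pi=\wo$. Here the key lemma I would prove is that if $s\in S$ satisfies $\ell(\pi s)>\ell(\pi)$, then $\Delta(Q,\pi)=\Delta(Q\cdot(s),\pi s)$ as abstract simplicial complexes. The appended position is never a vertex, since being one would force $\pi s\leq\pi$ in Bruhat order, contradicting $\ell(\pi s)>\ell(\pi)$; and for every subword $P$ of $Q$ one checks that $(Q\setminus P)\cdot(s)$ contains a reduced word for $\pi s$ if and only if $Q\setminus P$ contains a reduced word for $\pi$, the point being that a reduced word for $\pi s$ inside $(Q\setminus P)\cdot(s)$ either avoids the final $s$ (so already $\delta(Q\setminus P)\geq\pi s\geq\pi$) or ends with it (leaving a reduced word for $\pi$). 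Choosing a reduced word $R=(s^{(1)},\dots,s^{(p)})$ for $\pi^{-1}\wo$, with $p=N-\ell(\pi)$, and iterating the lemma (each partial product $\pi\,s^{(1)}\cdots s^{(j)}$ has length $\ell(\pi)+j$) yields $\widehat Q:=Q\cdot R$ with $\delta(\widehat Q)=\wo$ and $\Delta(\widehat Q,\wo)=\Delta(Q,\pi)$.

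The second step is to realize $\Delta(\widehat Q,\wo)$ as a link inside a multi-cluster complex. Fix any Coxeter element $c$ with reduced expression $\c$. Since every generator occurs in each copy of $\c$, a greedy left-to-right matching embeds the letters of $\widehat Q$ into consecutive copies of $\c$; hence $\widehat Q$ is a subword of $\c^{k}$ for $k$ at most the length of $\widehat Q$, and therefore a subword of $\ckwoc=\c^{k}\woc$. Letting $F$ be the complementary set of positions, so that $\ckwoc\setminus F=\widehat Q$, the equality $\delta(\widehat Q)=\wo$ guarantees that $F$ is a face of $\DWk$, and the link formula gives $\operatorname{lk}_{\DWk}(F)=\Delta(\widehat Q,\wo)=\Delta(Q,\pi)$. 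This exhibits $\Delta(Q,\pi)$ as the link of a face of a multi-cluster complex, completing the proof.

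I expect the only delicate point to be the appending lemma of the second paragraph, namely the verification that adjoining a length-increasing generator leaves the complex unchanged; this rests on the Bruhat-order description of the Demazure product (a word $V$ contains a reduced word for $w$ if and only if $w\leq\delta(V)$) and on checking that the new position is never a vertex. The link formula and the greedy embedding are routine. Alternatively, the reduction to $\pi=\wo$ could be carried out with the commutation and rotation isomorphisms of Section~\ref{sec:spherical subword complexes}, but the direct appending argument seems cleanest.
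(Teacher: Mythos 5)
Your proof is correct and takes essentially the same route as the paper: you reduce to $\pi=\wo$ by appending a reduced word for $\pi^{-1}\wo$ (this is exactly the paper's Theorem~\ref{th:subword complexes for w0}, which you reprove one generator at a time with a Bruhat-order justification instead of the paper's one-shot concatenation argument), then embed the resulting word letter by letter into the copies of $\c$ in $\ckwoc$ and take the link of the complementary face, with the easy direction following from the observation that links of faces of $\Delta(\ckwoc,\wo)$ are themselves spherical subword complexes. The extra care you take (the non-vertex check for the appended letter, the verification that $\delta(\ckwoc\setminus F)=\wo$) fills in details the paper leaves implicit, but introduces no new ideas.
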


The previous theorem can be obtained for any family of subword complexes, for which arbitrary large powers of $\c$ appear as subwords. However, computations seem to indicate that the multi-cluster complex maximizes the number of facets among subword complexes $\Delta(Q,\wo)$ with word $Q$ of the same size. We conjecture that this is true in general, see Conjecture~\ref{conj:maximal}. We also obtain the following corollary.

\begin{corollary}\label{cor:polytopality}
  The following two statements are equivalent.
  \begin{enumerate}[(i)]
    \item Every spherical subword complex of type~$W$ is polytopal.
    \item Every multi-cluster complex of type~$W$ is polytopal.
  \end{enumerate}
\end{corollary}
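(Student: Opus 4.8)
The plan is to prove the two implications separately. The direction (i) $\Rightarrow$ (ii) will be immediate once I observe that every multi-cluster complex is itself a spherical subword complex of type $W$. Indeed, by Definition~\ref{def:multi-cluster complex} the complex $\DWk$ is the subword complex $\Delta(\ckwoc,\wo)$, and since the suffix $\woc$ of $\ckwoc$ is already a reduced expression for the longest element $\wo$, which is maximal in Bruhat order, the Demazure product satisfies $\delta(\ckwoc)=\wo$. By the criterion recalled in Section~\ref{sec:subword complexes} (a subword complex $\Delta(Q,\pi)$ is a sphere precisely when $\delta(Q)=\pi$), each $\DWk$ is therefore a spherical subword complex. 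Hence, assuming every spherical subword complex of type $W$ is polytopal, each multi-cluster complex is polytopal in particular, which is exactly statement (ii).

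For the converse (ii) $\Rightarrow$ (i), which is the substantive direction, I would start from an arbitrary spherical subword complex $\Delta$ of type $W$ and invoke the universality statement of Theorem~\ref{thm:universal}: it exhibits $\Delta$ as the link of a face $F$ of some multi-cluster complex $\DWk$. Granting hypothesis (ii), this $\DWk$ is the boundary complex of a simplicial polytope $P$. The remaining ingredient is the classical fact from polytope theory that the link of a face in the boundary complex of a simplicial polytope is again polytopal: the link of $F$ in $\partial P$ is combinatorially the boundary complex of the face figure $P/F$, a simplicial polytope of dimension $\dim P - |F|$. Chaining these facts together realizes $\Delta$ as the boundary complex of $P/F$, which gives (i).

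Since Theorem~\ref{thm:universal} is already available, I do not expect a genuine obstacle in this corollary; the proof is essentially a formal deduction. The two places that require care are verifying that multi-cluster complexes do fall under statement (i) --- handled by the Demazure-product computation showing they are spherical subword complexes --- and invoking the face-figure construction correctly, in particular checking that $P/F$ is simplicial so that its boundary complex really is the simplicial sphere $\Delta$. All of the genuinely hard content lives upstream in Theorem~\ref{thm:universal}, whose proof must realize an arbitrary spherical subword complex as a link inside a multi-cluster complex; once that is in hand, Corollary~\ref{cor:polytopality} follows at once.
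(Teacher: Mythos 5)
Your proof is correct and takes essentially the same route as the paper's: (i)~$\Rightarrow$~(ii) is immediate because each $\DWk$ is itself a spherical subword complex, and (ii)~$\Rightarrow$~(i) combines Theorem~\ref{thm:universal} with the fact that the link of a face of a polytope is again polytopal. The extra details you supply --- the Demazure-product verification $\delta(\ckwoc)=\wo$ and the realization of the link as the boundary of the simplicial face figure $P/F$ --- simply make explicit what the paper leaves implicit.
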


%%%%%%%%%%%%%%%%%%%%%%%%%%%%%%%%%%%%%%%%%%%%%%%%%%%%%%%%%%%%%%%%
%
%  Section: General results on (spherical) subword complexes
%
%%%%%%%%%%%%%%%%%%%%%%%%%%%%%%%%%%%%%%%%%%%%%%%%%%%%%%%%%%%%%%%%

\section{General results on spherical subword complexes} \label{sec:spherical subword complexes}

Before proving the main results, we discuss several properties of spherical subword complexes in general which are not specific to multi-cluster complexes. Throughout this section, we let  $Q = (q_{1}, \dots, q_r)$ be a word in $S$ and $\pi = \delta(Q)$.

\subsection{Flips in spherical subword complexes} \label{sec:flip}

\begin{lemma}[Knutson--Miller] \label{le:facets}
  Let $F$ be a facet of $\Delta(Q,\delta(Q))$. For any vertex $q \in F$, there exists a unique vertex $q' \in Q \setminus F$ such that $\big( F \setminus \{ q\}\big) \cup \{q'\}$ is again a facet.
\end{lemma}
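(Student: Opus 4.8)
The plan is to reduce the statement to a purely word-combinatorial fact about ``almost reduced'' words and then to establish that fact using the strong exchange property. Throughout, recall that since $\pi = \delta(Q)$, a facet of $\Delta(Q,\pi)$ is precisely a subword $F$ whose complement $Q \setminus F$ is a \emph{reduced} expression for $\pi$; in particular every facet has the same cardinality $|Q| - \ell(\pi)$.

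First I would form the word $\mathbf{a} := Q \setminus (F \setminus \{q\})$, obtained by reinstating the letter $q$ into the complement of $F$. It is a subword of $Q$ containing the reduced expression $Q \setminus F$ for $\pi$, and it has length $\ell(\pi) + 1$. Since $Q \setminus F \subseteq \mathbf{a} \subseteq Q$ as subwords, monotonicity of the Demazure product under passing to subwords gives $\pi = \delta(Q \setminus F) \le \delta(\mathbf{a}) \le \delta(Q) = \pi$, so $\delta(\mathbf{a}) = \pi$. The vertices $q'$ with $(F \setminus \{q\}) \cup \{q'\}$ a facet are exactly the letters of $\mathbf{a}$ whose deletion leaves a reduced expression for $\pi$; moreover such a $q'$ automatically lies in $Q \setminus F$, and the position of $q$ itself is one such deletion (it recovers $F$). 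Thus everything reduces to the following claim: writing $\mathbf{a} = (a_1, \dots, a_m)$ with $\delta(\mathbf{a}) = \pi$ and $\ell(\pi) = m - 1$, there are exactly two indices $i$ for which $(a_1, \dots, \widehat{a_i}, \dots, a_m)$ is reduced.

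To prove the claim I would track the Demazure product along the prefixes of $\mathbf{a}$. Its length rises from $0$ to $m-1$ in $m$ steps, so there is a unique ``defect'' position $p$ at which it fails to increase; the prefix $(a_1, \dots, a_{p-1})$ is then reduced, say for $\nu := \delta(a_1 \cdots a_{p-1})$, and all steps after $p$ strictly increase. Because $\nu a_p < \nu$, the strong exchange property yields a unique index $j < p$ with $a_1 \cdots \widehat{a_j} \cdots a_{p-1} = \nu a_p$. A direct check then shows that deleting either $a_p$ or $a_j$ produces a reduced expression for $\pi$: in both cases the modified prefix reduces to $\nu$, and the unchanged suffix $a_{p+1}\cdots a_m$ carries $\nu$ up to $\pi$ without defect. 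This establishes existence, i.e. at least two deletable positions.

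For uniqueness I would argue that no other deletion works. If $i > p$, the prefix $(a_1, \dots, a_p)$ survives unchanged and is already non-reduced, so the whole word cannot be reduced. If $i < p$, reducedness forces the length-$(p-1)$ prefix $(a_1, \dots, \widehat{a_i}, \dots, a_{p-1}, a_p)$ to be reduced and to equal $\nu$: any shorter value cannot reach $\ell(\pi)$ with the remaining $m-p$ letters, and any other length-$(p-1)$ value would propagate under the fixed suffix to an element $\neq \pi$, by cancelling the common reduced suffix. Hence $a_1 \cdots \widehat{a_i} \cdots a_{p-1} = \nu a_p$, and since $i \mapsto a_1 \cdots \widehat{a_i} \cdots a_{p-1}$ is injective on $\{1, \dots, p-1\}$ (the deleted reflections being the distinct inversions of the reduced prefix), this forces $i = j$. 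Together with $i = p$, exactly two indices are deletable, proving the lemma. I expect this uniqueness step to be the main obstacle, since it is where one must combine the strong exchange property with the prefix/suffix decomposition; the existence half and the reduction to the word claim are comparatively routine. One could alternatively invoke that $\Delta(Q,\delta(Q))$ is a shellable sphere, hence a pseudomanifold in which every ridge lies in exactly two facets, but this is arguably circular since the flip lemma typically underlies that structural result.
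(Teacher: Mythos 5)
Your proof is correct, but it takes a genuinely different route from the paper: there the statement is dispatched in one line by citing A.~Knutson and E.~Miller's result that $\Delta(Q,\delta(Q))$ is a simplicial sphere \cite[Corollary~3.8]{knutson_subword_2004}, hence a pseudomanifold in which every ridge lies in exactly two facets (with \cite[Lemma~3.5]{knutson_subword_2004} cited as the analogous reformulation). What you have done is essentially reprove that flip lemma of Knutson--Miller from scratch: your reduction to the word $\mathbf{a}$ of length $\ell(\pi)+1$ with $\delta(\mathbf{a})=\pi$ is sound (monotonicity of the Demazure product under subwords gives $\delta(\mathbf{a})=\pi$, and any reduced deletion automatically represents $\pi$ since it has length $\ell(\pi)$ and is Bruhat-below $\delta(\mathbf{a})$); the unique defect position $p$ in the prefix walk, the exchange-property index $j<p$, and the uniqueness analysis (for $i>p$ the non-reduced prefix $(a_1,\dots,a_p)$ survives; for $i<p$ cancelling the common suffix forces the modified prefix to equal $\nu$, and distinctness of the inversion reflections of the reduced prefix forces $i=j$) all check out. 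Each approach buys something: yours is elementary, self-contained, and constructive --- it locates the flip partner explicitly via the exchange property, which is precisely the information the paper recovers separately through the root function in Lemma~\ref{le:uniquevertex} --- while the paper's citation is shorter and leverages the known topology. One small correction to your closing remark: invoking sphericity is not actually circular, since Knutson--Miller derive it from vertex-decomposability \cite[Theorem~2.5]{knutson_subword_2004} and the Demazure-product criterion, not from the flip lemma; still, your caution is reasonable, and your argument has the merit of making the lemma independent of that machinery.
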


\begin{proof}
  This follows from the fact that $\Delta(Q,\delta(Q))$ is a simplicial sphere  \cite[Corollary~3.8]{knutson_subword_2004}.   See~\cite[Lemma~3.5]{knutson_subword_2004} for an analogous reformulation. 
\end{proof} 

Such a move between two adjacent facets is called \Dfn{flip}. Next, we describe how to find the unique vertex $q' \notin F$ corresponding to $q \in F$. For this, we introduce the notion of root functions.

\begin{definition}\label{def:rootfunction}
The \Dfn{root function} $r_F:Q\rightarrow \Phi$ associated to a facet $F$ of $\Delta(Q,\pi)$ sends a letter $q \in Q$ to the root $\r_F(q):=w_q  (\alpha_q) \in \Phi$, where $w_q  \in W$ is given by the product of the letters in the prefix of $Q\setminus F=(q_{i_1}, \dots, q_{i_\ell})$ that appears on the left of $q$ in $Q$, and where $\alpha_q$ is the simple root associated to $q$.
\end{definition}

\begin{lemma}\label{le:uniquevertex}
  Let $F$, $q$ and $q'$ be as in Lemma~\ref{le:facets}. The vertex $q'$ is the unique vertex not in $F$ for which $\r_F(q') \in \{ \pm \r_F(q) \}$.
\end{lemma}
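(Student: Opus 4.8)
The plan is to localize the problem to the ridge $F\setminus\{q\}$ and to analyze the two facets containing it. Write $C=Q\setminus F=(q_{i_1},\dots,q_{i_\ell})$ for the complement of $F$, which by definition of a facet is a reduced expression for $\pi=\delta(Q)$; here $\ell=\ell(\pi)$. Suppose $q$ sits at position $p$ of $Q$, and let $\mathbf{b}$ be the subword $C\cup\{q\}$ obtained by reinserting $q$ at its position, so that $\mathbf{b}=Q\setminus(F\setminus\{q\})$ has length $\ell+1$. First I would observe that $\delta(\mathbf{b})=\pi$: indeed $\delta(\mathbf{b})\geq\pi$ in Bruhat order because $\mathbf{b}$ contains the reduced word $C$, while $\delta(\mathbf{b})\leq\delta(Q)=\pi$ by monotonicity of the Demazure product under passing to subwords. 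Hence $\mathbf{b}$ is a single letter away from being reduced, and the facets of $\Delta(Q,\pi)$ containing the ridge $F\setminus\{q\}$ are exactly the complements of the length-$\ell$ reduced subwords obtained by deleting one letter from $\mathbf{b}$.

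For existence, write $u=q_{i_1}\cdots q_{i_t}$ for the product of the complement letters lying to the left of $p$, so that $\r_F(q)=u(\alpha_q)$. A direct computation shows that the product of $\mathbf{b}$ equals $\tau\pi$, where $\tau=u\,q\,u^{-1}$ is the reflection in the root $\r_F(q)$. For any position $j$ of $\mathbf{b}$, deleting the $j$-th letter multiplies the product on the left by the reflection $\tau_j$ in the local root $\tilde\beta_j$ read off from the prefix of $\mathbf{b}$; thus the deletion yields $\pi$ precisely when $\tau_j=\tau$, that is, when $\tilde\beta_j=\pm\r_F(q)$. Applying this to the flip partner $q'$ furnished by Lemma~\ref{le:facets}, whose complement $\mathbf{b}\setminus\{q'\}$ is reduced for $\pi$, gives $\tilde\beta_{q'}=\pm\r_F(q)$. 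If $q'$ lies to the left of $p$ then $\tilde\beta_{q'}=\r_F(q')$ and we are done; if $q'$ lies to the right of $p$, a short conjugation argument absorbs the inserted factor $q$ from the prefix of $\mathbf{b}$ using $\tilde\beta_{q'}=\pm u(\alpha_q)$, and again yields $\r_F(q')=\pm\r_F(q)$.

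For uniqueness I would use that the vertices not in $F$ are precisely the complement letters $q_{i_1},\dots,q_{i_\ell}$, and that their root values $\r_F(q_{i_j})=q_{i_1}\cdots q_{i_{j-1}}(\alpha_{q_{i_j}})$ are exactly the inversion roots of the reduced word $C$. Since $C$ is reduced, these $\ell$ roots are pairwise distinct positive roots. Consequently at most one of them can equal $\pm\r_F(q)$: whichever sign $\r_F(q)$ carries, only its positive representative can occur among distinct positive roots. Combined with the existence of $q'$ from the previous step, exactly one vertex outside $F$ satisfies $\r_F(q')\in\{\pm\r_F(q)\}$, and it is the flip partner.

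The main obstacle is the discrepancy between the global root function $\r_F$, which is computed relative to the fixed reduced word $Q\setminus F$ and therefore never sees the letter $q$, and the local roots $\tilde\beta_j$ appearing in the deletion computation inside $\mathbf{b}$, which do see $q$ in their prefixes whenever $j$ lies to the right of $p$. Reconciling these two, namely showing that the $\pm$ relation survives the passage between them for right-hand partners, is the only genuinely delicate point; the uniqueness half, by contrast, is immediate once one invokes the distinctness of the inversion roots of a reduced word.
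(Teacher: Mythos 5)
Your proof is correct and takes essentially the same route as the paper's: uniqueness comes from the fact that the values $\r_F(q_{i_1}),\dots,\r_F(q_{i_\ell})$ on the reduced complement are the pairwise distinct (positive) inversion roots of $\pi$, and the identification of $q'$ comes from inserting $q$ into that reduced word. The only difference is presentational: where the paper simply invokes the exchange property to conclude that the letter deleted must carry the same root up to sign, you unwind it explicitly --- deleting the $j$-th letter of $\mathbf{b}$ left-multiplies its product $\tau\pi$ by the reflection $\tau_j$, so deletion returns $\pi$ exactly when $\tau_j=\tau$ --- and then handle the left/right sign bookkeeping by direct conjugation, which is a fine (if slightly longer) substitute.
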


\begin{proof}
  Since $q_{i_1}\dots q_{i_\ell}$ is a reduced expression for $\pi=\delta(Q)$, the set $\{\r_F({q_{i_1}}), \dots, \r_F({q_{i_\ell}})\}$ is 
  equal to the inversion set $\inv(\pi) = \{ \alpha_{i_1}, q_{i_1}(\alpha_{i_2}), \ldots, q_{i_1}\cdots q_{i_{\ell-1}}(\alpha_{i_\ell}) \}$ of $\pi$, which only depends on $\pi$ and not on the chosen reduced expression. In particular, any two elements in this set are distinct. Notice that the root $\r_F(q)$ 
  for $q \in F$ is, up to sign, also contained in $\inv(\pi)$, otherwise it would contradict the fact that the Demazure product of $Q$ is $\pi$. 
  If we insert $q$ into the reduced expression of $\pi$, the exchange property in Coxeter groups implies that we have to delete the unique letter $q'$ that corresponds to the same root, with a positive sign if it appears on the right of $q$ in $Q$, or with a negative sign otherwise. The resulting word is again a reduced expression for $\pi$.
\end{proof}

\begin{remark}
  In the case of cluster complexes, this description can be found in~\cite[Lemma~2.7]{igusa_exceptional_2010}.
\end{remark}

\begin{example}\label{ex:root function}
  As in Example \ref{ex:B2}, consider the Coxeter group of type $B_2$ generated by $S=\{s_1,s_2\}$ with $c = c_1c_2 = s_1 s_2$ and 
  $\cwoc=(c_1, c_2, w_1, w_2, w_3, w_4) = (s_1, s_2, s_1, s_2, s_1, s_2)$. Consider the facet $F=\{c_2,w_1\}$, we obtain
  \begin{align*}
    \r_F(c_1) &= \alpha_1, & \r_F(w_2) &= s_1(\alpha_2) = \alpha_1+\alpha_2, \\
    \r_F(c_2) &= s_1(\alpha_2) = \alpha_1+\alpha_2, & \r_F(w_3) &= s_1 s_2(\alpha_1) = \alpha_1+2\alpha_2,\\
    \r_F(w_1) &= s_1(\alpha_1) = -\alpha_1, & \r_F(w_4) &= s_1 s_2 s_1 (\alpha_2) = \alpha_2. 
  \end{align*}
  Since $\r_F(c_2)=\r_F(w_2)$,  the letter $c_2$ in $F$ flips to $w_2$. As $w_2$ appears on the right of $c_2$, both roots have the same 
  sign. Similarly, the letter $w_1$ flips to $c_1$, because $\r_F(c_1)=-\r_F(w_1)$. In this case, the roots have different signs because 
  $c_1$ appear on the left of $w_1$.
\end{example}

The following lemma describes the relation between the root functions of two facets connected by a flip.

\begin{lemma}\label{le:flip}
  Let $F$ and $F' = \big(F \setminus \{q\}\big)\cup\{q'\}$ be two adjacent facets of the subword complex $\Delta(Q,\delta(Q))$, and assume that $q$ appears on the left of $q'$ in Q. Then, for every letter $p\in Q$,
  \[
    \r_{F'}(p) = 
    \begin{cases}
      t_q(\r_F(p)) & \text{if } p \text{ is between } q \text{ and } q' \text{, or } p=q', \\
      \r_F(p) & \text{otherwise.}
    \end{cases}
  \]
  Here, $t_q=w_qqw_q^{-1}$ where $w_q$ is the product of the letters in the prefix of $Q\backslash F$ that appears on the left of $q$ in $Q$. By construction, $t_q$ is the reflection in $W$ orthogonal to the root $r_F(q)=w_q(\alpha_q)$.
\end{lemma}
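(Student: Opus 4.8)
The plan is to track how the reduced complement word and its prefix products change under the flip, and then to read off the root function position by position. Since $F' = \big(F\setminus\{q\}\big)\cup\{q'\}$ with $q$ to the left of $q'$, the complement $Q\setminus F'$ is obtained from the reduced word $Q\setminus F$ of $\pi=\delta(Q)$ by deleting the letter at the position of $q'$ and inserting $q$ at its own position; being complements of facets, both words are reduced expressions for $\pi$. For a position $p$, write $w_p^F$ (respectively $w_p^{F'}$) for the product of the letters of $Q\setminus F$ (respectively $Q\setminus F'$) lying strictly to the left of $p$, so that $\r_F(p)=w_p^F(\alpha_p)$ and $\r_{F'}(p)=w_p^{F'}(\alpha_p)$. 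Note first that $t_q=w_q q w_q^{-1}$, with $w_q=w_q^F$ the prefix product before $q$, is exactly the reflection in the hyperplane orthogonal to $\r_F(q)=w_q(\alpha_q)$, since conjugating the simple reflection $q$ by $w_q$ sends $\alpha_q$ to $w_q(\alpha_q)$.

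The key algebraic input is a single commutation identity. Let $v$ denote the product of the letters of $Q\setminus F$ lying strictly between $q$ and $q'$, so that the prefix product before $q'$ is $w_{q'}^F=w_q v$. By Lemma~\ref{le:uniquevertex} we have $\r_F(q')=\pm\,\r_F(q)$, hence the reflections orthogonal to these two roots coincide: $w_q q w_q^{-1}=w_q v\,q'\,v^{-1}w_q^{-1}$. Cancelling $w_q$ on both sides yields $q=v q' v^{-1}$, that is, $q v = v q'$. Establishing this identity is the main obstacle of the argument; once it is available, the rest is a bookkeeping of prefixes, and the reduced-word reformulation in the proof of Lemma~\ref{le:uniquevertex} is exactly what makes the two reflections equal.

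With the identity in hand I would conclude by a case analysis on the position of $p$. If $p$ lies weakly to the left of $q$ (this includes $p=q$, consistent with the lemma since $q$ is not counted as lying between $q$ and $q'$), then neither of the two letters in the symmetric difference precedes $p$, so $w_p^{F'}=w_p^F$ and $\r_{F'}(p)=\r_F(p)$. If $p$ lies strictly between $q$ and $q'$, or $p=q'$, then before $p$ the word $Q\setminus F'$ agrees with $Q\setminus F$ except that $q$ has been inserted right after $w_q$; writing $w_p^F=w_q v_p$ for the intervening product $v_p$, one gets $w_p^{F'}=w_q q v_p=\big(w_q q w_q^{-1}\big)w_p^F=t_q\,w_p^F$, whence $\r_{F'}(p)=t_q\big(\r_F(p)\big)$. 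Finally, if $p$ lies strictly to the right of $q'$, then $w_p^F=w_q v q' w'$ and $w_p^{F'}=w_q q v w'$ for the appropriate tail product $w'$; applying $q v=v q'$ rewrites the latter as the former, so $w_p^{F'}=w_p^F$ and $\r_{F'}(p)=\r_F(p)$. These cases exhaust all positions and reproduce the two branches of the statement.
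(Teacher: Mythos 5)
Your proof is correct and follows essentially the same route as the paper: the same three-way case analysis on the position of $p$, with the middle case handled by observing that inserting $q$ into the prefix amounts to left-multiplication by $t_q=w_qqw_q^{-1}$, and the rightmost case resting on Lemma~\ref{le:uniquevertex}. Your commutation identity $qv=vq'$ is just an unpacked form of the paper's observation that $t_q=t_{q'}$ (so the two reflections cancel in $w_p'=t_qt_{q'}w_p$), since both follow from $\r_F(q')=\pm\,\r_F(q)$ and the fact that a root and its negative determine the same reflection.
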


\begin{proof}
Let $p$ be a letter in $Q$, and $w_p ,w_p '$ be the products of the letters in the prefixes of $Q\backslash F$ and $Q\backslash F'$ that appear on the left of $p$. Then, by definition $r_F(p)=w_p (\alpha_p)$ and $r_{F'}(p)=w_p '(\alpha_p)$. We consider the following three cases:
\begin{itemize}
  \item If $p$ is on the left of $q$ or $p=q$, then $w_p =w_p '$ and $r_F(p)=r_{F'}(p)$.
  \item If $p$ is between $q$ and $q'$ or $p=q'$, then $w_p '$ can be obtained from $w_p $ by adding the letter $q$ at its corresponding position. This addition is the result of multiplying $w_p $ by $t_q=w_qqw_q^{-1}$ on the left, i.e. $w_p '=t_qw_p $. Therefore, $\r_F(p)=t_q(\r_{F'}(p))$.
  \item If $p$ is on the right of $q'$, consider the reflection $t_{q'}=w_{q'}q'w_{q'}^{-1}$ where $w_{q'}$ is the product of the letters in the prefix of $Q\backslash F$ that appears on the left of $q'$. By the same argument, one obtains that $w_p '=t_qt_{q'}w_p $. In addition, $t_q=t_{q'}$ because they correspond to the unique reflection orthogonal to the roots $r_F(q)$ and $r_F(q')$, which are up to sign equal by Lemma~\ref{le:uniquevertex}. Therefore, $w_p '=w_p $ and $r_{F'}(p)=r_F(p)$.  
\end{itemize}
\end{proof}

\subsection{Isomorphic spherical subword complexes}\label{subsec:isomorphic subword complexes}

We now reduce the study of spherical subword complexes in general to the case where $\delta(Q) = \pi = \wo$, and give two operations on the word $Q$ giving isomorphic subword complexes.

\begin{theorem}\label{th:subword complexes for w0}
  Every spherical subword complex $\Delta(Q,\pi)$ is isomorphic to $\Delta(Q',\wo)$, for some word $Q'$ such that $\delta(Q')=\wo$.
\end{theorem}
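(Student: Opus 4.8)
The plan is to append to $Q$ a reduced word that completes $\pi := \delta(Q)$ to the longest element $\wo$, and then to verify that the appended letters play no role in the resulting complex. Since $\wo$ is the longest element, $\ell(\pi^{-1}\wo) = N - \ell(\pi)$; let $R = (r_1,\dots,r_{N-\ell(\pi)})$ be any reduced word for $\pi^{-1}\wo$, and set $Q' := QR$ to be the concatenation. I claim that the identity on the positions coming from $Q$ is the desired isomorphism $\Delta(Q,\pi)\cong\Delta(Q',\wo)$.

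First I would check that $\delta(Q') = \wo$, so that $\Delta(Q',\wo)$ is indeed spherical by the Knutson--Miller criterion recalled above. Using associativity of the Demazure product, $\delta(Q') = \delta(Q)\ast\delta(R) = \pi\ast(\pi^{-1}\wo)$, where $\ast$ denotes the Demazure ($0$-Hecke) product on $W$. Because $\ell(\pi) + \ell(\pi^{-1}\wo) = N = \ell(\wo)$ and $\pi\cdot(\pi^{-1}\wo) = \wo$, concatenating a reduced word for $\pi$ with the reduced word $R$ yields a reduced word for $\wo$; hence its Demazure product is the ordinary product $\wo$, giving $\pi\ast(\pi^{-1}\wo) = \wo$.

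Next I would identify the faces. For any position set $P$ contained in the $Q$-part, $Q'\setminus P = (Q\setminus P)\,R$, so $\delta(Q'\setminus P) = \delta(Q\setminus P)\ast(\pi^{-1}\wo)$. Writing $x := \delta(Q\setminus P)$, the monotonicity of the Demazure product under passing to subwords gives $x \le \pi$ in Bruhat order, and the bound $\ell(x\ast y)\le \ell(x)+\ell(y)$ gives $\ell\big(x\ast(\pi^{-1}\wo)\big)\le \ell(x) + (N-\ell(\pi))$. Consequently $x\ast(\pi^{-1}\wo) = \wo$ forces $\ell(x)\ge \ell(\pi)$, hence $\ell(x) = \ell(\pi)$ and therefore $x = \pi$; conversely $x=\pi$ gives $x\ast(\pi^{-1}\wo)=\wo$ by the computation above. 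Thus, for $P\subseteq Q$, we have $\delta(Q'\setminus P) = \wo$ if and only if $\delta(Q\setminus P) = \pi$, i.e.\ $P$ is a face of $\Delta(Q',\wo)$ exactly when it is a face of $\Delta(Q,\pi)$.

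It remains to see that no face of $\Delta(Q',\wo)$ uses a position of $R$. Indeed, the same length estimate, applied after removing any single $R$-position, shows that every reduced expression for $\wo$ inside $Q'$ must use all of $R$; equivalently, each letter of $R$ lies in the complement of every facet and so is a non-vertex of $\Delta(Q',\wo)$. Therefore every face of $\Delta(Q',\wo)$ is supported on the $Q$-part, and by the previous paragraph the identity map on $Q$-positions is a face-preserving bijection, yielding the isomorphism. The main obstacle is purely bookkeeping with the Demazure product: one must carefully invoke its associativity, the submultiplicativity $\ell(x\ast y)\le\ell(x)+\ell(y)$, and the fact that the Demazure product of a subword is Bruhat-below that of the full word. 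These are standard properties of the $0$-Hecke monoid, but they are precisely what makes the two sets of faces coincide on the nose rather than merely abstractly.
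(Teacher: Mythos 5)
Your proof is correct and takes essentially the same route as the paper: both construct $Q'$ by appending a reduced word for $\pi^{-1}\wo = \delta(Q)^{-1}\wo$ to $Q$, observe that every reduced expression of $\wo$ inside $Q'$ must use all the appended letters, and conclude that the faces of $\Delta(Q',\wo)$ are exactly the faces of $\Delta(Q,\pi)$ supported on the $Q$-positions. Your write-up merely makes explicit the $0$-Hecke bookkeeping (associativity of the Demazure product, the bound $\ell(x\ast y)\le\ell(x)+\ell(y)$, and subword monotonicity) that the paper's terser argument leaves implicit.
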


\begin{proof}
  Let $\mathbf{r}$ be a reduced word for $\pi^{-1}\wo=\delta(Q)^{-1}\wo \in W$. Moreover, define the word $Q'$ as the 
  concatenation of $Q$ and $\mathbf{r}$. By construction, the Demazure product of $Q'$ is $\wo$, and every reduced expression 
  of $\wo$ in $Q'$ must contain all the letters in $\mathbf{r}$. The reduced expressions of $\wo$ 
  in $Q'$ are given by reduced expressions of $\pi$ in $Q$ together with all the letters in $\mathbf{r}$. Therefore, the subword 
  complexes $\Delta(Q,\pi)$ and $\Delta(Q',\wo)$ are isomorphic.
\end{proof}

Recall the involution $\psi:S\rightarrow S$ given by $\psi(s)=\wo^{-1} s \wo$. This involution was used in~\cite{bergeron_isometry_2009} to characterize isometry classes of the $c$-generalized associahedra. Define the \Dfn{rotated word} $\rotatedword{Q}{s}$ or the \Dfn{rotation} of $Q = (s,q_2,\ldots,q_r)$ along the letter $s$ as $(q_2, \dots, q_r, \psi(s))$. The following two propositions are direct consequences of the definition of subword complexes.

\begin{proposition}\label{pr:up to commutations}
  If two words $Q$ and $Q'$ coincide up to commutations, then $\Delta(Q,\pi) \cong \Delta(Q',\pi)$.
\end{proposition}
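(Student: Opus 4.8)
The plan is to reduce to a single commutation and then exhibit the isomorphism explicitly as a relabeling of positions. Since two words that coincide up to commutations are joined by a sequence of interchanges of consecutive commuting letters, and since simplicial isomorphisms compose, it suffices to treat the case where $Q' = (q_1, \ldots, q_{i-1}, q_{i+1}, q_i, q_{i+2}, \ldots, q_r)$ is obtained from $Q = (q_1, \ldots, q_r)$ by swapping two adjacent commuting generators $q_i$ and $q_{i+1}$. I would define the candidate isomorphism $\varphi$ on the vertex sets (positions) to be the identity outside $\{i,i+1\}$ and to exchange positions $i$ and $i+1$, so that $\varphi$ sends each position of $Q$ to the position of $Q'$ carrying the same letter.

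The key step is to show that for every subset $P$ of positions, the complementary words $Q\setminus P$ and $Q'\setminus\varphi(P)$ are either literally equal or differ by the single commutation of $q_i$ and $q_{i+1}$. I would run the short case analysis according to how $P$ meets $\{i,i+1\}$: if $P$ contains both or neither of these positions the two complements coincide as words, while if $P$ contains exactly one of them the surviving letter from the pair is the same in both complements, so again the complements agree; the only case producing a genuine swap is when $P$ contains neither position, where $Q\setminus P$ reads $\cdots q_i q_{i+1}\cdots$ and $Q'\setminus\varphi(P)$ reads $\cdots q_{i+1} q_i\cdots$.

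It then remains to observe that commutation preserves the defining face condition. Because $q_i$ and $q_{i+1}$ commute, any subword of $Q\setminus P$ that is a reduced expression for $\pi$ is carried by $\varphi$ to a subword of $Q'\setminus\varphi(P)$ which either is unchanged as a word, or has the single factor $q_i q_{i+1}$ replaced by $q_{i+1} q_i$; the latter is again a reduced expression for $\pi$, since a commutation move preserves both the represented group element and reducedness. Running the same argument in the reverse direction shows that $P$ is a face of $\Delta(Q,\pi)$ if and only if $\varphi(P)$ is a face of $\Delta(Q',\pi)$, so $\varphi$ is the desired simplicial isomorphism, and composing the maps obtained for each elementary commutation yields the statement in general.

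The argument involves no serious obstacle; the only point requiring minimal care is the case in which a chosen reduced subword uses both swapped letters, and there the relation $q_i q_{i+1} = q_{i+1} q_i$ immediately gives that reducedness and the represented element are unaffected.
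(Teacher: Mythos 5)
Your proof is correct and is essentially the paper's argument spelled out in detail: the paper disposes of this proposition in one line, saying the isomorphism is induced by reordering the letters of $Q$ to obtain $Q'$, which is exactly your position-swapping map composed over elementary commutations. Note only the small wording slip in your case analysis --- you first place ``neither'' in the bucket where the complements coincide and then correctly identify it as the unique case producing a genuine swap (the coinciding cases are ``both'' and ``exactly one'') --- but this is evidently a typo and the mathematics is sound.
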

\begin{proof}
  The isomorphism between $\Delta(Q,\pi)$ and $\Delta(Q',\pi)$ is induced by reordering the letters of $Q$ to obtain $Q'$.
\end{proof}

\begin{proposition}\label{pr:rotated}
  Let $Q = (s,q_2,\ldots,q_r)$. Then $\Delta(Q,\wo)\cong \Delta(\rotatedword{Q}{s}, \wo)$.
\end{proposition}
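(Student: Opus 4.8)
The plan is to exhibit an explicit simplicial isomorphism induced by a bijection of the two vertex sets, namely the positions of the two words. Writing $Q = (s,q_2,\ldots,q_r)$ and $\rotatedword{Q}{s} = (q_2,\ldots,q_r,\psi(s))$, I would define the position bijection $\phi$ that sends position $1$ of $Q$ (the letter $s$) to the last position of $\rotatedword{Q}{s}$ (the letter $\psi(s)$), and position $i$ to position $i-1$ for $2 \le i \le r$. Since a simplicial complex is determined by its facets and $\phi$ is a bijection of positions that respects inclusion, faces are exactly the subsets of facets, so it suffices to check that $\phi$ restricts to a bijection between the facets of $\Delta(Q,\wo)$ and those of $\Delta(\rotatedword{Q}{s},\wo)$.

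The reduction to facets is what makes the condition tractable. I would first record that the facets of a subword complex $\Delta(Q,\wo)$ are precisely the complements of subwords that are themselves reduced expressions for $\wo$: if $Q\setminus F$ contained a reduced expression as a proper subword one could enlarge $F$, contradicting maximality, while removing any letter from an exact reduced word for $\wo$ lowers its length below $N=\ell(\wo)$ and so destroys the property of containing a reduced word for $\wo$. Thus the task becomes showing that $Q\setminus F$ is a reduced word for $\wo$ if and only if $\rotatedword{Q}{s}\setminus\phi(F)=\phi(Q\setminus F)$ is.

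The verification then splits into two cases according to whether the initial letter $s$ lies in $Q\setminus F$. If it does not, then $Q\setminus F$ is a subword of $(q_2,\ldots,q_r)$ and $\phi$ leaves it unchanged letter-for-letter (it stays inside the prefix $(q_2,\ldots,q_r)$ of the rotated word and omits $\psi(s)$), so the two complements are literally the same sequence of generators and the equivalence is immediate. If $s$ does lie in $Q\setminus F$, I write $Q\setminus F=(s,p_2,\ldots,p_N)$, a reduced word for $\wo$, whose rotation is $\phi(Q\setminus F)=(p_2,\ldots,p_N,\psi(s))$. The key identity is $s\wo=\wo\psi(s)$, which is just the defining relation $\psi(s)=\wo^{-1}s\wo$ rearranged, together with the standard fact that $\psi(s)\in S$ because conjugation by $\wo$ permutes the simple reflections. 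From $s\,p_2\cdots p_N=\wo$ I get $p_2\cdots p_N=s\wo=\wo\psi(s)$, hence $p_2\cdots p_N\,\psi(s)=\wo$; since this expression has length $N=\ell(\wo)$ it is reduced, and the reverse implication is symmetric. This establishes the facet bijection and hence the isomorphism.

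I do not expect a genuine obstacle here: once the identity $s\wo=\wo\psi(s)$ and the membership $\psi(s)\in S$ are in hand, the argument is essentially bookkeeping about which position moves where. The only point demanding a little care is the reduction to facets, which lets me work with exact reduced words rather than the a priori more delicate ``contains a reduced word'' condition defining general faces; everything else is a length count.
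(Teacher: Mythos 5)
Your proof is correct and takes essentially the same route as the paper's: the identical position bijection sending the initial $s$ to the final $\psi(s)$ and fixing all other letters, with the key identity $s\wo = \wo\psi(s)$ doing all the work. The paper states this in two lines; your version simply makes explicit the reduction to facets and the two-case bookkeeping, all of which is sound.
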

\begin{proof}
  The isomorphism between $\Delta(Q,\wo)$ and $\Delta(\rotatedword{Q}{s},\wo)$ is induced by sending $q_i$ to $q_i$ for $2 \leq i \leq r$ and the initial $s$ to the final $\psi(s)$. The results follows using the fact that $s\wo = \wo\psi(s)$.
\end{proof}

Theorem~\ref{th:subword complexes for w0} and Proposition~\ref{pr:rotated} give an alternative viewpoint on spherical subword complexes. First, we can consider $\pi$ to be the longest element $\wo \in W$. Second, $\Delta(Q,\wo)$ does not depend on the word $Q$ but on the bi-infinite word
\begin{align*}
  \widetilde Q &= \cdots \hspace{15pt} Q \hspace{45pt} \psi(Q) \hspace{40pt} Q \hspace{26pt} \cdots \\
               &= \ldots q_1,\ldots,q_r,\psi(q_1),\ldots,\psi(q_r),q_1,\ldots,q_r,\ldots.
\end{align*}
Taking any connected subword in $\widetilde Q$ of length $r$ gives rise to an isomorphic spherical subword complex.

%%%%%%%%%%%%%%%%%%%%%%%%%%%%%%%%%%%%%%%%%%%%%%%%%%%%%%%%%%%%%%%%
%
%  Section: Proof of independence
%
%%%%%%%%%%%%%%%%%%%%%%%%%%%%%%%%%%%%%%%%%%%%%%%%%%%%%%%%%%%%%%%%

\section{Proof of Theorem \ref{theorem:independent_of_c}} \label{sec:sorting}

In this section, we prove that all multi-cluster complexes for the various Coxeter elements are isomorphic. This result relies on the theory of \emph{sorting words} and \emph{sortable elements} introduced by N. Reading in \cite{reading_clusters_2007}. The $c$-\Dfn{sorting word} for $w\in W$ is the lexicographically first (as a sequence of positions) subword of $\c^{\infty}=
\c\c\c\dots$ which is a reduced word for $w$. We use the following result of D. Speyer.

\begin{lemma}[{\cite[Corollary 4.1]{speyer_powers_2009}}]\label{le:speyer}
  The longest element $\wo \in W$ can be expressed as a reduced prefix of $\c^\infty$ up to commutations.
\end{lemma}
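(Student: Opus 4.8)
The plan is to interpret the phrase ``reduced prefix of $\c^\infty$ up to commutations'' through the \emph{heap} of the infinite word $\c^\infty=(q_1,q_2,\dots)$: this is the poset on the set of positions $\{1,2,\dots\}$ generated by the relations $i\prec j$ whenever $i<j$ and either $q_i=q_j$ or $q_i,q_j$ do not commute. A subword of $\c^\infty$ is commutation-equivalent to an honest prefix of $\c^\infty$ exactly when the set of positions it occupies is an order ideal (down-set) of this heap, since a single interchange of two adjacent commuting letters is precisely a swap of two heap-incomparable positions. Thus it suffices to take the $c$-sorting word $\woc$ of $\wo$, which by definition is a reduced word for $\wo$ occupying a certain set of positions of $\c^\infty$, and to show that this set of positions is an order ideal of the heap of $\c^\infty$.

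First I would record the nested-support description of sorting words: writing $\c_K$ for the subword of $\c$ supported on $K\subseteq S$, the $c$-sorting word of $\wo$ factors as $\woc=\c_{K_1}\c_{K_2}\cdots\c_{K_p}$ with $S=K_1\supseteq K_2\supseteq\cdots\supseteq K_p$, the first support being all of $S$ precisely because every generator is a left descent of the longest element $\wo$. In this language a position is occupied if and only if its generator lies in the support attached to its copy of $\c$, and nestedness gives at once that once a generator is dropped it is dropped in every later copy. This disposes of all heap cover relations $i\prec j$ with $q_i=q_j$: if such an $i$ is unoccupied then so is $j$, so no occupied position sits above an unoccupied one along a ``same-letter'' cover.

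It then remains to control the covers coming from a non-commuting pair $q_i=u\neq t=q_j$; such a cover forces $i$ and $j$ to lie in the same copy of $\c$ or in two consecutive copies. The goal is the \emph{propagation} statement: if $u$ is dropped at an occurrence, then every non-commuting neighbour $t$ is already dropped at the occurrence covering it. I expect this to be the main obstacle, and it is exactly where finiteness and the choice $w=\wo$ enter, the analogue being false for general sortable elements. I would attack it by induction peeling off the initial letter $s$ of $c$ and using the recursion expressing $\woc$ as $s$ followed by the $scs$-sorting word of $s\wo$, together with $\c^\infty=s\cdot{\c'}^\infty$ for $\c'=(c_2,\dots,c_n,s)$ a reduced word for $scs$. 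Occupying the global minimum position $1$ reduces the down-set condition to the corresponding condition one step into $\c^\infty$; the genuine difficulty is that $s\wo$ is no longer a longest element, so the induction must be run on a strengthened hypothesis that tracks how the descent structure of $\wo$ degrades under this rotation and keeps the propagation property available at each stage.

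Alternatively, since this statement is precisely Speyer's Corollary~4.1 in \cite{speyer_powers_2009}, one may simply invoke it. Its proof proceeds through the reducedness of powers of Coxeter elements, which is the root-theoretic counterpart of the combinatorial propagation above: the occupied positions of $\woc$ are exactly those whose associated root $q_1\cdots q_{i-1}(\alpha_{q_i})$ is positive and not previously collected, and reducedness governs exactly when, scanning $\c^\infty$, a dropped letter may be commuted past the collected ones. Either way, once the neighbour-propagation property is in hand, the order-ideal property of $\woc$, and hence the lemma, follows immediately.
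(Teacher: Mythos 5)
The paper contains no proof of this lemma at all: it is imported verbatim as Corollary~4.1 of \cite{speyer_powers_2009}. So your fallback option of simply invoking Speyer coincides exactly with what the paper does, and it is also the only complete argument in your proposal.

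Your attempted self-contained route, by contrast, has a genuine gap, which to your credit you flag yourself. The heap formalism is sound: position sets that are order ideals of the heap of $\c^\infty$ are precisely the subwords that are prefixes up to commutations, and the factorization $\woc=\c_{K_1}\c_{K_2}\cdots\c_{K_p}$ with nested supports does dispose of the same-letter covers. (Note, however, that nested supports is not a formal property of sorting words of arbitrary elements; it is the defining condition of $c$-sortability, so already at this step you are silently invoking Reading's theorem that $\wo$ is $c$-sortable, cited elsewhere in this paper as \cite[Corollary~4.4]{reading_clusters_2007}.) After these reductions, the entire content of the lemma is concentrated in the propagation statement for covers between non-commuting letters, and this you state without proving: the induction on rotating the initial letter $s$ is never executed, and the ``strengthened hypothesis'' that would make it close is never identified --- which matters, because $s\wo$ is not a longest element, so the naive induction hypothesis genuinely does not apply. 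Since, as you correctly observe, propagation fails for general $c$-sortable elements, some substantive input specific to $\wo$ is unavoidable; in Speyer's treatment this is the reducedness of powers of Coxeter elements (his main theorem), and your sketch supplies no substitute for it. As written, your first route is a clean reduction of the lemma to its hardest step, not a proof of it; the citation route is what both you and the paper ultimately rely on.
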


The next lemma unifies previously known results; the first statement it trivial, the second statement can be found in \cite[Section 4]{speyer_powers_2009}, and the third statement is equivalent to \cite[Lemma 1.6]{hohlweg_permutahedra_2011}.

\begin{lemma}\label{lem:prefix}
  Let $s$ be initial in $c$ and let ${\bf p}=(s,p_2,\dots, p_r)$ be a prefix of $\c^\infty$ up to commutations. Then,
  \begin{enumerate}[(i)]
    \item $(p_2,\dots,p_r)$ is a prefix of $({\bf c'})^\infty$ up to commutations, where $\bf c'$ denotes a word for the Coxeter element $c' = scs$,
    \item if $p=sp_2\cdots p_r$ is reduced then $\bf p$ is the $c$-sorting word for $p$ up to commutations,
    \item if $sp_2\cdots p_r s'$ is reduced for some $s'\in S$ then $\bf p$ is a prefix of the $c$-sorting word for $ps'$ up to commutations.
  \end{enumerate}
\end{lemma}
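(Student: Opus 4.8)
The three statements hinge on a single factorization. Since $s$ is initial in $c$, I may choose the reduced word $\c=(s,c_2,\dots,c_n)$ for $c$; all reduced words of a Coxeter element differ only by commutations, so being a prefix of $\c^\infty$ up to commutations does not depend on this choice. Then $c'=scs=c_2\cdots c_n s$ has reduced word $\c'=(c_2,\dots,c_n,s)$, and one reads off the identity $\c^\infty=(s)\cdot(\c')^\infty$. The plan is to prove (i) directly from this factorization, and then to obtain (ii) and (iii) by induction on $\ell(p)$, feeding the passage from $c$ to $c'$ into the recursive description of sorting words.

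For (i), let ${\bf p}'$ be a literal prefix of $\c^\infty$ with ${\bf p}'\equiv{\bf p}$ up to commutations. Since ${\bf p}$ is nonempty and $\c^\infty=(s)\cdot(\c')^\infty$, the prefix ${\bf p}'$ has the form $(s)\cdot{\bf q}'$ with ${\bf q}'$ a literal prefix of $(\c')^\infty$. Thus $(s)\cdot(p_2,\dots,p_r)={\bf p}\equiv{\bf p}'=(s)\cdot{\bf q}'$, and left-cancellativity of the commutation (trace) monoid on the alphabet $S$ yields $(p_2,\dots,p_r)\equiv{\bf q}'$. As ${\bf q}'$ is a literal prefix of $(\c')^\infty$, this is precisely the assertion that $(p_2,\dots,p_r)$ is a prefix of $(\c')^\infty$ up to commutations.

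For (ii) and (iii) I will argue by induction on $r$, the base case $r=0$ (resp.\ $r=1$) being clear. The key auxiliary fact is that the first letter of any nonempty prefix of $\c^\infty$ up to commutations is initial in $c$: its first occurrence in any literal representative lies in the first block $\c$, and bringing it to the front forces all of its non-commuting predecessors in $\c$ to lie to its right, which is exactly the condition of being initial. Hence $p_2$ is initial in $c'$, and the inductive hypotheses apply to the pair $(c',(p_2,\dots,p_r))$ supplied by (i). I then combine this with the recursion for sorting words: when $s$ is initial in $c$ and is a left descent of $w$, the $c$-sorting word of $w$ equals $(s)$ followed by the $c'$-sorting word of $sw$, which is immediate from $\c^\infty=(s)\cdot(\c')^\infty$ together with the greedy (lexicographic) selection. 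This gives (ii): if $p=s p_2\cdots p_r$ is reduced then $p_2\cdots p_r=sp$ is reduced, the induction identifies $(p_2,\dots,p_r)$ with the $c'$-sorting word of $sp$ up to commutations, and prepending $s$ recovers ${\bf p}$ as the $c$-sorting word of $p$. Statement (iii) follows along the same lines: assuming $s p_2\cdots p_r s'$ reduced, $s$ is a left descent of $ps'$, the induction shows $(p_2,\dots,p_r)$ is a prefix of the $c'$-sorting word of $sps'$ up to commutations, and prepending $s$ (using that prefixes up to commutations are preserved under left multiplication by a fixed letter) exhibits ${\bf p}$ as a prefix of the $c$-sorting word of $ps'$.

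I expect the main obstacle to be bookkeeping with the phrase ``up to commutations'': the arguments are transparent for literal prefixes, and the only genuine content is to make the two reductions rigorous inside the commutation monoid. This is handled by two structural facts about the trace monoid on $S$ — left-cancellativity (used in (i)) and the description of admissible first letters as the initial generators (used to launch the induction in (ii) and (iii)) — both elementary, but each must be invoked explicitly rather than absorbed into the phrase ``trivial''.
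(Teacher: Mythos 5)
Your proof is correct, but note that the paper does not actually prove this lemma: it declares part (i) trivial and outsources (ii) to \cite[Section~4]{speyer_powers_2009} and (iii) to \cite[Lemma~1.6]{hohlweg_permutahedra_2011}. What you have written is essentially the standard argument underlying those citations --- the factorization $\c^\infty=(s)\cdot({\bf c'})^\infty$, the greedy characterization of sorting words (position $1$ is taken if and only if $s$ is a left descent, after which the remainder is the ${\bf c'}$-sorting word of $sw$), and an induction that peels off initial letters --- so you have supplied a correct self-contained proof rather than a genuinely new route; its merit is that all three parts become one induction, and that the two trace-monoid facts doing the real work (cancellativity, and the criterion that a letter can be brought to the front of a word if and only if nothing noncommuting precedes its first occurrence, which is what identifies admissible first letters with initial letters of the Coxeter element) are isolated explicitly. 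One caution on bookkeeping: ``prefix of $\c^\infty$ up to commutations'' is used in this paper (and in the Eriksson--Eriksson characterization invoked in Section~7) in the slightly broader sense of a trace prefix, i.e., ${\bf p}\,{\bf u}$ equals $\c^k$ up to commutations for some word ${\bf u}$, not only ``commutation-equivalent to a literal prefix of the fixed word $\c^\infty$''; these notions differ in general (for commuting $a,b$ and $c=ab$, the word $(a,a)$ is a trace prefix of $\c^2$ but is equivalent to no literal prefix of $(a,b,a,b,\dots)$). This does not break anything: your cancellation argument covers the broader reading verbatim, since $[{\bf p}\,{\bf u}]=[\c^k]=[s]\cdot\big[({\bf c'})^{k-1}(c_2,\dots,c_n)\big]$ and $({\bf c'})^{k-1}(c_2,\dots,c_n)$ is a literal prefix of $({\bf c'})^k$, so left-cancelling $[s]$ exhibits $(p_2,\dots,p_r)$ as a trace prefix of $({\bf c'})^\infty$; you should simply state the hypothesis in this form so that the lemma applies where the paper uses it.
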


\begin{proposition}\label{prop:init_letter}
  Let $s$ be initial in $c$ and let $\woc=(s,w_2,\dots, w_N)$ be the $c$-sorting word of $\wo$ up to commutations. Then, $(w_2,\dots, w_N,\psi(s))$ is the $scs$-sorting word of $\wo$ up to commutations. 
\end{proposition}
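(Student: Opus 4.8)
The plan is to transport the $c$-sorting word of $\wo$ to a prefix of $(\c')^\infty$, where $\c'$ is a word for the Coxeter element $c' := scs$, by means of Lemma~\ref{lem:prefix}, and then to recognize $\psi(s)$ as the forced final letter of the $c'$-sorting word of $\wo$.

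First I would collect the elementary facts. Since $s$ is initial in $c$, writing $c = s\,c''$ with $c'' = sc$ reduced gives $scs = c''s$, a reduced Coxeter word with $s$ now final, so $c' = scs$ is indeed a Coxeter element; moreover $\psi(s) = \wo^{-1} s \wo \in S$ because conjugation by $\wo$ is a diagram automorphism. Because $\woc = (s, w_2, \ldots, w_N)$ is reduced for $\wo$, the word $(w_2, \ldots, w_N)$ is reduced for $s\wo$ and has length $N-1$. Using the identity $s\wo = \wo\,\psi(s)$ (equivalently $\psi(s) = \wo^{-1} s \wo$), I would check that $w_2\cdots w_N\,\psi(s) = s\wo\,\psi(s) = \wo$ and that this product is reduced (length $N$); hence $(w_2, \ldots, w_N, \psi(s))$ is a reduced word for $\wo$.

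Then comes the main step. By Lemma~\ref{lem:prefix}(i), applied to the prefix $\woc$ of $\c^\infty$, the word $\mathbf p := (w_2, \ldots, w_N)$ is a prefix of $(\c')^\infty$ up to commutations; being a nonempty such prefix, it may be presented, up to commutations, with an initial letter of $c'$ in front (the leftmost letter of any reduced word for a Coxeter element is initial in it). I then invoke Lemma~\ref{lem:prefix}(iii) with the Coxeter element $c'$, the prefix $\mathbf p$, and the letter $s' = \psi(s)$: since $w_2\cdots w_N\,\psi(s) = \wo$ is reduced, $\mathbf p$ is a prefix, up to commutations, of the $c'$-sorting word of $(s\wo)\,\psi(s) = \wo$. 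As this sorting word is a reduced word for $\wo$ and therefore has length $N$, while $\mathbf p$ has length $N-1$, the $c'$-sorting word of $\wo$ equals $(w_2, \ldots, w_N, x)$ up to commutations for a single letter $x$; comparing the underlying group elements yields $w_2\cdots w_N\,x = \wo = w_2\cdots w_N\,\psi(s)$, forcing $x = \psi(s)$, which is the claim.

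The main obstacle I anticipate is the careful invocation of Lemma~\ref{lem:prefix}(iii) after replacing $c$ by $c' = scs$: one must justify that $\mathbf p$ may legitimately be written (up to commutations) beginning with a letter initial in $c'$ so that the lemma's hypothesis is met, and keep precise track of lengths so that appending $\psi(s)$ produces a reduced word of the correct length and pins the last letter down uniquely. Once the identity $s\wo = \wo\,\psi(s)$ is in hand, the remaining manipulations are purely formal.
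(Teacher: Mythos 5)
Your proof is correct and follows essentially the same route as the paper's: Lemma~\ref{lem:prefix}(i) to transport $(w_2,\ldots,w_N)$ to a prefix of $(\mathbf{c'})^\infty$, the identity $s\wo=\wo\,\psi(s)$ to see that $(w_2,\ldots,w_N,\psi(s))$ is reduced for $\wo$, and Lemma~\ref{lem:prefix}(iii) together with the length count that forces the final letter (a count the paper leaves implicit, so spelling it out is fine). One small point you should make explicit: the hypothesis of Lemma~\ref{lem:prefix} requires $\woc$ to be a prefix of $\c^\infty$ up to commutations, which is not part of the definition of the $c$-sorting word (a lexicographically first subword need not be a prefix) but is exactly Lemma~\ref{le:speyer} combined with Lemma~\ref{lem:prefix}(ii), which is how the paper opens its proof.
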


\begin{proof}
  By Lemma~\ref{le:speyer}, the element $\wo$ can be written as a prefix of $\c^\infty$. By Lemma~\ref{lem:prefix}, this prefix is equal to the $c$-sorting of $\wo$, which we denote by $\woc$. Let ${\bf scs}$ denote the word for the Coxeter element scs. By Lemma~\ref{lem:prefix} $(i)$, the word $(w_2,\dots,w_N)$ is a prefix of $({\bf scs})^\infty$ and by~$(ii)$ it is the $scs$-sorting word for $w_2\cdots w_N$. By definition of $\psi$, the word $(w_2,\dots,w_N, \psi(s))$ is a reduced expression for $\wo$. Lemma~\ref{lem:prefix} $(iii)$ with the word $(w_2,\dots, w_N)$ and $\psi(s)$ implies that $(w_2,\dots, w_N, \psi(s))$ is the $scs$-sorting word for $\wo$ up to commutations.
\end{proof}

\begin{remark}
  In \cite{reading_sortable_2011}, N.~Reading and D.~Speyer present a uniform approach to the theory of sorting words and sortable elements. This approach uses an anti-symmetric bilinear form which is used to extend many results to infinite Coxeter groups. In particular, the previous proposition can be easily deduced from \cite[Lemma 3.8]{reading_sortable_2011}.
\end{remark}

We are now in the position to prove that all multi-cluster complexes for the various Coxeter elements are isomorphic. 

\begin{proof}[Proof of Theorem~\ref{theorem:independent_of_c}]
  Let $c$ and $c'$ be two Coxeter elements such that $c'=scs$ for some initial letter $s$ of $c$, and let ${\bf c}$ and ${\bf c'}$ denote reduced words for $c$ and $c'$, respectively. Moreover,   let $Q_c=\c^k\woc$, and $Q_{c'} = ({\bf c'})^k\woword({\bf c'})$. By Proposition~\ref{pr:up to commutations}, we can assume that $Q_c = (s, c_2,\dots, c_n)^k \cdot (s, w_2,\dots, w_N)$, and by Proposition \ref{prop:init_letter}, we can also assume that $Q_{c'} = (c_2,\dots, c_n,s)^k \cdot (w_2,\dots, w_N,\psi(s))$. Therefore, 
  $Q_{c'} = \rotatedword{(Q_c)}{s}$, and Proposition~\ref{pr:rotated} implies that the subword complexes $\Delta(Q_c,\wo)$ and $\Delta(Q_{c'},\wo)$ are isomorphic. Since any two Coxeter elements can be obtained from each other by conjugation of initial letters (see \cite[Theorem 3.1.4]{geck_characters_2000}), the result follows.
\end{proof}

%%%%%%%%%%%%%%%%%%%%%%%%%%%%%%%%%%%%%%%%%%%%%%%%%%%%%%%%%%%%%%%%
%
%  Section: Proof of the case k=1
%
%%%%%%%%%%%%%%%%%%%%%%%%%%%%%%%%%%%%%%%%%%%%%%%%%%%%%%%%%%%%%%%%

\section{Proof of Theorem~\ref{th:main theorem}} \label{sec:maintheorem}

In this section, we prove that the subword complex $\Delta(\cwoc,\wo)$ is isomorphic to the $c$-cluster complex. As in Theorem~\ref{th:main theorem}, we identify letters in $\cwoc = (c_1,\ldots,c_n,w_1,\ldots,w_N)$ with almost positive roots using the bijection $\Lrc: \cwoc\ \tilde\longrightarrow\ \Phipm$ given by
$$
  \Lrc(q) =
  \begin{cases}
    -\alpha_{c_i} & \text{ if } q=c_i \text{ for some } 1 \leq i \leq n, \\
    w_1 w_2\cdots w_{i-1}(\alpha_{w_i}) & \text{ if } q=w_i \text{ for some } 1 \leq i \leq N.
  \end{cases}
$$
In~\cite{reading_clusters_2007} this map was used to establish a bijection between $c$-sortable elements and $c$-clusters. Note that under this bijection, letters of $\cwoc$ correspond to almost positive roots and subwords of $\cwoc$ correspond to subsets of almost positive roots. We use this identification to simplify several statements in this section. Observe, that 
for the particular facet $F_0$ of $\Delta(\cwoc,\wo)$ corresponding to the prefix $\c$ of $\cwoc$, we have that
$$\Lrc(q) = \r_{F_0}(q) \text{ for every } q \in \woc\subset \cwoc,$$
where $\r_{F_0}(q)$ is the root function as defined in Definition~\ref{def:rootfunction}. We interpret the two parts~$(i)$ and~$(ii)$ in the definition of $c$-compatibility (see Section~\ref{sec:cluster complexes}), in Theorem~\ref{th:1} and Theorem~\ref{th:2}. Proving these two conditions yields a proof of Theorem~\ref{th:main theorem}. The majority of this section is devoted to the proof of the initial condition. The proof of the recursive condition follows afterwards.

\subsection{Proof of condition $(i)$}

The following theorem implies that $\Delta(\cwoc,\wo)$ satisfies the initial condition.

\begin{theorem}\label{th:1}
  $\{-\alpha_s, \beta \}$ is a face of the subword complex $\Delta(\cwoc,\wo)$ if and only if $\beta \in \left(\Phi_{\langle s \rangle}\right)_{\geq -1}$.
\end{theorem}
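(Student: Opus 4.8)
The plan is to show that $\{-\alpha_s,\beta\}$ is a face exactly when $\beta$ belongs to the \emph{link} of the vertex $-\alpha_s$, and to identify that link with the almost positive roots of the parabolic subgroup $W_{\langle s\rangle}$. First I would reduce to the case where $s$ is \emph{initial} in $c$. By Theorem~\ref{theorem:independent_of_c} together with Propositions~\ref{pr:up to commutations},~\ref{pr:rotated} and~\ref{prop:init_letter}, conjugating $c$ by initial letters $s'\neq s$ yields isomorphisms of the subword complexes; under the accompanying involutions $\sigma_{s'}$ the vertex $-\alpha_s$ is fixed (since $\alpha_s\in\Delta\setminus\{\alpha_{s'}\}$), so it suffices to treat the initial case. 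Then I may assume $\c=(s,c_2,\dots,c_n)$, so the letter representing $-\alpha_s$ is the very first letter $c_1=s$ of $\cwoc$, and moreover $w_1=s$ because every simple reflection is a left descent of $\wo$. Proving the theorem amounts to identifying the vertex set of the link of $c_1$, i.e.\ the set of letters $q$ with $\{c_1,q\}$ a face.

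The negative simple roots are handled directly: for $c_i=t\neq s$, deleting $c_1$ and $c_i$ from $\cwoc$ leaves a word still containing the reduced word $\woc$ for $\wo$, so $\{c_1,c_i\}$ is always a face, matching $-\alpha_t\in(\Phi_{\langle s\rangle})_{\geq -1}$; while $\{c_1,w_1\}$ represents $\{-\alpha_s,\alpha_s\}$ and is a non-face because $\r_{F_0}(c_1)=\alpha_s=\r_{F_0}(w_1)$, so $c_1$ and $w_1$ are exchanged by a flip in $F_0=\c$ by Lemma~\ref{le:uniquevertex}. The heart of the proof is the following invariant, established by induction on the flip-distance from $F_0$ inside the star of $c_1$: \emph{for every facet $F$ containing $c_1$ and every $q\in F\setminus\{c_1\}$, the root $\r_F(q)$ lies in $\Phi_{\langle s\rangle}$}, i.e.\ has vanishing $\alpha_s$-coefficient. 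The base case holds since $\r_{F_0}(c_i)=\alpha_{c_i}\in\Phi_{\langle s\rangle}$ for $i\geq 2$. For the inductive step, a flip inside the star never moves $c_1$; by Lemma~\ref{le:uniquevertex} it exchanges a pair whose common root is $\r_F(q)\in\Phi_{\langle s\rangle}$ (induction hypothesis), so the reflection $t_q$ of Lemma~\ref{le:flip} lies in $W_{\langle s\rangle}$, and Lemma~\ref{le:flip} shows each root-function value is either unchanged or reflected by $t_q\in W_{\langle s\rangle}$, preserving membership in $\Phi_{\langle s\rangle}$. Flip-connectivity of the star follows because the link of a vertex of a spherical subword complex is again a spherical subword complex, whose facets are connected by flips.

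The invariant yields the ``only if'' direction immediately: if $w_j$ is a link vertex, choose a facet $F\ni\{c_1,w_j\}$; then $\r_F(w_j)\in\Phi_{\langle s\rangle}$, and since $F$ is reached from $F_0$ by star-flips whose reflections lie in $W_{\langle s\rangle}$, Lemma~\ref{le:flip} gives $\r_F(w_j)=g\big(\Lrc(w_j)\big)$ with $g\in W_{\langle s\rangle}$; as $W_{\langle s\rangle}$ preserves $\Phi_{\langle s\rangle}$, we get $\Lrc(w_j)\in\Phi_{\langle s\rangle}\cap\Phi^+$. For the converse, the invariant shows that the star of $c_1$, with $c_1$ deleted, is governed entirely by $W_{\langle s\rangle}$-reflections, with $(c_2,\dots,c_n)$ in the role of the parabolic Coxeter element $c_{\langle s\rangle}=c_2\cdots c_n$; this identifies the link of $c_1$ with the $c_{\langle s\rangle}$-cluster complex of $W_{\langle s\rangle}$. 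The completeness step — that \emph{every} $\beta\in(\Phi_{\langle s\rangle})_{\geq -1}$ actually occurs as a link vertex — I would then obtain by induction on the rank, applying the rank-$(n-1)$ case of Theorem~\ref{th:main theorem} to $W_{\langle s\rangle}$.

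I expect the main obstacle to be precisely this completeness step, i.e.\ making the identification of the link of $-\alpha_s$ with the $c_{\langle s\rangle}$-cluster complex of $W_{\langle s\rangle}$ fully rigorous so that the induction closes. The subtlety is that one cannot shortcut it by length or Demazure bookkeeping: deleting the single letter $w_j$ from the reduced word $\woc$ need not leave a reduced word, so $\delta(\cwoc\setminus\{c_1,w_j\})$ is not controlled by a naive computation. The correct bridge is the root-function invariant above, which transfers the flip dynamics of the star into the parabolic group $W_{\langle s\rangle}$ and lets the induction on rank finish the argument.
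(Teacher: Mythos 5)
Your architecture tracks the paper's proof closely: your root-function invariant is precisely Lemma~\ref{lem:facet_ci}$(i)$, proved the same way (induction along flips through facets containing the distinguished letter, base case $F_0=\c$, with the exchanged reflection $t_q\in W_{\langle s\rangle}$ via Lemmas~\ref{le:uniquevertex} and~\ref{le:flip}, and flip-connectivity of the star coming from the link being a spherical subword complex), and your ``only if'' direction is Proposition~\ref{prop:11} together with Lemma~\ref{lem:facet_ci}$(ii)$. Two smaller remarks. First, your reduction to $s$ initial in $c$ is unnecessary: the paper runs the whole argument at an arbitrary position $c_i$ of $\c$, so no $\sigma_{s'}$-equivariance bookkeeping is needed. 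Second, your treatment of $\{c_1,w_1\}$ is not by itself a proof of non-face-ness: that $c_1$ and $w_1$ are exchanged by a flip in $F_0$ does not a priori exclude their lying together in some other facet. This is harmless only because the invariant, once established, subsumes it (any facet $F\ni c_1$ has $\r_F(q)\in\Phi_{\langle s\rangle}$ for $q\neq c_1$, while $\Lrc(w_1)=\alpha_s\notin\Phi_{\langle s\rangle}$), which is exactly how the paper excludes it.

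The genuine gap is the one you flag yourself: the identification of the link of $c_1$ with the parabolic complex is asserted, not proved, and it is the technical heart of the ``if'' direction. The paper supplies it as Lemma~\ref{lem:parabolic}: writing $\widehat Q$ for $\cwoc$ with the single letter $c_i$ deleted and $Q'={\bf c'}\woword({\bf c'})$ for the rank-$(n-1)$ multi-cluster word, one defines $\varphi$ by matching letters carrying the same almost positive root (Proposition~\ref{prop:11} guarantees the remaining letters of $\widehat Q$ are not vertices), and proves that $F\mapsto\varphi(F)$ is an isomorphism by propagating the equality $\widetilde\r_F(q)=\r'_{\varphi(F)}(\varphi(q))$ along flips. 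The inductive step of that propagation requires a nontrivial external input for which your sketch has no substitute: the positive roots of $\Phi_{\langle s\rangle}$ appear \emph{in the same order} in $\widehat Q$ and in $Q'$, see \cite[Proposition~3.2]{reading_clusters_2007}. Without this order compatibility the flip dynamics in the link need not mirror those of $\Delta(Q',w'_\circ)$, and your induction cannot close. Note also that once Lemma~\ref{lem:parabolic} is in place, the paper avoids your rank induction via Theorem~\ref{th:main theorem} altogether: completeness (every $\beta\in\left(\Phi_{\langle s\rangle}\right)_{\geq -1}$ is a link vertex) follows from the elementary Lemma~\ref{lem:12} --- since $\wo$ is $c$-sortable, $\woc=\c_{K_1}\cdots\c_{K_r}$ with $K_r\subseteq\cdots\subseteq K_1$, so deleting any block $\c_{K_i}$ still leaves a reduced word for $\wo$, whence every letter of $Q'$ lies in some facet of $\Delta(Q',w'_\circ)$. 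Your scheme (Theorem~\ref{th:1} at rank $n$ from Theorem~\ref{th:main theorem} at rank $n-1$) would be non-circular in principle, but it is strictly heavier machinery than needed and still hinges entirely on the unproven identification.
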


We prove this theorem in several steps.

\begin{lemma}\label{lem:facet_ci}
Let $F$ be a facet of the subword complex $\Delta(\cwoc,\wo)$ such that $c_i\in F$. Then
\begin{enumerate}[(i)]
\item for every $q\in F$ with $q\neq c_i$, $r_F(q)\in \Phi_{\langle c_i \rangle}$.
\item for every $q \in \cwoc$, $r_F(q)\in \Phi_{\langle c_i \rangle}$ if and only if $\Lrc(q)\in (\Phi_{\langle c_i \rangle})_{\geq -1}$.
\end{enumerate}
\end{lemma}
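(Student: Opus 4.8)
The plan is to prove both parts simultaneously by induction over the facets containing $c_i$, propagating the statement along flips. First I would observe that the facets of $\Delta(\cwoc,\wo)$ containing the fixed vertex $c_i$ are exactly the facets of the star of $c_i$, hence correspond to the facets of the link of $c_i$; this link is itself a subword complex, obtained by deleting the position $c_i$ from $\cwoc$. Since subword complexes are vertex-decomposable simplicial balls or spheres \cite{knutson_subword_2004}, their facet--ridge graphs are connected, so any two facets of $\Delta(\cwoc,\wo)$ containing $c_i$ are joined by a sequence of flips none of which removes $c_i$ (cf.\ Lemma~\ref{le:facets}). It therefore suffices to verify (i) and (ii) for one facet and to show both are preserved under a flip that fixes $c_i$. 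To organize this, I would introduce the fixed subset $A=\{q\in\cwoc : \Lrc(q)\in\left(\Phi_{\langle c_i \rangle}\right)_{\geq -1}\}$; as $\Lrc$ is independent of $F$, statement (ii) reads $\{q:\r_F(q)\in\Phi_{\langle c_i\rangle}\}=A$, while (i) reads $F\setminus\{c_i\}\subseteq\{q:\r_F(q)\in\Phi_{\langle c_i\rangle}\}$.

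For the base case I would take the prefix facet $F_0=\{c_1,\dots,c_n\}$, using the identity $\r_{F_0}(q)=\Lrc(q)$ for $q\in\woc$ recalled before Theorem~\ref{th:1}. Here $\r_{F_0}(q)=\Lrc(q)$ is a positive root for $q\in\woc$ (these are the inversions of $\wo$), while $\r_{F_0}(c_j)=\alpha_{c_j}$ and $\Lrc(c_j)=-\alpha_{c_j}$. The key elementary observation is that for a positive root $\beta$ one has $\beta\in\left(\Phi_{\langle c_i\rangle}\right)_{\geq -1}\iff\beta\in\Phi_{\langle c_i\rangle}$, so (ii) holds on all $w_j$; for the initial letters, $j\neq i$ gives $\alpha_{c_j}\in\Phi_{\langle c_i\rangle}$ and $-\alpha_{c_j}\in\left(\Phi_{\langle c_i\rangle}\right)_{\geq -1}$, whereas $j=i$ gives $\alpha_{c_i}\notin\Phi_{\langle c_i\rangle}$ and $-\alpha_{c_i}\notin\left(\Phi_{\langle c_i\rangle}\right)_{\geq -1}$. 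This yields (ii) for $F_0$, and since every $c_j\in F_0\setminus\{c_i\}$ satisfies $\r_{F_0}(c_j)=\alpha_{c_j}\in\Phi_{\langle c_i\rangle}$, also (i).

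For the inductive step, let $F'=(F\setminus\{q\})\cup\{q'\}$ be a flip with $q,q'\neq c_i$. Since $q\in F\setminus\{c_i\}$, part (i) for $F$ gives $\r_F(q)\in\Phi_{\langle c_i\rangle}$, so the reflection $t_q$ orthogonal to $\r_F(q)$ appearing in Lemma~\ref{le:flip} lies in the reflection subgroup $W_{\langle c_i\rangle}$ and preserves $\Phi_{\langle c_i\rangle}$. By Lemma~\ref{le:flip}, each $\r_{F'}(p)$ equals $\r_F(p)$ or $t_q(\r_F(p))$, and in both cases $\r_{F'}(p)\in\Phi_{\langle c_i\rangle}\iff\r_F(p)\in\Phi_{\langle c_i\rangle}$. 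Hence $\{p:\r_{F'}(p)\in\Phi_{\langle c_i\rangle}\}=\{p:\r_F(p)\in\Phi_{\langle c_i\rangle}\}=A$, giving (ii) for $F'$. For (i), a vertex $p\in F'\setminus\{c_i\}$ either lies in $F$, in which case it is handled by (i) for $F$ together with the displayed equivalence, or equals $q'$; in the latter case $\r_F(q')=\pm\r_F(q)\in\Phi_{\langle c_i\rangle}$ by Lemma~\ref{le:uniquevertex}, so $\r_{F'}(q')\in\Phi_{\langle c_i\rangle}$ as well. This closes the induction.

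The main obstacle, and the reason the two parts cannot be proved in sequence, is precisely this interdependence: the flip argument propagating (ii) requires that $t_q$ preserve the parabolic subsystem $\Phi_{\langle c_i\rangle}$, which is exactly the content of (i) applied to the outgoing vertex $q$. The statements must therefore be carried together through the induction. A secondary point needing care is the claim that all facets containing $c_i$ are connected through flips fixing $c_i$; I would justify this by identifying the link of $c_i$ with a subword complex and invoking connectivity of its facet--ridge graph.
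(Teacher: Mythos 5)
Your proof is correct and follows essentially the same route as the paper's: induction along flips fixing $c_i$ starting from the prefix facet $\c$, using Lemma~\ref{le:uniquevertex} for the incoming vertex and Lemma~\ref{le:flip} with $t_q \in W_{\langle c_i\rangle}$ to propagate both statements, with part (ii) obtained from the invariance of $\{q : \r_F(q)\in\Phi_{\langle c_i\rangle}\}$ evaluated at $F=\c$. The only difference is presentational: you make explicit two points the paper leaves implicit, namely the flip-connectivity of facets containing $c_i$ (via the link being a shellable subword complex) and the detailed verification of the base case.
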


\begin{proof}
For the proof of $(i)$ notice that if $F=\c$ then the result is clear. 
Now suppose the result is true for a given facet $F$ with $c_i\in F$, and consider the facet $F' = \big(F \setminus \{p\}\big)\cup\{p'\}$ obtained by flipping a letter $p\neq c_i$ in $F$.
Since all the facets containing $c_i$ are connected by flips which do not involve the letter $c_i$, then it is enough to prove the result for the facet $F'$.
By hypothesis, since $p\in F$ and $p\neq c_i$ then $r_F(p)\in \Phi_{\langle c_i \rangle}$. Then, the reflection $t_p$ orthogonal to $r_F(p)$ defined in Lemma \ref{le:flip} satisfies $t_p\in W_{\langle c_i \rangle}$. Using Lemma \ref{le:flip} we obtain that for every $q \in \cwoc$,
\[
r_{F'}(q)\in \Phi_{\langle c_i \rangle} \Leftrightarrow r_F(q) \in \Phi_{\langle c_i \rangle}.
\]   
If $q\in F'$ and $q\neq c_i$ then $(q\in F \text{ and } q\neq c_i)$ or $q=p'$. In the first case, $r_F(q)$ is contained in $\Phi_{\langle c_i \rangle}$ by hypothesis, and consequently $r_{F'}(q) \in \Phi_{\langle c_i \rangle}$. By Lemma \ref{le:uniquevertex}, the second case $q=p'$ implies that $r_F(q)=\pm r_F(p)$. Again since $r_F(p)$ belongs to $\Phi_{\langle c_i \rangle}$ by hypothesis, the root $r_{F'}(q)$ belongs to $\Phi_{\langle c_i \rangle}$.

For the second part of the lemma, notice that the set $\{ q \in \cwoc: r_F(q) \in \Phi_{\langle c_i \rangle} \}$ is invariant for every facet $F$ containing $c_i$. In particular, if $F= \c$ this set is equal to $\{q \in \cwoc: \Lrc(q) \in (\Phi_{\langle c_i \rangle})_{\geq -1} \}$.  Therefore, $r_F(q)\in \Phi_{\langle c_i \rangle}$ if and only if $\Lrc(q) \in (\Phi_{\langle c_i \rangle})_{\geq -1}$.
\end{proof}

\begin{proposition}\label{prop:11}
  If a facet $F$ of $\Delta(\cwoc,\wo)$ contains $c_i$ and $q\neq c_i$, then $\Lrc(q)\in (\Phi_{\langle c_i \rangle})_{\geq -1}$.
\end{proposition}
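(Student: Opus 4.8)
The plan is to derive this proposition directly from Lemma~\ref{lem:facet_ci}, which already carries all the substantive content. The statement asserts that whenever a facet $F$ contains the letter $c_i$, every other vertex $q\in F$ satisfies $\Lrc(q)\in(\Phi_{\langle c_i \rangle})_{\geq -1}$, so the natural strategy is a two-step translation through the root function $r_F$: first land inside the parabolic subroot system $\Phi_{\langle c_i \rangle}$, then convert that into the almost-positive statement about $\Lrc(q)$.

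First I would invoke Lemma~\ref{lem:facet_ci}$(i)$: since $q\in F$ and $q\neq c_i$, the root $r_F(q)$ lies in $\Phi_{\langle c_i \rangle}$. Then I would apply Lemma~\ref{lem:facet_ci}$(ii)$ to the same letter $q\in\cwoc$, which turns the membership $r_F(q)\in\Phi_{\langle c_i \rangle}$ into the desired $\Lrc(q)\in(\Phi_{\langle c_i \rangle})_{\geq -1}$. Chaining the two implications yields the conclusion with no further computation; the proposition is thus an immediate corollary of the lemma.

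At the level of this proposition there is essentially no obstacle, since the difficulty has been absorbed into Lemma~\ref{lem:facet_ci}. The genuinely hard part lives one level earlier, in part $(i)$ of that lemma, namely that the ``parabolic'' property of root functions propagates across flips. I expect that argument to run by induction on the flip distance from the base facet $\c$: in the base case the roots $r_{\c}(q)$ are read off directly, while for the inductive step one uses Lemma~\ref{le:flip} to observe that flipping a letter $p\neq c_i$ acts by the reflection $t_p\in W_{\langle c_i \rangle}$, which stabilizes $\Phi_{\langle c_i \rangle}$; one also needs that all facets containing $c_i$ are joined by flips avoiding $c_i$, which follows from Lemma~\ref{le:facets}. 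Part $(ii)$ is then an invariance comparison with the base facet $\c$. Granting these, the proof of Proposition~\ref{prop:11} reduces to the one-line composition above.
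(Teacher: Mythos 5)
Your proof is correct and matches the paper's own argument: the paper also derives Proposition~\ref{prop:11} as a direct consequence of Lemma~\ref{lem:facet_ci}, chaining part~$(i)$ into part~$(ii)$ exactly as you do. Your sketch of how Lemma~\ref{lem:facet_ci}$(i)$ propagates across flips via the reflection $t_p\in W_{\langle c_i\rangle}$ likewise mirrors the paper's proof of that lemma.
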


\begin{proof}
  This proposition is a direct consequence of Lemma~\ref{lem:facet_ci}. 
\end{proof}

Next, we consider the parabolic subgroup $W_{\langle c_i \rangle}$ obtained by removing the generator $c_i$ from $S$.

\begin{lemma}\label{lem:parabolic}
  Let $c'$ be the Coxeter element of the parabolic subgroup $W_{\langle c_i \rangle}$ obtained from $c$ by removing the generator 
  $c_i$. Consider the word $\widehat Q = {\bf c' \wo}(\c)$ obtained by deleting the letter $c_i$ from $Q=\cwoc$, and let 
  $Q'={\bf c'w}_\circ ({\bf c'})$. Then, the subword complexes $\Delta(\widehat Q,\wo)$ and $\Delta(Q',w'_\circ)$ are isomorphic.
\end{lemma}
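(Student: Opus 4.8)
The plan is to match the two complexes after first moving the group element defining $\Delta(Q',w'_\circ)$ from $w'_\circ$ up to $\wo$, and then to recognize the two resulting words for $\wo$ as windows of the same bi-infinite word. First observe that $\delta(\widehat Q)=\wo$: the tail $\woc$ of $\widehat Q$ is already a reduced word for $\wo$, so $\wo$ occurs as a subword product and hence equals $\delta(\widehat Q)$ by maximality. Next, since $W_{\langle c_i\rangle}$ is a \emph{standard} parabolic subgroup, reduced words of its elements inside a word in the alphabet $S\setminus\{c_i\}$ agree with reduced words computed in $W$; thus $\Delta(Q',w'_\circ)$ may be regarded as a subword complex of $W$. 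Set $N'=\ell(w'_\circ)$. Since $\wo$ is the top of the left weak order, $w'_\circ$ is a left divisor of $\wo$, so $\ell\big((w'_\circ)^{-1}\wo\big)=N-N'$; fix a reduced word $\mathbf r$ for $(w'_\circ)^{-1}\wo$. By Theorem~\ref{th:subword complexes for w0}, $\Delta(Q',w'_\circ)\cong\Delta(Q'\mathbf r,\wo)$, and a length count gives $|Q'\mathbf r|=(n-1)+N=|\widehat Q|$.

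It therefore suffices to prove that $\widehat Q$ and $Q'\mathbf r$ determine the same spherical subword complex for $\wo$. By Propositions~\ref{pr:up to commutations} and~\ref{pr:rotated}, the complex $\Delta(\,\cdot\,,\wo)$ depends only on the associated bi-infinite word, so I would show that, for a suitable choice of $\mathbf r$, the words $\widehat Q$ and $Q'\mathbf r$ are windows of a common bi-infinite word, i.e.\ coincide after commutations and rotations. A useful preliminary reduction, using Theorem~\ref{theorem:independent_of_c}, is to bring $c_i$ into initial position in $c$ (the vertex $c_i$, and hence its link, is transported along the explicit rotation isomorphisms), so that $\c'=(c_2,\dots,c_n)$ is simply $\c$ with its first letter removed.

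The heart of the matter is the following restriction identity for sorting words. Reading off those letters $w_j$ of $\woc$ whose root $\Lrc(w_j)=w_1\cdots w_{j-1}(\alpha_{w_j})$ lies in $\Phi_{\langle c_i\rangle}$ yields, up to commutations, the $c'$-sorting word $\woword(\c')$ of $w'_\circ$ in $W_{\langle c_i\rangle}$, while the complementary $N-N'$ letters, namely those whose root lies outside $\Phi_{\langle c_i\rangle}$, constitute a reduced word for the minimal coset representative $\wo(w'_\circ)^{-1}$. This is precisely the compatibility of Reading's $c$-sorting words with standard parabolic subgroups, and I would deduce it from the sortable-element machinery of Section~\ref{sec:sorting} together with Reading's results. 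The base facet $F_0'=\c'$ of $\Delta(\widehat Q,\wo)$ is the natural bridge here: its complement is $\woc$, so $\r_{F_0'}(w_j)=\Lrc(w_j)$ on the tail (as in Definition~\ref{def:rootfunction}), which is exactly what ties the support condition on roots to the flip structure, and Lemma~\ref{lem:facet_ci} already records the relevant invariance of $\{\,q:\r_F(q)\in\Phi_{\langle c_i\rangle}\,\}$. Granting the identity, the complementary letters are exactly the letters forced into every reduced expression of $\wo$ inside $\widehat Q$ (equivalently, the non-vertices of $\Delta(\widehat Q,\wo)$); the retained tail letters reassemble $\woword(\c')$; and prepending $\c'$ and passing to the bi-infinite word identifies $\widehat Q$ with $Q'\mathbf r$, finishing the proof.

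The main obstacle is this restriction identity: one must control how the single $c$-sorting word of $\wo$ decomposes along the support filtration induced by $\Phi_{\langle c_i\rangle}$, verifying simultaneously that the retained letters recover the parabolic sorting word $\woword(\c')$ and that the discarded letters form a reduced word for the coset representative $\wo(w'_\circ)^{-1}$ sitting in a position that can be rotated to the end. Everything else is formal: once the identity is in hand, the isomorphism $\Delta(\widehat Q,\wo)\cong\Delta(Q',w'_\circ)$ follows from Theorem~\ref{th:subword complexes for w0} together with Propositions~\ref{pr:up to commutations} and~\ref{pr:rotated}.
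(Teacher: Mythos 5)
Your overall scaffolding is sound: $\delta(\widehat Q)=\wo$ holds since $\widehat Q$ contains the reduced word $\woc$; viewing $\Delta(Q',w'_\circ)$ inside $W$ is legitimate because reduced words in a standard parabolic subgroup agree with those in $W$; and the passage to $\Delta(Q'\mathbf r,\wo)$ via Theorem~\ref{th:subword complexes for w0}, with the length count $|Q'\mathbf r|=|\widehat Q|$, is correct. But the step you yourself identify as the heart of the proof --- the ``restriction identity'' --- is false as stated, and it is exactly where a letters-versus-roots confusion enters. Take $W$ of type $A_2$, $c=s_1s_2$, $c_i=s_1$, so $\woc=(s_1,s_2,s_1)$ with root sequence $\alpha_1,\ \alpha_1+\alpha_2,\ \alpha_2$ and $\Phi_{\langle s_1\rangle}=\{\pm\alpha_2\}$. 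The unique letter of $\woc$ whose root lies in $\Phi_{\langle s_1\rangle}$ is the \emph{final} $s_1$, so the retained subword is $(s_1)$, which is not equal up to commutations to $\woword({\bf c'})=(s_2)$; no sequence of commutations changes a letter's name. (A secondary symptom: the complementary letters spell $s_1s_2=(w'_\circ)^{-1}\wo$, not $\wo(w'_\circ)^{-1}=s_2s_1$ as you claim.) The underlying true statement --- the one the paper actually uses, citing Reading --- is at the level of \emph{roots}: the almost positive roots of $(\Phi_{\langle c_i\rangle})_{\geq -1}$ occur in the same order under $\Lrc$ within $\cwoc$ as under $\Lrvar{c'}$ within $Q'$. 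Roots are transported correctly under rotation and conjugation while letter names are twisted by $\psi$, which is precisely why a word-level identity of the kind you assert cannot hold in general, and why your final assembly (``the retained tail letters reassemble $\woword({\bf c'})$\dots identifies $\widehat Q$ with $Q'\mathbf r$'') is unsupported. Note also that you cannot infer the window equivalence from the isomorphism of the two complexes: being windows of a common bi-infinite word is strictly stronger than isomorphism, so it would require its own proof, which the proposal does not supply.

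For contrast, the paper never attempts a commutation/rotation equivalence between $\widehat Q$ and an extension of $Q'$. Instead it works entirely at the level of roots: by Proposition~\ref{prop:11}, the only letters of $\widehat Q$ that are vertices of $\Delta(\widehat Q,\wo)$ are those whose root lies in $(\Phi_{\langle c_i\rangle})_{\geq -1}$; these are in bijection, via matching roots, with the letters of $Q'$, giving a map $\varphi$. The paper then verifies the identity $\widetilde\r_F(q)=\r'_{\varphi(F)}(\varphi(q))$ of root functions on the base facet ${\bf c'}$ and propagates it along flips using Lemmas~\ref{le:uniquevertex} and~\ref{le:flip} together with the same-order fact above, concluding by flip-connectivity that $\varphi$ sends facets to facets. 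If you want to rescue your route, the realistic path is to prove your window claim directly (it does appear to hold in small examples, e.g.\ in type $A_3$ the required window of $\widehat Q\,\psi(\widehat Q)$ exists), but any such proof will have to track how $\psi$ and conjugation rename letters --- in effect redoing the root-function bookkeeping that the paper's argument performs --- so as written there is a genuine gap.
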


\begin{proof}
  Since every facet $F$ of $\Delta(\widehat Q, \wo)$ can be seen as a facet $F \cup \{c_i\}$ of $\Delta(\cwoc,\wo)$ which contains $c_i$, then for every $q\in F$ we have that $\Lrc (q)\in (\Phi_{\langle c_i \rangle})_{\geq -1}$ by Proposition \ref{prop:11}. This means that only the letters of $\widehat Q$ that correspond to roots in $(\Phi_{\langle c_i \rangle})_{\ge -1}$ appear in the subword complex $\Delta(\widehat Q,\wo)$. The letters in $Q'$ are in bijection, under the map $\Lrvar{c'}$, with the almost positive roots $(\Phi_{\langle c_i \rangle})_{\ge -1}$. Let $\varphi$ be the map that sends a letter $q\in \widehat Q$ corresponding to a root in $(\Phi_{\langle c_i \rangle})_{\ge -1}$ to the letter in $Q'$ corresponding to the same root. We will prove that $\varphi$ induces an isomorphism between the subword complexes $\Delta(\widehat Q,\wo)$ and $\Delta(Q',w'_\circ)$. In other words, we show that $F$ is a facet of $\Delta(\widehat Q, \wo)$ if and only if $\varphi (F)$ is a facet of $\Delta(Q',\wo ')$. Let $\widetilde \r_F$ and $\r'_{\varphi(F)}$ be the root functions associated to $F$ and $\varphi(F)$ in  $\widehat Q$ and $Q'$ respectively. Then, for every $q\in \widehat Q$ such that $\Lrc(q)\in (\Phi_{\langle c_i \rangle})_{\ge -1}$, we have 
  \begin{align}
    \widetilde \r_F(q)=\r '_{\varphi (F)} (\varphi (q) ). \label{eq:equality}\tag{$\star$}
  \end{align}
  If $F={\bf c'}$ then $\varphi(F)={\bf c'}$ and the equality~\eqref{eq:equality} holds by the definition of $\varphi$. Moreover, if \eqref{eq:equality} holds for a facet $F$ then it is true for a facet $F'$ obtained by flipping a letter in $F$. This follows by applying Lemma \ref{le:flip} and using the fact that the positive roots $(\Phi_{\langle c_i \rangle})_{\ge -1}$ in $\widehat Q$ and $Q'$ appear in the same order, see \cite[Prop. 3.2]{reading_clusters_2007}. Finally, Lemma \ref{le:uniquevertex} and \eqref{eq:equality} imply that the map $\varphi$ sends flips to flips. Since ${\bf c'}$ and $\varphi({\bf c'}) $ are facets of $\Delta(\widehat Q, \wo)$ and $\Delta(Q',\wo ')$ respectively, and all facets are connected by flips, $F$ is a facet of $\Delta(\widehat Q, \wo)$ if and only if $\varphi (F)$ is a facet of $\Delta(Q',\wo ')$. 
\end{proof}

The next lemma states that every letter in $\cwoc$ is indeed a vertex of $\Delta(\cwoc,\wo)$.

\begin{lemma}\label{lem:12}
  Every letter in $\cwoc$ is contained in some facet of $\Delta(\cwoc,\wo)$. 
\end{lemma}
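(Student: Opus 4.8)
The plan is to reformulate the statement facewise and then produce, for each position, an explicit facet containing it. Since $\woc$ is already a reduced word for $\wo$ inside $\cwoc$ and $\wo$ is Bruhat-maximal, we have $\delta(\cwoc)=\wo$, so $\Delta(\cwoc,\wo)$ is a sphere by \cite[Corollary~3.8]{knutson_subword_2004}; in particular it is pure, and a letter lies in some facet exactly when it is a vertex, i.e. when its complement in $\cwoc$ still contains a reduced word for $\wo$. For the initial block this is immediate: the complement of $F_0=\{c_1,\dots,c_n\}=\c$ is precisely $\woc$, a reduced word for $\wo$, so $F_0$ is a facet and each $c_i$ is a vertex. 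The real task is to place every letter $w_j$ of the $c$-sorting part into a facet.

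First I would show that rotation turns the problem into a sweeping argument. Writing $\cwoc=(q_1,\dots,q_{n+N})$ with initial letter $s=c_1$, the rotation of Proposition~\ref{pr:rotated} gives $\rotatedword{\cwoc}{s}=(c_2,\dots,c_n,w_1,\dots,w_N,\psi(s))$. Its first $n$ letters $(c_2,\dots,c_n,w_1)$ form a reduced word for $scs$, and its last $N$ letters form, by Proposition~\ref{prop:init_letter}, the $scs$-sorting word of $\wo$, hence a reduced word for $\wo$. Thus $\rotatedword{\cwoc}{s}$ is again a multi-cluster word $\c'\woword(\c')$ whose initial block is a facet. Because the isomorphism $\Delta(\cwoc,\wo)\cong\Delta(\rotatedword{\cwoc}{s},\wo)$ fixes every non-initial letter (by position), this facet pulls back to the consecutive window $\{q_2,\dots,q_{n+1}\}$ of $\cwoc$, which is therefore a facet and exhibits $w_1=q_{n+1}$ as a vertex.

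Iterating is the heart of the argument. Since the first letter of $\c'$ is again initial in $scs$, the same step applies to $\rotatedword{\cwoc}{s}$, and after $m$ rotations ($0\le m\le N$) I obtain $\Delta(\cwoc,\wo)\cong\Delta(Q^{(m)},\wo)$ with $Q^{(m)}$ a multi-cluster word whose initial $\c$-block pulls back to the facet $\{q_{m+1},\dots,q_{m+n}\}$ of $\Delta(\cwoc,\wo)$. As $m$ ranges over $0,1,\dots,N$ these length-$n$ windows cover every position $1,\dots,n+N$, so each $w_j$ lies in one of them; hence every letter of $\cwoc$ is a vertex and lies in some facet.

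The only point demanding care is that the iteration never leaves the class of multi-cluster words: one must check that each successive conjugation by an initial letter again produces a Coxeter element whose reduced word is the current initial block, and with a new initial letter, so that Proposition~\ref{prop:init_letter} applies repeatedly to keep the $N$ complements reduced for $\wo$. This is exactly the inductive mechanism already established in the proof of Theorem~\ref{theorem:independent_of_c}, so it requires no new input, and the coverage of all positions by the windows is routine bookkeeping. I would remark that a purely parabolic approach via Lemma~\ref{lem:parabolic} cannot replace this: it only reaches letters whose root $\Lrc(q)$ lies in a proper standard parabolic subsystem, and so misses the positive roots of full support — precisely the obstruction that the rotation sweep circumvents.
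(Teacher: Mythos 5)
Your reduction to ``the complement still contains a reduced word for $\wo$'' and your treatment of the first rotation step are fine, but the induction contains a genuine gap: the claim that after $m$ rotations the initial block of the new multi-cluster word pulls back to the \emph{consecutive} window $\{q_{m+1},\dots,q_{m+n}\}$ is false in general. Proposition~\ref{prop:init_letter} only identifies the rotated suffix with the new sorting word \emph{up to commutations}, and the commutations can pull a letter from far away (even one of the appended $\psi$-images of an already-rotated letter) into the new initial block, so the tracked initial block need not be a consecutive run of original positions. Concretely, take $W=A_4$, $k=1$, $c=s_1s_2s_3s_4$, so $\cwoc=(s_1,s_2,s_3,s_4\,|\,s_1,s_2,s_3,s_4\,|\,s_1,s_2,s_3\,|\,s_1,s_2\,|\,s_1)$ with letters $q_1,\dots,q_{14}$. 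Your window for $m=8$ is $\{q_9,q_{10},q_{11},q_{12}\}$, whose complement is the $10$-letter word $(s_1,s_2,s_3,s_4,s_1,s_2,s_3,s_4,s_2,s_1)$; this represents an element of length $8$, so it is not reduced and the window is not even a face. What happens is that after rotating out the second $s_4$ (position $8$), the next initial block needs an $s_4$, but no $s_4$ occurs among positions $9$--$14$ (the later sorting blocks of $\woc$ have supports $\{s_1,s_2,s_3\}\supseteq\{s_1,s_2\}\supseteq\{s_1\}$); the needed $s_4$ is the appended $\psi(q_1)$, commuted forward, and the actual facet at that stage is the cyclic set $\{q_1,q_9,q_{10},q_{11}\}$. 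So the ``routine bookkeeping'' is precisely where the argument breaks, and with it the coverage of all positions by windows $0\le m\le N$.

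The sweep idea itself can be repaired: each rotation removes exactly one letter from the front of the (commutation-normalized) word, so within $n+N$ rotations every original letter becomes initial, and at that stage it lies in the initial-block facet; this requires tracking letters through the commutation isomorphisms of Proposition~\ref{pr:up to commutations} rather than asserting consecutiveness, and rotating up to $n+N$ times rather than $N$. For comparison, the paper's proof avoids all of this machinery: it writes $Q=\c\,\c_{K_1}\c_{K_2}\cdots\c_{K_r}$ using the $c$-factorization of $\wo$, invokes $c$-sortability of $\wo$ to get the nested chain $K_r\subseteq\cdots\subseteq K_1\subseteq S$, and observes that deleting any whole block $\c_{K_i}$ leaves a word that still contains $\c_{K_1}\cdots\c_{K_r}$ as a subword, hence a reduced expression for $\wo$; thus every letter lies in a face and therefore in a facet. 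That argument is a few lines, uses no rotation or sorting-word recursion, and --- as your own counterexample shows --- is not equivalent to the consecutive-window statement you attempted to prove.
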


\begin{proof}
  Write the word $Q=\cwoc$ as the concatenation of $\c$ and the $c$-factorization of $\wo$, 
  i.e., $Q= \c \c_{K_1} \c_{K_2} \cdots \c_{K_r}$, where $K_i\subseteq S$ for $1\leq i\leq r$ and $c_I$, with $I\subseteq S$, 
  is the Coxeter element of~$W_I$ obtained from $c$ by keeping only letters in $I$. Since $\wo$ is $c$-sortable, see \cite[Corollary 4.4]{reading_clusters_2007}, 
  the sets $K_i$ form a decreasing chain of subsets of $S$, i.e., $K_r\subseteq K_{r-1} \subseteq \cdots \subseteq 
  K_1 \subseteq S$. This implies that the word $\c \c_{K_1} \ldots \widehat \c_{K_i} \ldots \c_{K_r}$ contains a reduced 
  expression for $\wo$ for any $1\leq i \leq r$. Thus, all letters in $\c_{K_i}$ are indeed vertices.
\end{proof}

\begin{proposition}\label{prop:c_i_and_q}
For every $q \in \cwoc$ satisfying $\Lrc(q) \in (\Phi_{\langle c_i \rangle})_{\geq -1}$, there exists a facet of $\Delta(\cwoc, \wo)$ that contains both $c_i$ and $q$.
\end{proposition}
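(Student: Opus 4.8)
The plan is to reduce the claim to the parabolic subword complex of Lemma~\ref{lem:parabolic} and then to invoke Lemma~\ref{lem:12} for the smaller group. Recall from that proof that deleting the single letter $c_i$ from $\cwoc$ produces the word $\widehat Q={\bf c'}\woword(\c)$, that every facet $\widehat F$ of $\Delta(\widehat Q,\wo)$ gives a facet $\widehat F\cup\{c_i\}$ of $\Delta(\cwoc,\wo)$ containing $c_i$, and that there is an isomorphism $\varphi\colon\Delta(\widehat Q,\wo)\to\Delta(Q',w'_\circ)$, where $Q'={\bf c'}\woword({\bf c'})$ is the corresponding word for the parabolic subgroup $W_{\langle c_i\rangle}$ and $\varphi$ matches letters carrying the same almost positive root. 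Hence it suffices to exhibit a facet of $\Delta(\widehat Q,\wo)$ containing~$q$.

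First I would check that $q\neq c_i$. Since $\Lrc(c_i)=-\alpha_{c_i}$ does not belong to $(\Phi_{\langle c_i\rangle})_{\geq -1}$ while $\Lrc(q)$ does, and since $\Lrc$ is a bijection, the letters $q$ and $c_i$ are distinct; in particular $q$ survives the deletion and is a letter of $\widehat Q$. Because $\Lrc(q)\in(\Phi_{\langle c_i\rangle})_{\geq -1}$, the letter $q$ lies in the domain of $\varphi$, and $\varphi(q)$ is the letter of $Q'$ associated under $\Lrvar{c'}$ with the same root $\Lrc(q)$.

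Next I would apply Lemma~\ref{lem:12} to the parabolic Coxeter system $(W_{\langle c_i\rangle},S\setminus\{c_i\})$ with Coxeter element $c'$. As $Q'={\bf c'}\woword({\bf c'})$ is precisely a word of the form $\cwoc$ for this smaller system, Lemma~\ref{lem:12} guarantees that every letter of $Q'$, and in particular $\varphi(q)$, lies in some facet $F'$ of $\Delta(Q',w'_\circ)$. Transporting $F'$ back through $\varphi^{-1}$ yields a facet $\widehat F=\varphi^{-1}(F')$ of $\Delta(\widehat Q,\wo)$ containing $q$, and then $\widehat F\cup\{c_i\}$ is a facet of $\Delta(\cwoc,\wo)$ containing both $c_i$ and $q$, as desired.

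The step I expect to require the most care is not a new argument but the bookkeeping that lets $\varphi$ act on $q$: one must be sure that the almost positive root attached to $q$ as a letter of $\widehat Q$ is exactly $\Lrc(q)$, including the convention by which the prefix letters $c_j$ (for $j\neq i$) correspond to the negative simple roots $-\alpha_{c_j}$ rather than to $+\alpha_{c_j}$. This compatibility is already built into the equality~\eqref{eq:equality} in the proof of Lemma~\ref{lem:parabolic}, and the facet sizes match automatically since $|\cwoc|-\ell(\wo)=n$ forces $|\widehat F\cup\{c_i\}|=n$. Conceptually, the proposition asserts that the vertex $c_i$ shares a facet with every almost positive root of the parabolic system $\Phi_{\langle c_i\rangle}$, which is exactly the converse of Proposition~\ref{prop:11} needed to complete the proof of Theorem~\ref{th:1}.
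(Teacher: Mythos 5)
Your proof is correct and takes essentially the same route as the paper's: the paper's two-line proof likewise reduces to the parabolic subword complex via the isomorphism of Lemma~\ref{lem:parabolic} and then applies Lemma~\ref{lem:12} to $\Delta(Q',w'_\circ)$. The extra bookkeeping you supply (verifying $q\neq c_i$ so that $q$ survives into $\widehat Q$, and transporting the facet back through $\varphi^{-1}$ before adjoining $c_i$) is left implicit in the paper but is exactly what makes that argument go through.
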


\begin{proof}
Consider the parabolic subgroup $W_{\langle c_i \rangle}$ obtained by removing the letter $c_i$ from $S$, and let~$\widehat Q$ and $Q'$ be the words as defined in Lemma \ref{lem:parabolic}. Since $\Delta(\widehat Q,\wo)$ and $\Delta(Q',w'_\circ)$ are isomorphic, applying Lemma ~\ref{lem:12} to $\Delta(Q',\wo')$ completes the proof.
\end{proof}

\begin{proof}[Proof of Theorem~\ref{th:1}]
Taking $c_i=s$, $-\alpha_s=\Lrc(c_i)$ and $\beta=\Lrc(q)$ the two directions of the equivalence follow from Propositions~\ref{prop:11} and~\ref{prop:c_i_and_q}.
\end{proof}

\subsection{Proof of condition $(ii)$}

The following theorem proves condition $(ii)$.

\begin{theorem}\label{th:2}
  Let $\beta_1, \beta_2 \in \Phipm$ and $s$ be an initial letter of a Coxeter element $c$. Then,  $\{\beta_1,\beta_2\}$ 
  is a face of the subword complex $\Delta(\cwoc,\wo)$ if and only if $\{\sigma_s(\beta_1),\sigma_s(\beta_2)\}$ 
  is a face of the subword complex $\Delta({\bf c'w}_\circ({\bf c'}),\wo)$, with $c'=scs$.
\end{theorem}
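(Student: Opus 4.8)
The goal is to show that the involution $\sigma_s$ on almost positive roots, when $s$ is initial in $c$, induces an isomorphism between the subword complex $\Delta(\cwoc,\wo)$ and $\Delta({\bf c'w}_\circ({\bf c'}),\wo)$ with $c'=scs$, matching the recursive condition $(ii)$ in the definition of $c$-compatibility. The natural strategy is to reduce this to the rotation isomorphism already established in Proposition~\ref{pr:rotated}, and then verify that the combinatorial isomorphism of complexes agrees, on vertices, with the map $\beta \mapsto \sigma_s(\beta)$ under the root-labelling $\Lrc$.

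\medskip

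\emph{The plan is as follows.} First I would invoke Proposition~\ref{prop:init_letter}: since $s$ is initial in $c$, the $c$-sorting word $\woc=(s,w_2,\dots,w_N)$ (up to commutations) has the property that $(w_2,\dots,w_N,\psi(s))$ is the $c'$-sorting word for $\wo$. As in the proof of Theorem~\ref{theorem:independent_of_c}, this lets me write $\cwoc = (s,c_2,\dots,c_n,s,w_2,\dots,w_N)$ up to commutations and identify ${\bf c'w}_\circ({\bf c'})$ as the rotation $\rotatedword{(\cwoc)}{s}$. By Proposition~\ref{pr:rotated}, the map sending each letter to itself for all letters except the initial $s$, which goes to the final $\psi(s)$, is a simplicial isomorphism $\Delta(\cwoc,\wo)\cong\Delta({\bf c'w}_\circ({\bf c'}),\wo)$. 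The remaining task is to check that this isomorphism, transported through the root labellings $\Lrc$ and $\Lrvar{c'}$, is exactly $\sigma_s$.

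\medskip

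\emph{The crux of the argument} is this vertex-level root computation. I would compare $\Lrc(q)$ with $\Lrvar{c'}(\phi(q))$, where $\phi$ is the rotation isomorphism, and show the second equals $\sigma_s$ applied to the first. There are a few cases to track. The initial letter $s=c_1$ carries $\Lrc(c_1)=-\alpha_s$; under rotation it becomes the final $\psi(s)$, whose $\Lrvar{c'}$-label is the positive root $\wo^{-1}$-conjugate computed along the full $c'$-sorting word, and I must verify this equals $\sigma_s(-\alpha_s)=s(-\alpha_s)=\alpha_s$. The negative letters $-\alpha_{c_i}$ for $i\neq 1$ are fixed by both the rotation and by $\sigma_s$ (since $-\alpha_{c_i}\in\Delta\setminus\{\alpha_s\}$), matching the first clause in the definition of $\sigma_s$. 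For a positive-root vertex $w_i$, its old label is $w_1w_2\cdots w_{i-1}(\alpha_{w_i})$ with $w_1=s$, while after dropping the initial $s$ the new label along $c'$ is $w_2\cdots w_{i-1}(\alpha_{w_i})$; since $\sigma_s$ acts as $s(\cdot)$ on positive roots not equal to $-\alpha_s$, and $s(w_1\cdots w_{i-1}(\alpha_{w_i}))=w_2\cdots w_{i-1}(\alpha_{w_i})$ because $w_1=s$, the labels again correspond under $\sigma_s$.

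\medskip

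\emph{The main obstacle} I anticipate is the bookkeeping caused by working ``up to commutations'': the letter $s$ need not literally be the first letter of a fixed reduced word, so I must argue that the commutations used to bring $s$ to the front (guaranteed by $s$ initial in $c$ and by Lemma~\ref{lem:prefix}) do not disturb the root labels of the other letters, using Proposition~\ref{pr:up to commutations} to pass freely between commutation-equivalent words. Once the vertex-map is shown to be $\sigma_s$, the statement about faces is immediate: $\phi$ is a simplicial isomorphism, so $\{\beta_1,\beta_2\}$ is a face of $\Delta(\cwoc,\wo)$ if and only if $\{\sigma_s(\beta_1),\sigma_s(\beta_2)\}$ is a face of $\Delta({\bf c'w}_\circ({\bf c'}),\wo)$. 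Combining Theorem~\ref{th:1} (condition $(i)$) with this Theorem~\ref{th:2} (condition $(ii)$) then verifies that the $c$-compatibility relation is exactly the non-face relation of the subword complex, completing the proof of Theorem~\ref{th:main theorem}.
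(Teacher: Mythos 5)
Your proposal is correct and takes essentially the same route as the paper: both proofs reduce the statement to the rotation isomorphism of Proposition~\ref{pr:rotated}, using Proposition~\ref{prop:init_letter} to identify the rotated word with $\mathbf{c'}\woword(\mathbf{c'})$ up to commutations, and then verify that the rotation intertwines the labellings, i.e.\ $\Lrvar{scs}(q')=\sigma_s(\Lrc(q))$ --- a check the paper dismisses as ``not hard to check'' but which you carry out explicitly case by case. The only detail to tidy in your vertex computation is the letter $w_1=s$: after rotation it sits as the last letter of the new copy of $\mathbf{c'}$ and is therefore labelled $-\alpha_s$, not by your sorting-word formula $w_2\cdots w_{i-1}(\alpha_{w_i})$; since $\sigma_s(\alpha_s)=s(\alpha_s)=-\alpha_s$, this case matches as well and the argument goes through.
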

\begin{proof}
  Let $Q=\cwoc$, $s$ be initial in $c$ and $\rotatedword{Q}{s}$ be the rotated word of $Q$, as defined in 
  Section~\ref{subsec:isomorphic subword complexes}.   By Proposition~\ref{prop:init_letter},  the word $\rotatedword{Q}{s}$ is equal to $\mathbf{c'}\woword({\bf c'})$ 
  up to commutations, and by Proposition~\ref{pr:rotated}  the subword complexes $\Delta(\cwoc,\wo)$ and 
  $\Delta({\bf c'w}_\circ({\bf c'}),\wo)$ are isomorphic. For every letter $q\in \cwoc$, we denote by $q'$ the corresponding 
  letter in $\mathbf{c'}\woword({\bf c'})$ obtained from the previous isomorphism. We write $q_1 \sim_c q_2$ if and only if $\{q_1,q_2\}$ is a face of 
  $\Delta(\cwoc,\wo)$. In terms of almost positive roots this is written as
  \[
    \Lrc(q_1) \sim_{c} \Lrc(q_2) \Longleftrightarrow \Lrvar{scs}(q'_1) \sim_{scs} \Lrvar{scs}\left({q'_2}\right).
  \]
  Note that the bijection $\Lrvar{scs}$ can be described using $\Lrc$. Indeed, it is not hard to check that 
  $\Lrvar{scs}(q')=\sigma_s(\Lrc(q))$ for all $q\in Q$. Therefore,
  \[
    \Lrc(q_1) \sim_{c} \Lrc(q_2) \Longleftrightarrow \sigma_s(\Lrc(q_1)) \sim_{scs} \sigma_s(\Lrc(q_2)).
  \]
  Taking $\beta_1=\Lrc(q_1)$ and $\beta_2=\Lrc(q_2)$ we get the desired result.
\end{proof}

%%%%%%%%%%%%%%%%%%%%%%%%%%%%%%%%%%%%%%%%%%%%%%%%%%%%%%%%%%%%%%%%
%
%  Section: Polytopality
%
%%%%%%%%%%%%%%%%%%%%%%%%%%%%%%%%%%%%%%%%%%%%%%%%%%%%%%%%%%%%%%%%

\section{Generalized multi-associahedra and polytopality of spherical subword complexes}\label{sec:polytopality}

In this section, we discuss the polytopality of spherical subword complexes and present what is known in the particular cases  of cluster complexes, simplicial complexes of multi-triangulations, and simplicial complexes of centrally symmetric multi-triangulations. We then prove polytopality of multi-cluster complexes of rank $2$. Finally, we show that every spherical subword complex is the link of a face of a multi-cluster complex, and consequently reduce the question of realizing spherical subword complexes to the question of realizing multi-cluster complexes. We use the term \emph{generalized multi-associahedron} for the dual of a polytopal realization of a multi-cluster complex --~but \emph{the existence of such realizations remains open} in general, see Table~\ref{tab:realization}. The subword complex approach provides new perspectives and methods for finding polytopal realizations. In a subsequent paper, C.~Stump and V.~Pilaud obtain a geometric construction of a class of subword complexes containing generalized associahedra purely in terms of subword complexes \cite{pilaud_brick_b_2011}.

\begin{small}
\begin{table}[!htbp]
\begin{center}
\begin{tabular}{|rl|}
\hline
&\\[-.13in]
simplicial complex generated by & polytopal realization of the dual\\[0.05in]
\hline
&\\[-.13in]
triangulations & associahedron \\
(classical)& \cite{haiman_associahedron_1984,lee_associahedron_1989,gelfand_discriminants_2008,loday_realization_2004,hohlweg_realizations_2007,ceballos_many_2011} \\[0.05in]
multi-triangulations & multi-associahedron \\
\cite{jonsson_generalized_2005, krattenthaler_growth_2006, pilaud_multitriangulations_2009, pilaud_multitriangulations_2010, stump_new_2011} & (existence conjectured) \\[0.05in]
centrally symmetric multi-triangulations & multi-associahedron of type $B$\\
\cite{soll_type-b_2009, rubey_crossings_2010} & (existence conjectured) \\[0.05in]
clusters & generalized associahedron \\
\cite{fomin_y-systems_2003,reading_cambrian_2006,reading_clusters_2007,reading_sortable_2007} & \cite{chapoton_polytopal_2002,hohlweg_realizations_2007,stella_polyhedral_2011,pilaud_brick_b_2011} \\[0.05in]
multi-clusters & generalized multi-associahedron \\
(present paper)&  (existence conjectured)\\[0.05in]
\hline
\end{tabular}
\vspace{.05in}
\end{center}
\caption{\label{tab:realization}Dictionary for generalized concepts of triangulations and associahedra.}
\vspace{-10pt}
\end{table}
\end{small}

\subsection{Generalized associahedra}\label{subsec:genass}

We have seen that for $k=1$, the multi-cluster complex $\Delta_c^1(W)$ is isomorphic to the $c$-cluster complex. S.~Fomin and A.~Zelevinsky conjectured the existence of polytopal realizations of the cluster complex in~\cite[Conjecture~1.12]{fomin_y-systems_2003}. F.~Chapoton, S.~Fomin, and A.~Zelevinsky then proved this conjecture by providing explicit inequalities for the defining hyperplanes of generalized associahedra~\cite{chapoton_polytopal_2002}. N.~Reading constructed $c$-Cambrian fans, which are complete simplicial fans coarsening the Coxeter fan, see \cite{reading_cambrian_2006}. In~\cite{reading_cambrian_2009}, N.~Reading and D.~Speyer prove that these fans are combinatorially isomorphic to the normal fan of the polytopal realization in~\cite{chapoton_polytopal_2002}. C.~Hohlweg, C.~Lange and H.~Thomas then provided a family of $c$-generalized associahedra having $c$-Cambrian fans as normal fans by removing certain hyperplanes from the permutahedron~\cite{hohlweg_permutahedra_2011}. V.~Pilaud and C.~Stump recovered $c$-generalized associahedra by giving explicit vertex and hyperplane descriptions purely in terms of the subword complex approach introduced in the present paper~\cite{pilaud_brick_b_2011}.

\subsection{Multi-associahedra of type $A$}

In type $A_n$ for $n=m-2k-1$, the multi-cluster complex $\Dk{A_n}$ is isomorphic to the simplicial complex $\Dmk$ of $k$-triangulations of a convex $m$-gon. This simplicial complex is conjectured to be realizable as the boundary complex of a polytope\footnote{As far as we know, the first reference to this conjecture appears in \cite[Section 1]{jonsson_generalized_2005}.}. It was studied in many different contexts, see \cite[Section 1]{pilaud_multitriangulations_2009} 
for a detailed description of previous work on multi-triangulations. Apart from the most simple cases, very little is known about its polytopality. Nevertheless, this simplicial complex possesses very nice properties which makes this conjecture plausible. 
Indeed, the subword complex approach provides a simple description of the $1$-skeleton of a possible multi-associahedron (see Lemma~\ref{le:uniquevertex}), and gives a new and very simple proof that it is a vertex-decomposable triangulated sphere~\cite[Theorem 2.1]{stump_new_2011}, see also~\cite{Jon2003}. Below, we survey the known polytopal realizations of $\Dmk$ as boundary complexes of convex polytopes. 
The simplicial complex $\Dmk$, or equivalently the multi-cluster complex $\Dk{A_n}$ for $n=m-2k-1$, is the boundary complex of
\begin{itemize}
  \item a point, if $k=0$,
  \item an $n$-dimensional dual associahedron, if $k=1$,
  \item a $k$-dimensional simplex, if $n=1$,
  \item a $2k$-dimensional cyclic polytope on $2k+3$ vertices, if $n=2$, see \cite[Section 8]{pilaud_multitriangulations_2009},
  \item a $6$-dimensional simplicial polytope, if $n=3$ and $k=2$, see \cite{bokowski_symmetric_2009}.
\end{itemize}
The case $n=2$ is also a direct consequence of the rank 2 description in Section~\ref{sec:rank2}. 
Further unsuccessful attempts to realize $\Dmk$ come from various directions in discrete geometry.
\begin{enumerate}[(a)]
  \item A generalized construction of the polytope of pseudo-triangulations~\cite{rote_pseudo_2008} using rigidity of pseudo-triangulations~\cite[Section 4.2 and Remark 4.82]{pilaud_thesis_2010}.
  \item A generalized construction of the secondary polytope. As presented in \cite{gelfand_discriminants_2008}, the secondary polytope of a point configuration can be generalized using star polygons~\cite[Section 4.3]{pilaud_thesis_2010}.
  \item The brick polytope of a sorting network~\cite{pilaud_brick_2011}. This new approach brought up a large family of spherical subword complexes which are realizable as the boundary of a polytope. In particular it provides a new perspective on generalized associahedra~\cite{pilaud_brick_b_2011}. Unfortunately, this polytope fails to realize the multi-associahedron.
\end{enumerate}

\subsection{Multi-associahedra of type $B$}\label{subsec:multi-type-B}

We start by proving Theorem~\ref{th:typeB} which says that 
the multi-cluster complex $\Dk{B_{m-k}}$ is isomorphic to the simplicial complex of centrally symmetric $k$-triangulations of a regular convex $2m$-gon.
This simplicial complex was studied in~\cite{soll_type-b_2009,rubey_crossings_2010}. We then present what is known about its polytopality. The new approach using subword complexes provides in particular very simple proofs of Corollaries~\ref{cor:sym_multi},~\ref{cor:typeBvdecomp} and~\ref{cor:conj17}.
\begin{proof}[Proof of Theorem \ref{th:typeB}]
  Let $S=\{s_0,s_1,\dots, s_{m-k-1}\}$ be the generators of $B_{m-k}$, where $s_0$ is the generator such that 
  $(s_0s_1)^4=\Id \in W$, and the other generators satisfy the same relations as in type~$A_{m-k-1}$. Then, 
  embed the group $B_{m-k}$ in the group $A_{2(m-k)-1}$ by the standard folding technique: replace $s_0$ by $s'_{m-k}$ 
  and $s_i$ by $s'_{m-k+i} s'_{m-k-i}$ for $1 \leq i \leq m-k-1$, where the set $S'=\{s_1',\dots, s_{2(m-k)-1}'\}$ generates the group $A_{2(m-k)-1}$. The 
  multi-cluster complex $\Dk {B_{m-k}}$ now has an embedding into the multi-cluster complex $\Delta^{k}_{c'}(A_{2(m-k)-1})$, 
  where $c'$ is the Coxeter element of type $A_{2(m-k)-1}$ corresponding to $c$ in $B_{m-k}$; 
  the corresponding subcomplex has the property that $2(m-k)$ generators (all of them except $s'_{m-k}$) always 
  come in pairs. Using the correspondence between $k$-triangulations and facets of the multi-cluster complex described in Section~\ref{sec:results}, the facets of $\Dk {B_{m-k}}$ considered in $\Delta^{k}_{c'}(A_{2(m-k)-1})$ correspond to centrally symmetric multi-triangulations.
\end{proof}
Here, we present the few cases for which this simplicial complex is known to be polytopal. The multi-cluster complex $\Dk {B_{m-k}}$ is the boundary complex of
\begin{itemize}
  \item an $(m-1)$-dimensional dual cyclohedron (or type $B$ associahedron), if $k=1$, see \cite{simion_type-b_2003, hohlweg_realizations_2007},
  \item an $(m-1)$-dimensional simplex, if $k=m-1$,
  \item a $(2m-4)$-dimensional cyclic polytope on $2m$ vertices, if $k=m-2$, see \cite{soll_type-b_2009}.
\end{itemize}
The case $k=m-2$ also follows from the rank 2 description in Section \ref{sec:rank2}.

\subsection{Generalized multi-associahedra of rank $2$}\label{sec:rank2}

We now prove that multi-cluster complexes of rank $2$ can be realized as boundary complexes of cyclic polytopes. In other words, we show the existence of rank $2$ multi-associahedra. This particular case was known independently by~D.~Armstrong\footnote{Personal communication.}.
\begin{theorem}[Type $I_2(m)$ multi-associahedra] \label{ex:I_2}
  The multi-cluster complex $\Dk {I_2(m)}$ is isomorphic to the boundary complex of a $2k$-dimensional cyclic polytope on $2k+m$ vertices. The multi-associahedron of type $I_2(m)$ is the simple polytope given by the dual of a 
  $2k$-dimensional cyclic polytope on $2k+m$ vertices.
\end{theorem}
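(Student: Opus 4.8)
The plan is to compute the multi-cluster complex $\Dk{I_2(m)}$ explicitly and show that it is combinatorially a cyclic polytope boundary. First I would set up the Coxeter system of type $I_2(m)$ with generators $S=\{s_1,s_2\}$ and relation $(s_1s_2)^m=\Id$. Here the rank is $n=2$, the Coxeter number is $h=m$, and so $N=\ell(\wo)=nh/2=m$. Choosing the Coxeter element $c=c_1c_2=s_1s_2$, the $c$-sorting word $\woc$ is the alternating word of length $m$, namely $(s_1,s_2,s_1,s_2,\dots)$, and hence the word $\ckwoc=\c^k\woc$ is simply the alternating word $(s_1,s_2,s_1,s_2,\ldots)$ of total length $2k+m$. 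By Theorem~\ref{theorem:independent_of_c} the choice of $c$ is irrelevant, so this is the object to analyze.

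Next I would identify the facets of the subword complex $\Delta(\ckwoc,\wo)$. A subword is a facet precisely when its complement is a reduced expression for $\wo$; since $\wo$ has length $m$ and the word has length $2k+m$, every facet omits exactly $2k$ letters, so the complex is pure of dimension $2k-1$ with $2k+m$ vertices. The key combinatorial step is to characterize which $2k$-subsets of positions, upon deletion, leave a reduced word for $\wo$. Because any alternating subword of the alternating word of length $\le m$ is automatically reduced, the condition is purely a length condition together with the requirement that deleting letters does not force cancellation; concretely, deleting a set $P$ of $2k$ letters yields a reduced expression for $\wo$ if and only if $P$ decomposes into "cancelling pairs" in a controlled way. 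I expect the clean formulation to be a Gale-type evenness condition: labelling the vertices cyclically $1,\dots,2k+m$, a $2k$-subset is the complement of a facet exactly when it satisfies Gale's evenness condition for the cyclic polytope, which would directly yield the isomorphism with the boundary complex of the $2k$-dimensional cyclic polytope on $2k+m$ vertices.

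The main obstacle will be verifying precisely this Gale evenness condition from the reducedness criterion. The subtle point is that in the subword complex the vertices carry their cyclic structure (via the rotation of Proposition~\ref{pr:rotated} and the bi-infinite word $\widetilde Q$), and one must check that the deletion-reducedness condition matches Gale's evenness when the $2k+m$ vertices are placed on the moment curve in cyclic order. I would prove this by translating reducedness of an alternating subword into a statement about the pattern of retained runs: deleting a maximal block of consecutive equal-parity letters is forbidden beyond what length permits, and the admissible deletion patterns are exactly those where the omitted positions come in an even number of maximal runs bounded by the endpoints — which is Gale's condition. Once the facets are matched, purity, dimension, and the vertex count already agree, and since a simplicial sphere of dimension $2k-1$ on $2k+m$ vertices whose facets satisfy Gale evenness is the boundary of the cyclic polytope, the theorem follows. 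Finally, dualizing gives the stated simple polytope, the multi-associahedron of type $I_2(m)$.
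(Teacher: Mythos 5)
Your proposal is correct and takes essentially the same route as the paper: one computes that $\ckwoc$ in type $I_2(m)$ is the alternating word $(a,b,a,b,\dots)$ of length $2k+m$, observes that the complement of a facet, being a reduced expression for $\wo$, must strictly alternate, and concludes that any two consecutive complement letters are distinct generators and hence enclose an even number of facet letters --- which is precisely Gale's evenness criterion for the $2k$-dimensional cyclic polytope on $2k+m$ vertices. The only blemish is your closing paraphrase (``the omitted positions come in an even number of maximal runs''), which mis-states the condition: what is needed, and what your own earlier reduction actually gives, is that each maximal run of omitted letters between two retained letters has \emph{even length} (with the two boundary runs merging cyclically), while the number of such runs is unconstrained.
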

\begin{proof}
   This is obtained by Gale's evenness criterion on the word $Q=(a,b,a,b,a,\dots)$ of length $2k+m$: Let $F$ be a facet of $\Dk {I_2(m)}$, and take two consecutive letters $x$ and $y$ in the complement of $F$. Since the complement of $F$ is a reduced expression of $\wo$, then $x$ and $y$ must represent different generators. Since the letters in $Q$ are alternating, it implies that the number of letters between $x$ and $y$ is even.
\end{proof}

\subsection{Generalized multi-associahedra}\label{subsec:Genmultiasso}

Recall from Section~\ref{sec:subword complexes} that a subword complex $\Delta(Q,\pi)$ is homeomorphic to a sphere if and 
only if the Demazure product $\delta(Q)=\pi$, and to a ball otherwise. This 
motivates the question whether spherical subword complexes can be realized as boundary complexes of polytopes 
\cite[Question 6.4.]{knutson_subword_2004}. We show that it is enough to consider multi-cluster complexes to prove 
polytopality for all spherical subword complexes, and we characterize simplicial spheres that can be realized 
as subword complexes in terms of faces of multi-cluster complexes.

\begin{lemma} 
  Every spherical subword complex $\Delta(Q,\wo)$ is the link of a face of a multi-cluster complex 
  $\Delta(\ckwoc,\wo)$.
\end{lemma}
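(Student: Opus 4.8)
The plan is to realize $\Delta(Q,\wo)$ directly as the link of a face inside a multi-cluster complex $\Delta(\ckwoc,\wo)$, exploiting the elementary fact that links of faces of subword complexes are again subword complexes. First I would record this fact: for any word $Q'$ in $S$ and any face $F$ of $\Delta(Q',\wo)$, a set $G$ disjoint from $F$ lies in the link of $F$ exactly when $Q'\setminus(F\cup G)$ contains a reduced expression for $\wo$, that is, exactly when $G$ is a face of $\Delta(Q'\setminus F,\wo)$. Hence $\operatorname{link}_{\Delta(Q',\wo)}(F)=\Delta(Q'\setminus F,\wo)$, where $Q'\setminus F$ denotes the word obtained by deleting the positions of $F$ from $Q'$. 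It therefore suffices to realize $Q$ as the complement $Q'\setminus F$ of a face $F$ inside the word $Q'=\ckwoc$ of some multi-cluster complex.

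The key step is to embed $Q$ as a subword of a power of $\c$. Writing $\c=(c_1,\dots,c_n)$ for a reduced word of the Coxeter element $c$, the bi-infinite word $\c^{\infty}$ is periodic and contains each generator of $S$ once per period, so from any position the next occurrence of a prescribed generator is at most $n$ steps ahead. I would then scan $Q=(q_1,\dots,q_r)$ from left to right, matching $q_1$ to its first occurrence in $\c^{\infty}$ and each subsequent $q_i$ to the first occurrence of that generator strictly after the position matched to $q_{i-1}$. This greedy procedure terminates within $r$ periods and embeds $Q$ as a subword of $\c^{k}$ with $k\le r$. Since $\c^{k}$ is a prefix of $\ckwoc=\c^{k}\woc$, this exhibits $Q$ as a subword of the multi-cluster word $\ckwoc$.

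Finally I would let $F$ be the complementary subword, so that $\ckwoc\setminus F=Q$ as embedded words. Because $\Delta(Q,\wo)$ is spherical we have $\delta(Q)=\wo$, so $Q$ contains a reduced expression for $\wo$; equivalently the complement of $F$ in $\ckwoc$ contains a reduced expression for $\wo$, whence $F$ is a face of $\Delta(\ckwoc,\wo)=\DWk$. Combining this with the link identification of the first paragraph gives
\[
  \operatorname{link}_{\DWk}(F)=\Delta(\ckwoc\setminus F,\wo)=\Delta(Q,\wo),
\]
which is exactly the desired statement.

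I expect the only point requiring genuine care to be the greedy embedding of $Q$ into a power of $\c$, together with the associated bookkeeping of positions; this is precisely what fixes the value of $k$, while the link identification and the verification that $F$ is a face are formal consequences of the definition of subword complexes. If one prefers to avoid the position bookkeeping, one may first replace $Q$ by any word equal to it up to commutations and rotations before performing the embedding, since by Proposition~\ref{pr:up to commutations} and Proposition~\ref{pr:rotated} these operations preserve the isomorphism type of $\Delta(Q,\wo)$.
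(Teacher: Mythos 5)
Your proof is correct and takes essentially the same route as the paper: embed $Q$ as a subword of the prefix $\c^k$ of $\ckwoc$ with $k \le |Q|$, note that sphericity gives $\delta(Q)=\wo$ so the complementary positions form a face, and identify the link of that face with $\Delta(Q,\wo)$. The only cosmetic difference is your greedy matching into $\c^\infty$; the paper simply places the $i$-th letter of $Q$ in the $i$-th copy of $\c$ (valid since each copy contains every generator), which avoids the position bookkeeping you flag as the delicate point.
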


\begin{proof}
  Observe that any word $Q$ in $S$ can be embedded as a subword of $Q'=\ckwoc$, for $k$ less than or equal to the 
  size of $Q$, by assigning the $i$-th letter of $Q$ within the $i$-th copy of $\c$. Since the Demazure product $\delta (Q)$ 
  is equal to $\wo$, the word $Q$ contains a reduced expression for $\wo$. In other words, the set $Q'\setminus Q$ is a 
  face of $\Delta(Q',\wo)$. The link of this face in $\Delta(Q',\wo)$ consists of subwords of $Q$ -- viewed as a subword of 
  $Q'$ -- whose complements contain a reduced expression of $\wo$. This corresponds exactly to the subword complex~$\Delta(Q,\wo)$.
\end{proof}

We now prove that simplicial spheres realizable as subword complexes are links of faces of multi-cluster complexes.

\begin{proof}[Proof of Theorem~\ref{thm:universal}]
  For any spherical subword complex $\Delta(Q,\pi)$, we have that the Demazure product $\delta(Q)$ equals $\pi$. By Theorem~\ref{th:subword complexes for w0}, $\Delta(Q,\pi)$ is isomorphic to a subword complex of the form $\Delta(Q',\wo)$. Using the previous lemma we obtain that $\Delta(Q,\pi)$ is the link of a face of a multi-cluster complex. 
The other direction follows since the link of a subword (i.e., a face) of a multi-cluster complex is itself a subword complex, corresponding to the complement of this subword.
\end{proof}

Finally we prove that the question of polytopality of spherical subword complexes is equivalent to the question of polytopality of multi-cluster complexes.

\begin{proof}[Proof of Corollary \ref{cor:polytopality}]
  On one hand, if every spherical subword complex is polytopal then clearly every multi-cluster complex is polytopal. On the other 
  hand, suppose that every spherical subword complex is polytopal. Every spherical subword complex is the link of a face of a 
  multi-cluster complex. Since the link of a face of a polytope is also polytopal, Theorem~\ref{thm:universal} implies that every spherical subword complex is polytopal.
\end{proof}

%%%%%%%%%%%%%%%%%%%%%%%%%%%%%%%%%%%%%%%%%%%%%%%%%%%%%%%%%%%%%%%%
%
%  Section: Sorting words & SIN-property
%
%%%%%%%%%%%%%%%%%%%%%%%%%%%%%%%%%%%%%%%%%%%%%%%%%%%%%%%%%%%%%%%%

\section{Sorting words of the longest element and the SIN-property} \label{sec:sufficient}

In this section, we give a simple combinatorial description of the $c$-sorting words of $\wo$, and 
prove that a word $Q$ coincides up to commutations with $\ckwoc$ for some non-negative integer $k$ if and only if $Q$ has the SIN-property as defined in Section~\ref{sec:results}. This gives us an alternative way of defining multi-cluster complexes in terms of words having the SIN-property. Recall the involution $\psi : S \rightarrow S$ from Section~\ref{sec:sorting} defined by $\psi(s) = \wo^{-1} s \wo$. The sorting word of $\wo$ has the following important property.

\begin{proposition}\label{prop:suffix}
  The sorting word $\woc$ is, up to commutations, equal to a word with suffix 
  $(\psi(c_1),\dots, \psi(c_n))$, where $c=c_1\cdots c_n$.
\end{proposition}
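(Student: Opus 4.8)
The plan is to iterate Proposition~\ref{prop:init_letter}, rotating the initial letter of the Coxeter element $n$ times so that the whole word $\c$ is pushed, letter by letter, to the end of the sorting word as $\psi(\c)$. Write $\c=(c_1,\dots,c_n)$ and set $c^{(0)}=c$. Since $\c$ is reduced, $c_1$ is initial in $c$; and because every simple reflection is a left descent of $\wo$, the $c$-sorting word $\woc$ begins (indeed literally) with $c_1$. Define recursively $c^{(i+1)}=c_{i+1}\,c^{(i)}\,c_{i+1}$, and let $\mathbf u^{(i)}$ denote the $c^{(i)}$-sorting word of $\wo$, so that $\mathbf u^{(0)}=\woc$.

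First I would verify by a direct computation that $c^{(i)}=c_{i+1}c_{i+2}\cdots c_n c_1\cdots c_i$, with reduced word $\c^{(i)}=(c_{i+1},\dots,c_n,c_1,\dots,c_i)$; in particular $c_{i+1}$ is initial in $c^{(i)}$ for $0\le i\le n-1$, and $c^{(n)}=c$. At each stage, the $c^{(i)}$-sorting word $\mathbf u^{(i)}$ begins, up to commutations, with the initial letter $c_{i+1}$: this is exactly the argument of the first step applied to $c^{(i)}$, since Lemma~\ref{le:speyer} together with Lemma~\ref{lem:prefix}$(ii)$ present $\mathbf u^{(i)}$ as a prefix of $(c^{(i)})^\infty$ up to commutations, and $c_{i+1}$ is a left descent of $\wo$. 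Thus Proposition~\ref{prop:init_letter} applies and yields that $\mathbf u^{(i+1)}$ is obtained from $\mathbf u^{(i)}$, up to commutations, by deleting the initial $c_{i+1}$ and appending $\psi(c_{i+1})$ at the end.

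Running this rotation $n$ times transforms $\mathbf u^{(0)}=\woc$ into $\mathbf u^{(n)}$ by successively deleting $c_1,c_2,\dots,c_n$ from the front and appending $\psi(c_1),\psi(c_2),\dots,\psi(c_n)$ to the back in this order; hence $\mathbf u^{(n)}$ ends, up to commutations, with the suffix $(\psi(c_1),\dots,\psi(c_n))$. On the other hand $c^{(n)}=c$, so $\mathbf u^{(n)}$ is the $c$-sorting word of $\wo$, i.e. $\mathbf u^{(n)}=\woc$ up to commutations. Combining the two descriptions of $\mathbf u^{(n)}$ gives that $\woc$ is, up to commutations, a word with suffix $(\psi(c_1),\dots,\psi(c_n))$, as claimed.

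The main obstacle I anticipate is the bookkeeping needed to apply Proposition~\ref{prop:init_letter} repeatedly: one must check that after each rotation the resulting word is genuinely the sorting word of $\wo$ for the conjugated Coxeter element $c^{(i+1)}$ (which the proposition guarantees) and that its next initial letter is $c_{i+2}$, so that the commutation equivalences accumulated over the $n$ steps compose correctly. Since commutation equivalence is preserved under the rotation operation of Section~\ref{subsec:isomorphic subword complexes} and the conjugation chain closes up after exactly $n$ steps ($c^{(n)}=c$), this bookkeeping is routine once the identity $c^{(i)}=c_{i+1}\cdots c_n c_1\cdots c_i$ is in place.
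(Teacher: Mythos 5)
Your proof is correct and follows essentially the same route as the paper: the paper's proof consists precisely of observing that $\woc$ has $\c$ as a prefix and then applying Proposition~\ref{prop:init_letter} $n$ times. Your explicit conjugation chain $c^{(i)}=c_{i+1}\cdots c_n c_1\cdots c_i$ and the verification that each $\mathbf u^{(i)}$ begins with the initial letter $c_{i+1}$ simply spell out the bookkeeping that the paper leaves implicit.
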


\begin{proof}
  As $\wo$ has a $c$-sorting word having $\c = (c_1,\ldots,c_n)$ as a prefix, the corollary is obtained by applying 
  Proposition~\ref{prop:init_letter} $n$ times.
\end{proof}

Given a word ${\bf w}$ in $S$, define the function $\phi_{\bf w}: S \rightarrow \mathbb{N}$ given by $\phi_{\bf w} (s)$ being the number of occurrences of the letter $s$ in ${\bf w}$.

\begin{theorem} \label{thm:charac_phi}
  Let $\woc$ be the $c$-sorting word of $\wo$ and let $s,t$ be neighbors in the Coxeter graph such that $s$ comes before $t$ in $c$. Then
  $$
  \phi_{\woc}(s) =
  \begin{cases}
    \phi_{\woc}(t) & \text{if $\psi(s)$ comes before $\psi(t)$ in $c$}, \\
    \phi_{\woc}(t)+1 & \text{if $\psi(s)$ comes after $\psi(t)$ in $c$}.
  \end{cases}
  $$
\end{theorem}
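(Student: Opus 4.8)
The plan is to analyze the subsequence of $\woc$ consisting only of the two letters $s$ and $t$ and to read off both $\phi_{\woc}(s)$ and $\phi_{\woc}(t)$ from it. First I would invoke Lemma~\ref{le:speyer} together with Lemma~\ref{lem:prefix}(ii) to record that $\woc$ is, up to commutations, a reduced prefix of $\c^\infty$. Since $s$ and $t$ are neighbors in the Coxeter graph they do not commute, so no sequence of commutations ever interchanges an $s$ with a $t$; hence the subsequence of $s$'s and $t$'s is invariant under commutations and may be computed in any word representing $\woc$ up to commutations, in particular in a genuine prefix of $\c^\infty$. As $s$ comes before $t$ in $c$, each copy of $\c$ contributes the block $(s,t)$ in that order, so in $\c^\infty$ the $\{s,t\}$-subsequence is the alternating word $s,t,s,t,\dots$; restricting to a prefix, the $\{s,t\}$-subsequence of $\woc$ is therefore a prefix of $s,t,s,t,\dots$.

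The counting step is then immediate: if this prefix ends in $t$ it contains equally many $s$'s and $t$'s, giving $\phi_{\woc}(s)=\phi_{\woc}(t)$, whereas if it ends in $s$ it contains exactly one more $s$ than $t$, giving $\phi_{\woc}(s)=\phi_{\woc}(t)+1$. Thus everything reduces to deciding whether the \emph{last} occurrence of $s$ precedes or follows the last occurrence of $t$ in $\woc$.

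To locate these last occurrences I would use Proposition~\ref{prop:suffix}, which states that $\woc$ is, up to commutations, a word with suffix $(\psi(c_1),\dots,\psi(c_n))$. This suffix contains each generator exactly once, so the unique occurrence of a generator $g$ inside it is precisely the last occurrence of $g$ in $\woc$; and since $s$ and $t$ do not commute, their relative order within the suffix is well defined. In the suffix the letter $s$ sits at the index $i$ with $\psi(c_i)=s$, i.e.\ with $c_i=\psi(s)$, which is exactly the position of $\psi(s)$ in $\c$; likewise the last $t$ sits at the position of $\psi(t)$ in $\c$. Consequently the last $s$ precedes the last $t$ if and only if $\psi(s)$ comes before $\psi(t)$ in $c$. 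Combining with the counting step yields the claim: when $\psi(s)$ comes before $\psi(t)$ the $\{s,t\}$-subsequence ends in $t$ and $\phi_{\woc}(s)=\phi_{\woc}(t)$, while when $\psi(s)$ comes after $\psi(t)$ it ends in $s$ and $\phi_{\woc}(s)=\phi_{\woc}(t)+1$.

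The main obstacle I anticipate is the bookkeeping around ``up to commutations'': one must check that passing to a commutation-equivalent word disturbs neither the $\{s,t\}$-subsequence (clear, since $s$ and $t$ do not commute) nor the identification of the suffix occurrences with the true last occurrences, and that $\psi$, being induced by conjugation by $\wo$, is a graph automorphism, so that $\psi(s),\psi(t)$ are again neighbors and ``comes before in $c$'' is meaningful for them.
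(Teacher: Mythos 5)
Your proof is correct and takes essentially the same route as the paper's: the paper invokes the intervening neighbors property of $\woc$ (citing \cite{speyer_powers_2009}) to get that $s$ and $t$ alternate with $s$ first---which is precisely what you derive from Lemma~\ref{le:speyer} and the prefix-of-$\c^\infty$ description, an equivalence the paper itself notes elsewhere---and then, just as you do, it applies Proposition~\ref{prop:suffix} to read off the relative order of the last occurrences of $s$ and $t$ from the suffix $\psi(\c)$. Your extra bookkeeping (commutation-invariance of the $\{s,t\}$-subsequence, and $\psi$ being a graph automorphism so that the dichotomy is well defined) is sound and merely makes explicit what the paper leaves implicit.
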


\begin{proof}
  Sorting words of $\wo$ have intervening neighbors, see~\cite[Proposition~2.1]{speyer_powers_2009} for an equivalent formulation. Therefore $s$ and $t$ alternate in $\woc$, with $s$ coming first. Thus, $\phi_{\woc}(s) = \phi_{\woc}(t)$ if and only if the last $t$ comes after the last $s$. Using Proposition \ref{prop:suffix}, this means that $s$ appears before $t$ in $\psi(\c)$ or equivalently $\psi(s)$ appear before $\psi(t)$ in $c$. Otherwise, the last $s$ will appear after the last $t$.
\end{proof}

It is known that if $\psi$ is the identity on $S$, or equivalently if $\wo = -\Id$, then the $c$-sorting word of $\wo$ is given by 
$\woc=\c^{\frac{h}{2}}$, where $h$ denotes the \Dfn{Coxeter number} given by the order of any Coxeter element. In the case where $\psi$ is not the identity on $S$ (that is when $W$ is of types $A_n$ ($n\geq 2$), $D_n$ ($n$ odd), $E_6$ and $I_2(m)$ ($m$ odd), see~\cite[Exercise 10 of Chapter 4]{bjoerner_combinatorics_2005}), the previous theorem gives simple way to obtain the sorting words of $\wo$.
\begin{algorithm}\label{algo:auslanderreiten}
Let $W$ be an irreducible finite Coxeter group, and let $c = c_1c_2 \cdots c_n$ be a Coxeter element.
  \begin{enumerate}[(i)]
    \item Since the Coxeter diagram is connected, one can use Theorem~\ref{thm:charac_phi} to compute $\phi_{\woc}(s)$ for all $s$ depending on $m := \phi_{\woc}(c_1)$;
    \item using that the number of positive roots equals $nh/2$, one obtains $m$ and thus all $\phi_{\woc}(s)$ using $$2 \cdot \sum_{s \in S} \phi_{\woc}(s) = nh.$$
    \item using that $\woc = \c_{K_1} \c_{K_2} \cdots \c_{K_r}$ where $K_i\subseteq S$ for $1\leq i\leq r$ and $c_I$, with $I\subseteq S$, 
  is the Coxeter element of $W_I$ obtained from $c$ by keeping only letters in $I$, we obtain that $\c_{K_i}$ is the product of all $s$ for which $\phi_{\woc}(s) \geq i$.
  \end{enumerate}
\end{algorithm}
This algorithm provides an explicit description of the sorting words of the longest element $\wo$ of any finite Coxeter group using nothing else than Coxeter group theory. This answers a question raised in~\cite[Remark~2.3]{hohlweg_permutahedra_2011} and simplifies a step in the construction of the $c$-generalized associahedron. We now give two examples of how to use this algorithm.

\begin{example}\label{ex:A4}
  Let $W=A_4$ and $S=\{s_1,s_2,s_3, s_4\}$ with the labeling of the graph shown in Figure~\ref{fig:AR} on page~\pageref{fig:AR}. Moreover, let~$c=s_1s_3s_2s_4$. Fix $\phi_{\woc}(s_1)=m$. Since $s_1$ comes 
  before $s_2$ in $c$ and that $\psi(s_1)=s_4$ comes after $\psi(s_2)=s_3$, the letter $s_1$ appears one more time than 
  the letter $s_2$ in $\woc$, i.e., $\phi_{\woc}(s_2)=m-1$. Repeating the same argument gives 
  $\phi_{\woc}(s_3) = m \text{ and } \ \phi_{\woc}(s_4) = m-1$. Summing up these values gives the equality 
  $4m-2=\frac{n\cdot h}{2}=\frac{4\cdot 5}{2}=10$, and thus $m=3$. Finally, the $c$-sorting word is 
  $\woc=(s_1,s_3,s_2,s_4 | s_1,s_3,s_2,s_4 | s_1,s_3)$.
\end{example}
\begin{example}\label{ex:E6}
  Let $W=E_6$ and $S=\{s_1,s_2,\dots, s_6\}$ with the labeling of the graph shown in Figure~\ref{fig:AR} on page~\pageref{fig:AR}. Moreover, let 
  $c=s_3s_5s_4s_6s_2s_1$. Fix $\phi_{\woc}(s_6)=m$. Repeating the same procedure from 
  the previous example and using that $\psi(s_6)=s_6$, $\psi(s_3)=s_3$, $\psi(s_2)=s_5$, $\psi(s_1)=s_4$, one get 
  $\phi_{\woc}(s_1) = \phi_{\woc}(s_2) =  {m-1}, \ \phi_{\woc}(s_3) = \phi_{\woc}(s_6) = m, \ 
  \phi_{\woc}(s_4) = \phi_{\woc}(s_5) = m+1$. As the sum equals $\frac{nh}{2} = \frac{6\cdot 12}{2} = 36$, we obtain $m=6$. Finally, the $c$-sorting word is $(\c^5 | s_3,s_5,s_4,s_6 | s_5,s_4)$.
\end{example}

\begin{remark} \label{rem:computation}
  Propositions~\ref{prop:init_letter} and \ref{prop:suffix} have the following computational consequences. Denote by 
  $\operatorname{rev}({\bf w})$ the reverse of a word ${\bf w}$. First, up to commutations, we have
    $$\woc = \operatorname{rev}(\woword(\psi(\operatorname{rev}(\c)))).$$
  Second, we also have, up to commutation,
    $$\c^h = \woc \operatorname{rev}(\woword(\operatorname{rev}(\c))).$$
  Third, for all $s\in S$,
  $$\phi_{\woc}(s)+\phi_{\mathbf{w}_\circ(\operatorname{rev}(\c))}(s) = \phi_{\woc}(s)+\phi_{\woc}(\psi(s)) = h.$$
\end{remark}
We are now in the position to prove Theorem~\ref{thm:if_part_conj}.
\begin{proof}[Proof of Theorem~\ref{thm:if_part_conj}]
  Suppose that a word $Q$ has the SIN-property, then it has complete support by definition, and it contains, up to commutations, some word $\c=(c_1,\dots, c_n)$ for a Coxeter element $c$ as a prefix. Moreover, the word $(\psi(c_1),\dots, \psi(c_n))$ is a suffix of $Q$, up to commutations. 
  Observe that a word has intervening neighbors if and only if it is a prefix of $\c^\infty$ up to commutations, see \cite[Section~3]{eriksson_conjugacy_2009}. In view of Lemma~\ref{le:speyer} and the equality $\delta(Q)=\wo$, the word $Q$ has, up to commutations, $\woc$ as a prefix. If the length of $Q$ equals $\wo$ 
  the proof ends here with $k=0$. Otherwise, the analogous argument for $\operatorname{rev}(Q)$ gives that the word $\operatorname{rev}(Q)$ has, up to commutations, $\woword(\psi(\operatorname{rev}(\c)))$ as a prefix. By Remark~\ref{rem:computation}, the word~$\woword(\psi(\operatorname{rev}(\c)))$ is, up commutations, equal to the reverse of $\woc$. Therefore,~$Q$ has the word $\woc$ also as a suffix. Since $\c=(c_1,\dots, c_n)$ is a prefix of $Q$ and of $\woc$, and $Q$ has intervening neighbors, $Q$ coincides with $\ckwoc$ up to commutations.
  Moreover, if $Q$ is equal to $\ckwoc$ up to commutations, it has intervening neighbors, and a suffix $(\psi(c_1),\dots, \psi(c_n))$, up to commutations, by Proposition \ref{prop:suffix}. This implies that the word $Q$ has the SIN-property.
\end{proof}

\begin{remark}\label{re:choosing}
  In light of Theorem~\ref{thm:if_part_conj} and Section~\ref{subsec:isomorphic subword complexes}, starting with a word $Q$ having the SIN-property suffices to construct a multi-cluster complex, and choosing a particular connected subword in the bi-infinite word $\widetilde Q$, defined in Section~\ref{subsec:isomorphic subword complexes}, corresponds to choosing a particular Coxeter element.
\end{remark}

We finish this section with a simple observation on the bi-infinite word $\widetilde Q$. For any letter $q$ in the word $Q\psi(Q)$, let $\beta_q$ be the root obtained by applying the prefix $w_q$ of $Q\psi(Q)$ before $q$ to the simple root $\alpha_q$. To obtain roots for all letters in $\widetilde Q$, repeat this association periodically.

\begin{proposition}\label{pr:last reflection on SIN words}
  Let $Q$ be a word in $S$ having the SIN-property, and let $q,q'$ be two consecutive occurrences of the same letter $s$ in $\widetilde Q$. Then
  $$\beta_q+\beta_{q'} = \sum_{p} -a_{sp} \beta_{p},$$
  where the sum ranges over the collection of letters $p$ in $\widetilde Q$ between $q$ and $q'$ corresponding to neighbors of $s$ in the Coxeter graph, and where $(a_{st})_{s,t \in S}$ is the corresponding \Dfn{Cartan matrix}.
\end{proposition}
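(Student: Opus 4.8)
The plan is to reduce the asserted identity to a short telescoping computation built on the multiplicative description of the roots along $\widetilde Q$. First I would index the bi-infinite word as $\widetilde Q = (\ldots, r_{-1}, r_0, r_1, \ldots)$ and record that, with respect to a fixed base point, the root attached to a letter is the prefix product applied to its simple root, $\beta_{r_i} = r_0 r_1 \cdots r_{i-1}(\alpha_{r_i})$. Since both sides of the claimed equality involve only the letters occurring weakly between $q$ and $q'$, it suffices to work with this prefix-product description on that finite segment. Write $q = r_a$, $q' = r_b$ with $a<b$, and let $w = r_0\cdots r_{a-1}$ be the prefix product ending just before $q$, so that $\beta_q = w(\alpha_s)$. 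Because $q$ and $q'$ are \emph{consecutive} occurrences of $s$, the letter $s$ does not occur strictly between them; setting $v = r_{a+1}\cdots r_{b-1}$ for the product of the intervening letters and using $r_a = s$, the prefix ending just before $q'$ is $wsv$, whence $\beta_{q'} = wsv(\alpha_s)$.

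The key algebraic manoeuvre is then to factor out the common prefix and use $s(\alpha_s) = -\alpha_s$:
\[
  \beta_q + \beta_{q'} = w\big(\alpha_s + sv(\alpha_s)\big) = ws\big(s(\alpha_s) + v(\alpha_s)\big) = ws\big(v(\alpha_s) - \alpha_s\big).
\]
It remains to expand $v(\alpha_s) - \alpha_s$ telescopically. Writing $v = x_1 x_2 \cdots x_M$ with $x_i = r_{a+i}$, I would use
\[
  v(\alpha_s) - \alpha_s = \sum_{i=1}^{M} x_1\cdots x_{i-1}\big(x_i(\alpha_s) - \alpha_s\big),
\]
and apply the reflection formula $x_i(\alpha_s) = \alpha_s - a_{s x_i}\,\alpha_{x_i}$, so that $x_i(\alpha_s) - \alpha_s = -a_{s x_i}\,\alpha_{x_i}$.

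Finally I would match terms. Substituting back gives
\[
  \beta_q + \beta_{q'} = \sum_{i=1}^{M} -a_{s x_i}\,\big(w s\, x_1\cdots x_{i-1}\big)(\alpha_{x_i}),
\]
and since $r_a = s$ and $r_{a+1} = x_1, r_{a+2}=x_2,\ldots$, the element $w s\, x_1\cdots x_{i-1}$ is exactly the prefix product ending just before $x_i$; hence $\big(w s\, x_1\cdots x_{i-1}\big)(\alpha_{x_i}) = \beta_{x_i}$. Therefore $\beta_q + \beta_{q'} = \sum_i -a_{s x_i}\,\beta_{x_i}$, in which every term with $x_i$ not a neighbor of $s$ vanishes because then $a_{s x_i}=0$; what survives is precisely the sum over the letters $p$ between $q$ and $q'$ that are neighbors of $s$, as claimed.

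The computation is essentially routine once the prefix-product picture is in place, so the only genuine point requiring care is this bookkeeping: keeping track of which group element acts on each simple root, and observing that the intervening-neighbors part of the SIN-property guarantees that each neighbor of $s$ occurs exactly once in the range, so that the stated sum indeed has one term per neighbor. I do not expect a serious obstacle here; rather, the content of the statement is that $\beta$ is an \emph{additive function}, the identity being the mesh relation familiar from the theory of repetition quivers, which is exactly the link to Auslander--Reiten theory exploited in Section~\ref{sec:cyclic action}.
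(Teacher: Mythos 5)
Your proof is correct, and it takes a genuinely different (and more general) route than the paper's. The paper's proof first normalizes: it shifts $\widetilde Q$ so that $q$ becomes the first letter of an occurrence of $Q$, hence $\beta_q=\alpha_s$; it then uses the SIN-property to know that the letters acting nontrivially between $q$ and $q'$ are exactly the neighbors of $s$, each occurring once, and uses that these neighbors pairwise commute to collapse the intervening product into the single element $w_{\langle s \rangle}$, computing $\beta_q+\beta_{q'}=\alpha_s+sw_{\langle s\rangle}(\alpha_s)=\sum_p -a_{sp}\,s(\alpha_p)=\sum_p -a_{sp}\beta_p$. Your telescoping dispenses with all three ingredients --- no base-point shift, no commutativity of the neighbors of $s$, and no ``each neighbor exactly once'' --- since the identity $\beta_q+\beta_{q'}=\sum_i -a_{sx_i}\beta_{x_i}$ holds verbatim for an arbitrary word as soon as $q,q'$ are consecutive occurrences of $s$ (consecutiveness only being needed to exclude intervening letters equal to $s$ itself, where $a_{ss}=2\neq 0$), with non-neighbors dropping out because $a_{sx_i}=0$. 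What your approach buys is robustness and the insight that the mesh relation is pure telescoping, the SIN-property entering only in the cosmetic remark that each neighbor contributes exactly one term; what the paper's normalization buys is making the structural content of the SIN-property (the collapse to $w_{\langle s\rangle}$, matching the knitting algorithm) visible. One caveat, which your argument shares with the paper's rather than adds to it: the paper defines $\beta$ on $\widetilde Q$ by periodically repeating the assignment on $Q\psi(Q)$, and since the product of all letters of $Q\psi(Q)$ is $c^{2k}$ rather than the identity, prefix products from a fixed base point are not literally periodic; so your ``fixed base point'' description of $\beta_{r_i}$, exactly like the paper's ``we can shift $Q$ accordingly'', tacitly assumes the whole segment from $q$ to $q'$ lies in a single prefix-consistent window. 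Within that reading the two arguments prove the same statement, and yours does so with strictly fewer hypotheses invoked.
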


\begin{proof}
  Without loss of generality, we can assume that $q$ is the first letter in some occurrence of $Q$, as otherwise, we can shift $Q$ accordingly. Let $w_{\langle s \rangle}$ be the product of all neighbors of $s$ in the Coxeter graph (in any order, as they all commute). The result follows from a direct calculation.
  $$\beta_q+\beta_{q'} = \alpha_s + sw_{\langle s \rangle}(\alpha_s) = \alpha_s + s\big( \alpha_s + \sum_{p} -a_{sp}\alpha_p \big) = \sum_{p} -a_{sp}s(\alpha_p) = \sum_{p} -a_{sp} \beta_p,$$
  where the first equality comes from the fact that $Q$ has the SIN-property, the second comes from the fact that $p(\alpha_s) = \alpha_s - a_{sp} \alpha_p$, and that any two neighbors of $s$ in the Coxeter graph commute, while the last two are trivial calculations.
\end{proof}

%%%%%%%%%%%%%%%%%%%%%%%%%%%%%%%%%%%%%%%%%%%%%%%%%%%%%%%%%%%%%%%%
%
%  Section: A cyclic action on multi-cluster complexes
%
%%%%%%%%%%%%%%%%%%%%%%%%%%%%%%%%%%%%%%%%%%%%%%%%%%%%%%%%%%%%%%%%

\section{Multi-cluster complexes, Auslander-Reiten quivers, and repetition quivers} \label{sec:cyclic action}

In this section, we connect multi-cluster complexes to Auslander-Reiten quivers and repetition quivers.
This approach emphasizes that the multi-cluster complex can be seen as a cyclic object which does not depend on a particular choice of a Coxeter element.
In type~$A$, this approach corresponds to considering subword complexes on a M\"obius strip, see~\cite{pilaud_multitriangulations_2010}.
We use this connection to introduce an action on vertices and facets of multi-cluster complexes generalizing the natural rotation action on multi-triangulations in type~$A$.

Auslander-Reiten and repetition quivers play a crucial role in \emph{Auslander-Reiten theory} which studies the representation theory of Artinian rings and quivers. The Auslander-Reiten quiver $\Gamma_\Omega$ of a quiver $\Omega$ encodes the irreducible morphisms between isomorphism classes of indecomposable representations of right modules over $\mathbf{k}\Omega$. These were introduced by M.~Auslander and I.~Reiten in~\cite{auslander_almost_1974,AR1975}. We also refer to~\cite{ARS1995,gabriel_auslander-reiten_1980} for further background.
We use these connections to describe a natural cyclic action on multi-cluster complexes generalizing the rotation of the polygon in types $A$ and $B$, see Theorems~\ref{thm:order} and~\ref{thm:cyclic-action}.

\subsection{The Auslander-Reiten quiver}

In types $A$, $D$ and $E$, sorting words of $\wo$ are intimately related to Auslander-Reiten quivers. Starting with a quiver $\Omega_c$ associated to a Coxeter element $c$ (as described in Section~\ref{sec:mainresults}), one can construct combinatorially the Auslander-Reiten quiver $\Gamma_{\Omega_c}$, see~\cite[Section 2.6]{bedard_commutation_1999}. R.~B\'edard then shows how to obtain all reduced expressions for $\wo$ adapted to $\Omega_c$ (i.e., the words equal to $\woc$ up to commutations) using the Auslander-Reiten quiver and a certain tableau. K.~Igusa and R.~Schiffler use these connections in order to obtain their description of $\cwoc$, see~\cite[Sections 2.1--2.3]{igusa_exceptional_2010}. Conversely, given the $c$-sorting word $\woc$, one can recover the Auslander-Reiten quiver $\Gamma_{\Omega_c}$, see~\cite[Proposition 1.2]{zelikson_auslander-reiten_2005} and the discussion preceding it. Algorithm~\ref{algo:auslanderreiten} thus provides a way to construct the Auslander-Reiten quiver in finite types using only Coxeter group theory; it uses results on admissible sequences~\cite{speyer_powers_2009} and words with intervening neighbors~\cite{eriksson_words_2010}.

\begin{algorithm}\label{algo:auslanderreiten_extention}
  The following fourth step added to Algorithm~\ref{algo:auslanderreiten} yields the Auslander-Reiten quiver $\Gamma_{\Omega_c}$ of $\Omega_c$.
  \begin{enumerate}
    \item[(iv)] The vertices of $\Gamma_{\Omega_c}$ are the letters of $\woc$ and two letters $q,q'$ of $\woc$ are linked by an arrow $q\longrightarrow q'$ in $\Gamma_{\Omega_c}$ if and only if $q$ and $q'$ are neighbors in the Coxeter graph and $q$ comes directly before $q'$ in $\woc$ when restricted to the letters $q$ and $q'$.
  \end{enumerate}
\end{algorithm}

Figure~\ref{fig:AR} shows two examples of Auslander-Reiten quivers and how to obtain it using this algorithm.

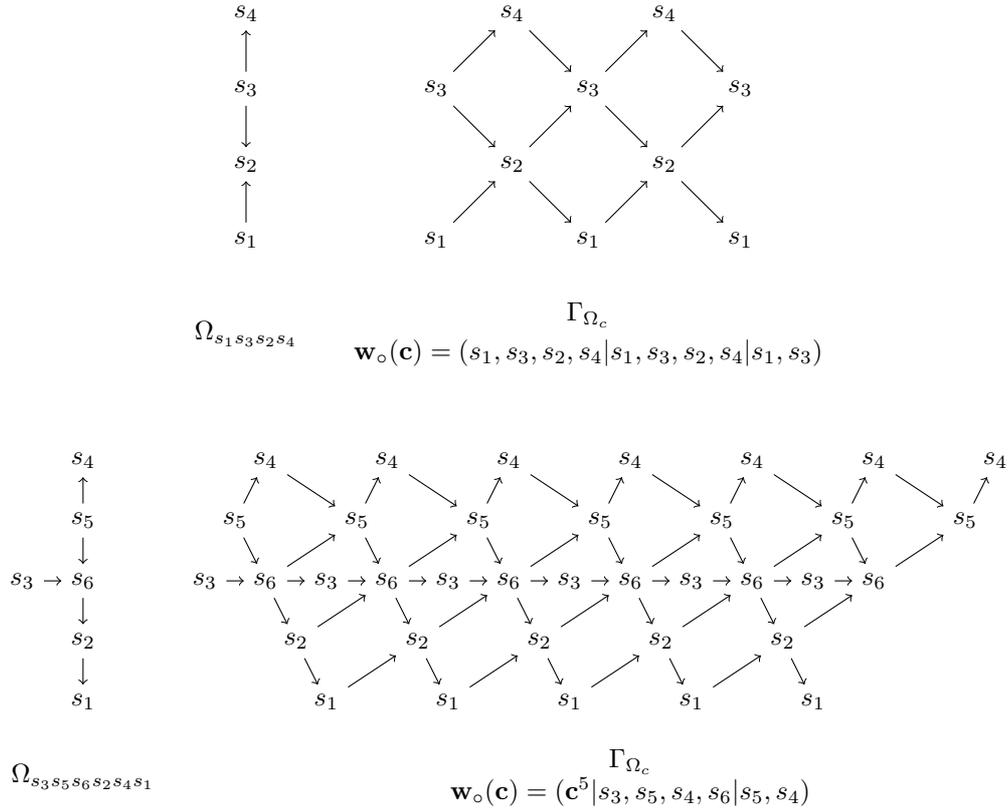
\begin{figure}[!t]
  \begin{center}
  \begin{tikzpicture}[spec/.style={gray}]
  
  \node (s3-1) at (0,2) {$s_3$};
  \node (s4-1) at (1,3) {$s_4$} edge[<-] (s3-1);
  \node (s1-1) at (0,0) {$s_1$};
  \node (s2-1) at (1,1) {$s_2$} edge[<-] (s3-1)  edge[<-] (s1-1);
  
  \node (s3-2) at (2,2) {$s_3$} edge[<-] (s4-1)  edge[<-] (s2-1);
  \node (s4-2) at (3,3) {$s_4$} edge[<-] (s3-2);
  \node (s1-2) at (2,0) {$s_1$} edge[<-] (s2-1);
  \node (s2-2) at (3,1) {$s_2$} edge[<-] (s3-2)  edge[<-] (s1-2);
  
  \node (s3-3) at (4,2) {$s_3$} edge[<-] (s4-2)  edge[<-] (s2-2);
  \node (s1-3) at (4,0) {$s_1$} edge[<-] (s2-2);
  
  \node (s1-0) at (-2.5,0) {$s_1$};
  \node (s2-0) at (-2.5,1) {$s_2$} edge[<-] (s1-0);
  \node (s3-0) at (-2.5,2) {$s_3$} edge[->] (s2-0);
  \node (s4-0) at (-2.5,3) {$s_4$} edge[<-] (s3-0);
  
  \node (c) at (-2.5,-1.25) {$\Omega_{s_1s_3s_2s_4}$};
  \node (wo) at (2,-1) {$\Gamma_{\Omega_c}$};
  \node at (2,-1.5) {$\woc=(s_1,s_3,s_2,s_4 | s_1,s_3,s_2,s_4 | s_1,s_3)$};
  \end{tikzpicture}

\vspace*{25pt}

  \begin{tikzpicture}[scale=0.8,spec/.style={gray}]
  
  \node (s3-1) at (0,2) {$s_3$};
  \node (s5-1) at (0.5,3) {$s_5$};
  \node (s6-1) at (1,2) {$s_6$} edge[<-] (s3-1)  edge[<-] (s5-1);
  \node (s4-1) at (1,4) {$s_4$} edge[<-] (s5-1);
  \node (s2-1) at (1.5,1) {$s_2$} edge[<-] (s6-1);
  \node (s1-1) at (2,0) {$s_1$} edge[<-] (s2-1);

  \node (s3-2) at (2,2) {$s_3$};
  \node (s5-2) at (2.5,3) {$s_5$};
  \node (s6-2) at (3,2) {$s_6$} edge[<-] (s3-2)  edge[<-] (s5-2);
  \node (s4-2) at (3,4) {$s_4$} edge[<-] (s5-2);
  \node (s2-2) at (3.5,1) {$s_2$} edge[<-] (s6-2);
  \node (s1-2) at (4,0) {$s_1$} edge[<-] (s2-2);
  
  \node (s3-3) at (4,2) {$s_3$};
  \node (s5-3) at (4.5,3) {$s_5$};
  \node (s6-3) at (5,2) {$s_6$} edge[<-] (s3-3)  edge[<-] (s5-3);
  \node (s4-3) at (5,4) {$s_4$} edge[<-] (s5-3);
  \node (s2-3) at (5.5,1) {$s_2$} edge[<-] (s6-3);
  \node (s1-3) at (6,0) {$s_1$} edge[<-] (s2-3);

  \node (s3-4) at (6,2) {$s_3$};
  \node (s5-4) at (6.5,3) {$s_5$};
  \node (s6-4) at (7,2) {$s_6$} edge[<-] (s3-4)  edge[<-] (s5-4);
  \node (s4-4) at (7,4) {$s_4$} edge[<-] (s5-4);
  \node (s2-4) at (7.5,1) {$s_2$} edge[<-] (s6-4);
  \node (s1-4) at (8,0) {$s_1$} edge[<-] (s2-4);
  
  \node (s3-5) at (8,2) {$s_3$};
  \node (s5-5) at (8.5,3) {$s_5$};
  \node (s6-5) at (9,2) {$s_6$} edge[<-] (s3-5)  edge[<-] (s5-5);
  \node (s4-5) at (9,4) {$s_4$} edge[<-] (s5-5);
  \node (s2-5) at (9.5,1) {$s_2$} edge[<-] (s6-5);
  \node (s1-5) at (10,0) {$s_1$} edge[<-] (s2-5);
  
  \node (s3-6) at (10,2) {$s_3$};these 
  \node (s5-6) at (10.5,3) {$s_5$};
  \node (s6-6) at (11,2) {$s_6$} edge[<-] (s3-6)  edge[<-] (s5-6);
  \node (s4-6) at (11,4) {$s_4$} edge[<-] (s5-6);
  
  \node (s5-7) at (12.5,3) {$s_5$};
  \node (s4-7) at (13,4) {$s_4$} edge[<-] (s5-7);
  
  \draw[->] (s4-1) -- (s5-2);
  \draw[->] (s6-1) -- (s5-2);
  \draw[->] (s6-1) -- (s3-2);
  \draw[->] (s2-1) -- (s6-2);
  \draw[->] (s1-1) -- (s2-2);
  
  \draw[->] (s4-2) -- (s5-3);
  \draw[->] (s6-2) -- (s5-3);
  \draw[->] (s6-2) -- (s3-3);
  \draw[->] (s2-2) -- (s6-3);
  \draw[->] (s1-2) -- (s2-3);
  
  \draw[->] (s4-3) -- (s5-4);
  \draw[->] (s6-3) -- (s5-4);
  \draw[->] (s6-3) -- (s3-4);
  \draw[->] (s2-3) -- (s6-4);
  \draw[->] (s1-3) -- (s2-4);
  
  \draw[->] (s4-4) -- (s5-5);
  \draw[->] (s6-4) -- (s5-5);
  \draw[->] (s6-4) -- (s3-5);
  \draw[->] (s2-4) -- (s6-5);
  \draw[->] (s1-4) -- (s2-5);
  
  \draw[->] (s4-5) -- (s5-6);
  \draw[->] (s6-5) -- (s5-6);
  \draw[->] (s6-5) -- (s3-6);
  \draw[->] (s2-5) -- (s6-6);

  \draw[->] (s4-6) -- (s5-7);
  \draw[->] (s6-6) -- (s5-7);

  \node (s1-0) at (-2,0) {$s_1$};
  \node (s2-0) at (-2,1) {$s_2$} edge[->] (s1-0);
  \node (s6-0) at (-2,2) {$s_6$} edge[->] (s2-0);
  \node (s5-0) at (-2,3) {$s_5$} edge[->] (s6-0);
  \node (s4-0) at (-2,4) {$s_4$} edge[<-] (s5-0);
  \node (s3-0) at (-3,2) {$s_3$} edge[->] (s6-0);

  \node (c) at (-2,-1.25) {$\Omega_{s_3s_5s_6s_2s_4s_1}$};
  \node (wo) at (7,-1) {$\Gamma_{\Omega_c}$};
  \node at (7,-1.5) {$\woc=(\c^5 | s_3,s_5,s_4,s_6 | s_5,s_4)$};
    
  \end{tikzpicture}
  \caption{\label{fig:AR} Two examples of Auslander-Reiten quivers of types $A_4$ and $E_6$.}
  \end{center}
\end{figure}

\subsection{The repetition quiver} Next, we define the repetition quiver.

\begin{definition}[{\cite[Section 2.2]{keller_cluster_2010}}]
  The \Dfn{repetition quiver} $\Z \Omega$ of a quiver $\Omega$ consists of vertices $(i,v)$ for a vertex $v$ of $\Omega$ and $i \in \Z$. The arrows of $\Z \Omega$ are given by $(i,v) \longrightarrow (i,v')$ and $(i,v') \longrightarrow (i+1,v)$, for any arrow $v \longrightarrow v'$ in $\Omega$.
\end{definition}

For a Coxeter element $c$, the repetition quiver $\Z \Omega_c$ turns out to be a bi-infinite sequence of Auslander-Reiten quivers $\Gamma_{\Omega_c}$ and $\Gamma_{\Omega_{\psi(\c)}}$ linked at the initial $\c$ and the final $\psi(\c)$. More precisely, the repetition quiver $\Z \Omega_c$ can be obtained applying the procedure described in Algorithm~\ref{algo:auslanderreiten_extention} to the bi-infinite word
$$\widetilde \woc = \cdots \woc \hspace{5pt} \psi(\woc) \hspace{5pt} \woc \hspace{5pt} \psi(\woc) \cdots.$$
As discussed in Remark~\ref{re:choosing}, the word $\widetilde \woc$ does not depend on the choice of a Coxeter element. Therefore, the repetition quiver is independent of the choice of Coxeter element, as expected. The repetition quiver comes equipped with the \Dfn{Auslander-Reiten translate} $\tau$ given by $\tau(i,v) = (i-1,v)$. A second natural map acts on the vertices of the repetition quiver: the \Dfn{shift operation} $[1]: \Z \Omega_c \longrightarrow \Z \Omega_c$ which sends a vertex in $\woc$ or $\psi(\woc)$ to the corresponding vertex in the next (to the right) copy of~$\psi(\woc)$ or of~$\woc$ respectively. In Figure~\ref{fig:repetition}, we present an example of a repetition quiver of type $A_4$. Copies of the Auslander-Reiten quivers $\Gamma_{\Omega_c}$ and $\Gamma_{\Omega_{\psi(\c)}}$ are separated by dashed arrows. The Auslander-Reiten translate $\tau$ sends a vertex to the one located directly to its left. One orbit of the shift operation shown in {\bf bold}, we have $(4,s_1) = [1](1,s_4) = [2](-1,s_1)$.

\begin{figure}[!htbp]
  \begin{center}
  \begin{tikzpicture}[scale=0.85]
  \node(left1) at (-1.25,1.5){};
  \node(left2) at (-0.5,1.5){} edge[line width=1pt,dotted,-] (left1);

  \node (s2-0) at (-1,1) {};
  \node (s4-0) at (-1,3) {};

  \node (s1-1) at (0,0) {\small$(-2,s_1)$} edge[dotted,gray,<-,thick] (s2-0);
  \node (s3-1) at (0,2) {\small$(-2,s_3)$} edge[dotted,gray,<-,thick] (s4-0) edge[dotted,gray,<-,thick] (s2-0);
  \node (s2-1) at (1,1) {\small$(-2,s_2)$} edge[<-,thick] (s3-1)  edge[<-,thick] (s1-1);
  \node (s4-1) at (1,3) {\small$(-2,s_4)$} edge[<-,thick] (s3-1);
  
  \node (s1-2) at (2,0) {\small${\bf (-1,s_1)}$} edge[<-,thick] (s2-1);
  \node (s3-2) at (2,2) {\small$(-1,s_3)$} edge[<-,thick] (s4-1)  edge[<-,thick] (s2-1);
  \node (s2-2) at (3,1) {\small$(-1,s_2)$} edge[<-,thick] (s3-2)  edge[<-,thick] (s1-2);
  \node (s4-2) at (3,3) {\small$(-1,s_4)$} edge[<-,thick] (s3-2);
  
  \node (s1-3) at (4,0) {\small$(0,s_1)$} edge[<-,thick] (s2-2);
  \node (s3-3) at (4,2) {\small$(0,s_3)$} edge[<-,thick] (s4-2)  edge[<-,thick] (s2-2);

%%%%%%%%%%%%%%%%%%%%%%%%%%

  \node (s2-3) at (5,1) {\small$(0,s_2)$} edge[dashed,<-] (s3-3)  edge[dashed,<-] (s1-3);
  \node (s4-3) at (5,3) {\small$(0,s_4)$} edge[dashed,<-] (s3-3);
  \node (s1-4) at (6,0) {\small$(1,s_1)$} edge[<-,thick] (s2-3);
  \node (s3-4) at (6,2) {\small$(1,s_3)$} edge[<-,thick] (s2-3)  edge[<-,thick] (s4-3);
  
  \node (s2-4) at (7,1) {\small$(1,s_2)$} edge[<-,thick] (s3-4)  edge[<-,thick] (s1-4);
  \node (s4-4) at (7,3) {\small${\bf (1,s_4)}$} edge[<-,thick] (s3-4);
  \node (s1-5) at (8,0) {\small$(2,s_1)$} edge[<-,thick] (s2-4);
  \node (s3-5) at (8,2) {\small$(2,s_3)$} edge[<-,thick] (s2-4)  edge[<-,thick] (s4-4);
  
  \node (s2-5) at (9,1) {\small$(2,s_2)$} edge[<-,thick] (s3-5)  edge[<-,thick] (s1-5);
  \node (s4-5) at (9,3) {\small$(2,s_4)$} edge[<-,thick] (s3-5);

%%%%%%%%%%%%%%%%%%%%%%%%%%

  \node (s1-6) at (10,0) {\small$(3,s_1)$} edge[dashed,<-] (s2-5);
  \node (s3-6) at (10,2) {\small$(3,s_3)$} edge[dashed,<-] (s4-5)  edge[dashed,<-] (s2-5);
  \node (s2-6) at (11,1) {\small$(3,s_2)$} edge[<-,thick] (s1-6)  edge[<-,thick] (s3-6);
  \node (s4-6) at (11,3) {\small$(3,s_4)$} edge[<-,thick] (s3-6);
  
  \node (s1-7) at (12,0) {\small${\bf (4,s_1)}$} edge[<-,thick] (s2-6);
  \node (s3-7) at (12,2) {\small$(4,s_3)$} edge[<-,thick] (s2-6)  edge[<-,thick] (s4-6);
  \node (s2-7) at (13,1) {\small$(4,s_2)$} edge[<-,thick] (s1-7)  edge[<-,thick] (s3-7);
  \node (s4-7) at (13,3) {\small$(4,s_4)$} edge[<-,thick] (s3-7);
  
  \node (s1-8) at (14,0) {\small$(5,s_1)$} edge[<-,thick] (s2-7);
  \node (s3-8) at (14,2) {\small$(5,s_3)$} edge[<-,thick] (s2-7)  edge[<-,thick] (s4-7);

  \node(right1) at (15.25,1.5){};
  \node(right2) at (14.5,1.5){} edge[line width=1pt,dotted,-] (right1);

  \node (s2-9) at (15,1) {} edge[dotted,gray,<-,thick] (s1-8)  edge[dotted,gray,<-,thick] (s3-8);
  \node (s4-9) at (15,3) {} edge[dotted,gray,<-,thick] (s3-8);

  \node (a) at (2,-1) {$\underbrace{\hspace{4.1cm}}_{\Gamma_{\Omega_c}}$};
  \node (a) at (7,-1.05) {$\underbrace{\hspace{4.1cm}}_{\Gamma_{\Omega_{\psi(\c)}}}$};
  \node (a) at (12,-1) {$\underbrace{\hspace{4.1cm}}_{\Gamma_{\Omega_c}}$};

  \draw[dashed,red] (-1.1,-0.3)--(2.6,3.3)--(5.3,3.3)--(9,-0.3) -- cycle;
  \draw[dashed,blue] (6.5,-0.5)--  (8,1) -- (7,2) -- (8.5,3.5) -- (13.5,3.5) -- (15,2) -- (14,1) -- (15.5,-0.5) -- cycle;
  \end{tikzpicture}
  \caption{\label{fig:repetition} The repetition quiver of type $A_4$ with the quiver $\Omega_c$ associated to the Coxeter element $c = s_1s_3s_2s_4$.}
  \end{center}
\end{figure}
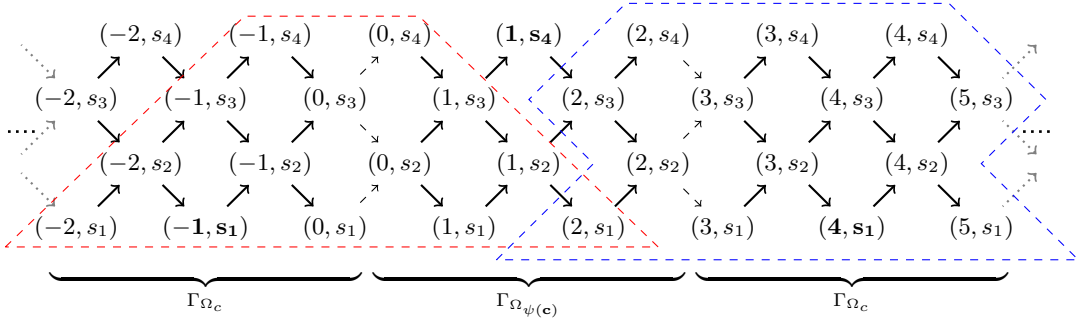

\begin{remark}
  Vertices in the Auslander-Reiten quiver correspond to (isomorphism classes of) indecomposable representations of $\Omega_c$, and thus have a \emph{dimension vector} attached. By the \emph{knitting algorithm}, the dimension vector at a vertex $V = (i,v)$ of $\Gamma_{\Omega_c}$ plus the dimension vector at the vertex $\tau(V)$ equals the sum of all dimension vectors at vertices $V'$ for which $ \tau(V) \longrightarrow V' \longrightarrow V$ are arrows in $\Gamma_{\Omega_c}$, see~\cite[Section~10.2]{gabriel_representations_1997}. This procedure is intimately related to the SIN-property, which ensures that this sum is indeed over all neighbors of $v$. Moreover, Proposition~\ref{pr:last reflection on SIN words} implies that this property holds as well for the root $\beta_q$ attached to a letter $q$ in the bi-infinite sequence~$\widetilde Q$. This yields the well known property that the dimension vector and the corresponding root coincide.
\end{remark}

The following proposition describes words for the multi-cluster complex using the repetition quiver, the Auslander-Reiten translate, and the shift operation.
\begin{proposition}\label{prop:auslanderreiten}
  Let $\Omega_c$ be a quiver corresponding to a Coxeter element $c$. Words for the multi-cluster complex are obtained from the bi-infinite word $\widetilde \woc$ by setting $\tau^k = [1]$. Choosing a particular fundamental domain for this identification corresponds to choosing a particular Coxeter element. In other words, words for multi-cluster complexes are obtained by a choice of linear extension of a fundamental domain of the identification $\tau^k = [1]$ in the repetition quiver.
\end{proposition}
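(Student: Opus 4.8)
\emph{Proof proposal.} The plan is to carry out the whole argument inside the repetition quiver $\Z\Omega_c$, whose vertices are identified with the occurrences of letters in the bi-infinite word $\widetilde\woc$ through Algorithm~\ref{algo:auslanderreiten_extention}. Under this identification $\tau$ sends an occurrence of a generator $s$ to the previous occurrence of $s$ in $\widetilde\woc$, while $[1]$ sends an occurrence inside a block $\woc$ (resp.\ $\psi(\woc)$) to the corresponding occurrence inside the next block $\psi(\woc)$ (resp.\ $\woc$), relabelled by $\psi$. Both are automorphisms of $\Z\Omega_c$ compatible with its translation structure, so they commute.

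First I would establish the genuine structural relation $[1]^2=\tau^{-h}$ in $\Z\Omega_c$: a full period $\woc\,\psi(\woc)$ of $\widetilde\woc$ contains, for each generator $s$, exactly $\phi_{\woc}(s)+\phi_{\woc}(\psi(s))=h$ occurrences of $s$ by Remark~\ref{rem:computation}, and since $[1]^2$ is the shift by one whole period while $\tau$ steps back one occurrence of every generator, the two maps agree up to the exponent $-h$. Imposing the further identification $\tau^k=[1]$ then amounts to quotienting by the free $\Z$-action $H=\langle\tau^k[1]^{-1}\rangle$, and in the finite quotient $\Z\Omega_c/H$ the two relations combine to give $\tau^{2k+h}=\mathrm{id}$. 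A direct count shows this quotient has $\tfrac{n(2k+h)}{2}=nk+N$ vertices, with exactly $2k+h=(k+\phi_{\woc}(s))+(k+\phi_{\woc}(\psi(s)))$ of them for each pair $\{s,\psi(s)\}$; this matches the multiplicities of $s$ and $\psi(s)$ in $\ckwoc=\c^k\woc$ and is what pins the exponent to $k$.

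Second, I would exhibit an explicit convex fundamental domain $D$ for the $H$-action containing these occurrences, with $\c$ as its set of minimal letters and $\psi(\c)$ as its set of maximal letters. Because an arrow $q\to q'$ of the repetition quiver records that $q$ directly precedes $q'$ among the occurrences of the two neighbouring generators, the underlying poset of $D$ is exactly the heap of a word, so a linear extension of $D$ is a word well-defined up to commutations. By construction $D$ has the intervening-neighbours property and the correct generator multiplicities, whence Theorem~\ref{thm:if_part_conj} identifies its linear extensions with the commutation class of $\ckwoc$; thus every linear extension of $D$ is a word for $\DWk$. Finally, any two fundamental domains differ by an element of $H$, so their linear extensions differ by a rotation (Proposition~\ref{pr:rotated}) together with commutations (Proposition~\ref{pr:up to commutations}); by Theorem~\ref{theorem:independent_of_c} they define the same multi-cluster complex, and by Remark~\ref{re:choosing} prescribing where $D$ begins---equivalently, choosing the linear extension that starts with a given $\c$---is exactly choosing the corresponding Coxeter element.

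The hard part will be the third step: making the fundamental domain explicit as a convex sub-poset and matching the quiver's arrow relation with the heap order of the $\mathrm{SIN}$-word so that Theorem~\ref{thm:if_part_conj} applies verbatim. The relation $[1]^2=\tau^{-h}$ and the multiplicity bookkeeping coming from Remark~\ref{rem:computation} are the technical core that forces precisely the identification $\tau^k=[1]$, rather than any other relation between $\tau$ and $[1]$.
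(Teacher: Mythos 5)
Your proposal is correct and keeps the paper's overall skeleton, but it justifies the key step by a genuinely different route. The paper's own proof is three sentences: a fundamental domain of the identification $\tau^k=[1]$ consists of $k$ copies of $\Omega_c$ together with one copy of $\Gamma_{\Omega_c}$; this domain \emph{is} the quiver that Algorithm~\ref{algo:auslanderreiten_extention} attaches to the word $\ckwoc$; hence its linear extensions are, essentially by definition, the words equal to $\ckwoc$ up to commutations. You instead recognize the linear extensions intrinsically: the relation $[1]^2=\tau^{-h}$ (correct, and a genuine addition --- it is what makes the quotient finite, gives $\tau^{2k+h}=\mathrm{id}$ in agreement with Theorem~\ref{thm:order}, and yields the vertex count $n(2k+h)/2=nk+N$ via Remark~\ref{rem:computation}), followed by an appeal to the SIN-characterization, Theorem~\ref{thm:if_part_conj}, as the key lemma. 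The paper's route buys brevity by anchoring the fundamental domain to the prescribed word $\ckwoc$; yours buys robustness, since an arbitrary admissible fundamental domain is certified to linearize to a multi-cluster word without first matching it against a given word, which makes the clause ``choice of domain $=$ choice of Coxeter element'' transparent and meshes with Remark~\ref{re:choosing}; your handling of that clause via Propositions~\ref{pr:rotated} and~\ref{pr:up to commutations} and Theorem~\ref{theorem:independent_of_c} is sound.

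Three repairs are needed, all local. First, Theorem~\ref{thm:if_part_conj} requires the full SIN-property: intervening neighbors of $Q\psi(Q)$, not only of $Q$, together with $\delta(Q)=\wo$; neither is implied by multiplicities alone as you state it. Both do hold --- $Q\psi(Q)$ is, up to commutations, a window of the periodic word $\widetilde\woc$, and $\delta(Q)=\wo$ follows from Lemma~\ref{le:speyer} because $Q$ is up to commutations a prefix of $({\bf c'})^\infty$ of length at least $N$ --- but you must verify them; alternatively you can bypass Theorem~\ref{thm:if_part_conj} entirely, since intervening neighbors already makes $Q$ a prefix of $({\bf c'})^\infty$ up to commutations (the observation used in the paper's proof of Theorem~\ref{thm:if_part_conj}), and a prefix of length $kn+N$ equals $({\bf c'})^k\woword({\bf c'})$ up to commutations. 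Second, the letters of $\c$ are generally not an antichain of the heap (for $\c=(s_1,s_2)$ in type $A_2$ there is an arrow $s_1\longrightarrow s_2$), so ``$\c$ as the set of minimal letters'' should read ``the first occurrences of the generators'', and dually for $\psi(\c)$ via Proposition~\ref{prop:suffix}. Third, your per-pair count $(k+\phi_{\woc}(s))+(k+\phi_{\woc}(\psi(s)))=2k+h$ double-counts when $\psi(s)=s$, where the orbit count is $k+h/2$; the total $nk+N$ is unaffected. Finally, for linear extensions of the quiver order to recover exactly the commutation class you need consecutive occurrences of the \emph{same} generator to be comparable; intervening neighbors provides a non-commuting letter between them (and a generator with no neighbors is central, so its placement is immaterial). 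This is precisely the ``hard part'' you flagged, and it goes through.
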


\begin{proof}
  With the identification $[1]V = \tau^kV$ in the repetition quiver, a fundamental domain will consist of $k$ copies of $\Omega_c$ and one copy of the Auslander-Reiten quiver $\Gamma_{\Omega_c}$. This fundamental domain is exactly the quiver formed from the word $\c^k \woc$ using Algorithm~\ref{algo:auslanderreiten_extention}. As linear extensions of this quiver correspond to words equal to $\c^k \woc$ up to commutations, the result follows.
\end{proof}

The {\color{red}red} and the {\color{blue}blue} boxes in Figure~\ref{fig:repetition} mark two particular choices of a fundamental domain for the multi-cluster complex of type $A_4$ with $k=1$ corresponding to the Coxeter elements $s_1s_2s_3s_4$ and $s_1s_3s_2s_4$ respectively.

\subsection{The Auslander-Reiten translate on multi-cluster complexes}\label{sec:ART}

The Auslander-Reiten translate gives a cyclic action on the vertices and facets of a multi-cluster complex. This action corresponds to natural actions on multi-triangulations in types $A$ and $B$, and is well studied in the case of cluster complexes.

\begin{definition}\label{def:cyclic action}
Let $Q = \c^k \woc$. The permutation $\Theta : Q\ \tilde\longrightarrow\ Q$ is given by sending a letter $q_i = s$ to the next occurrence of $s$ in $Q$, if possible, and to the first occurrence of $\psi(s)$ in $Q$ otherwise.
\end{definition}
Observe that in types $ADE$, the operation $\Theta$ corresponds to the inverse of the Auslander-Reiten translate, $\Theta = \tau^{-1}$ when considered within the repetition quiver.
\begin{proposition}
  The permutation $\Theta$ induces a cyclic action on the facets of $\Delta(Q,\wo)$.
\end{proposition}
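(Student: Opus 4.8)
The plan is to exhibit $\Theta$ (up to taking an inverse) as a composition of the two elementary isomorphisms of subword complexes established in Section~\ref{sec:spherical subword complexes}, namely the commutation isomorphism of Proposition~\ref{pr:up to commutations} and the rotation isomorphism of Proposition~\ref{pr:rotated}. Since each of these is a bijection of facets onto facets, so is any composition of them, and hence $\Theta$ permutes the facets of $\Delta(Q,\wo)$, where $Q=\ckwoc=(q_1,\dots,q_r)$ with $r=kn+N$. As $\Theta$ is a permutation of the finite set $Q$, it has finite order, so this is exactly the statement that $\langle\Theta\rangle$ acts cyclically on the set of facets.

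First I would analyze the single-letter rotation. By Proposition~\ref{pr:rotated}, for any word $Q=(s,q_2,\dots,q_r)$ the rotation $\rotatedword{Q}{s}=(q_2,\dots,q_r,\psi(s))$ gives an isomorphic complex, the isomorphism fixing $q_2,\dots,q_r$ and sending the initial $s$ to the final $\psi(s)$ (no hypothesis on $s$ is needed since the target is $\wo$). Iterating this $n$ times, each time rotating away the current first letter, produces a map $R$ transforming $Q$ into the word $Q^{(n)}=(q_{n+1},\dots,q_r,\psi(q_1),\dots,\psi(q_n))$; on positions, $R$ sends each $i>n$ to $i-n$ and each $i\le n$ to $r-n+i$, the latter carrying $q_i$ to $\psi(q_i)$ through the wrap-around. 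Next I would identify $Q^{(n)}$ with $Q$: since the first $n$ letters of $Q$ are the reduced word $\c$ of a Coxeter element, applying the computation in the proof of Theorem~\ref{theorem:independent_of_c} (equivalently Proposition~\ref{prop:init_letter}) once shows that a single rotation replaces $c=c_1\cdots c_n$ by the cyclically shifted Coxeter element $c_2\cdots c_n c_1$. After $n$ rotations the Coxeter element returns to $c$, so $Q^{(n)}$ equals $Q$ up to commutations. Composing $R$ with the commutation isomorphism $\iota$ of Proposition~\ref{pr:up to commutations}, which matches the $m$-th occurrence of each generator in $Q^{(n)}$ with its $m$-th occurrence in $Q$ (well defined because commutations never reorder two copies of the same generator), yields a self-isomorphism $\iota\circ R$ of $\Delta(Q,\wo)$.

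It then remains to check that $\iota\circ R$ is precisely $\Theta^{-1}$, i.e.\ that tracking an occurrence of a letter $s$ through the $n$ rotations and back through $\iota$ lands it on the previous occurrence of $s$ in $Q$, or, when $s$ has no earlier occurrence, on the last occurrence of $\psi(s)$, in accordance with Definition~\ref{def:cyclic action}. This is the main obstacle, since $\iota$ does not preserve absolute positions: one must argue via the rank of each letter among the occurrences of its generator, using that these ranks are preserved by commutations and that the first block $\c$ contributes exactly one occurrence of each generator, which is what converts the raw position shift of $R$ into the ``previous occurrence'' map. I would verify this matching first for $k=1$ and then note that the additional copies of $\c$ merely lengthen the chains of equal letters without altering the argument; already the case $B_2$ of Example~\ref{ex:B2}, where $r=6$, $\psi=\mathrm{id}$, $\Theta=(1\,3\,5)(2\,4\,6)$, $Q^{(n)}=Q$ and $R\colon i\mapsto i-2$, exhibits $\iota\circ R=\Theta^{-1}$. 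Once the matching is established, $\Theta^{-1}=\iota\circ R$ is an isomorphism of $\Delta(Q,\wo)$, hence so is $\Theta$, and the cyclic group $\langle\Theta\rangle$ acts on the facets as claimed. (In types $ADE$ this action is exactly the inverse Auslander--Reiten translate on the repetition quiver via Proposition~\ref{prop:auslanderreiten}, which gives the conceptual reason; the argument above supplies the uniform, type-free proof.)
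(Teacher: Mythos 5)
Your proof is correct and takes essentially the same route as the paper's: the paper likewise composes the rotation isomorphism of Proposition~\ref{pr:rotated} (applied $n$ times) with the commutation identification, invoking Proposition~\ref{prop:init_letter} to see that the $n$-fold rotation of $\c^k\woc$ equals $\c^k\woc$ up to commutations, and then observes that $\Theta$ is by construction the inverse of the resulting rotation automorphism. The only difference is that the paper leaves the occurrence-rank bookkeeping showing $\iota\circ R=\Theta^{-1}$ implicit (``by construction''), whereas you spell it out.
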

\begin{proof}
  By Proposition~\ref{pr:rotated}, the subword complexes $\Delta(Q,\wo)$ and $\Delta(\rotatedword{Q}{s},\wo)$ are isomorphic for an initial letter $s$ in $Q$. Proposition~\ref{prop:init_letter} asserts that $\c^k \woc$ and the rotated word obtained from $\c^k \woc$ by rotating $n$ times are equal up to commutations. By construction, $\Theta$ is the automorphism of $\Delta(Q,\wo)$ given by inverse rotation of $\c$.
\end{proof}

\begin{example}
  As in Example~\ref{ex:A4}, consider $c = s_1s_3s_2s_4$ and $Q=\cwoc = (q_i : 1 \leq i \leq 14) = (\c^2|s_1s_3s_2s_4|s_1,s_3)$. After rotating along all letters in $\c$ from the right, we obtain the word $(s_3s_1s_4s_2|\c^2|s_1,s_3)$, so we have to reorder the initial $4$ letters using commutations to obtain again $(\c^3|s_1,s_3)$. Therefore, $\Theta$ permutes the letter of $Q$ along the permutation of the indices given by
  $$\left( \begin{array}{cccccccccccccc} 1&2&3&4&5&6&7&8&9&10&11&12&13&14\\5&6&7&8&9&10&11&12&13&14&2&1&4&3\end{array}\right).$$
  Here is an example of an orbit of $\Theta$.
  $$\begin{array}{llll} \{ q_1,q_2,q_3,q_4 \} \quad\mapsto_\Theta &  \{ q_{5},q_{6},q_{7},q_8 \} \quad\mapsto_\Theta & \{ q_{9},q_{10},q_{11},q_{12} \}  \quad\mapsto_\Theta & \{ q_{13},q_{14},q_{2},q_1\} \qquad \\ \mapsto_\Theta \quad \{q_4,q_{3},q_6,q_5\} & \mapsto_\Theta \quad \{ q_8,q_{7},q_{10},q_9 \} & \mapsto_\Theta \quad \{ q_{12},q_{11},q_{14},q_{13} \} &  \mapsto_\Theta \quad \{ q_{1},q_{2},q_{3},q_4 \}.
  \end{array}$$
\end{example}
To relate the permutation $\Theta$ to clusters, we recall the definition of bipartite Coxeter elements; consider a bipartition of the set $S=S_-\sqcup S_+$ such that any two generators in $S_\epsilon$ commute (this is possible since the graph of the Coxeter group is a tree), then form the Coxeter element $c^*=c_-c_+$, where $c_\epsilon=\prod_{s\in S_\epsilon}s$. Using the bijection $\mathsf{Lr}_{c^*}$ between letters in $\c^*\woword({\bf c^*})$ and almost positive roots, the cyclic action induced by $\Theta$ is equal to the action induced by the \Dfn{tropical Coxeter element}
$$\sigma_{c^*} := \prod_{s \in S_-} \sigma_s \prod_{s \in S_+} \sigma_s$$
on almost positive roots, see Section~\ref{sec:cluster complexes} for the definition of $\sigma_s$, and~\cite[Section~5.2]{armstrong_generalized_2009} for more details about tropical Coxeter elements. In the case of cluster complexes, S.~Fomin and N.~Reading computed the order of $\Theta$ \cite[Theorem~4.14]{fomin_root_2007}. Since the words $\cwoc$ are all connected via rotation along initial letters, the order of $\Theta$ does not depend on a specific choice of Coxeter element.
\begin{theorem}\label{thm:order}
  For $Q = \ckwoc$, the order of $\Theta$ is given by
  $$\operatorname{ord}(\Theta) = \begin{cases}
    k + h/2 & \text{ if } \wo = -{\bf 1}, \\
    2k + h & \text{ if } \wo \neq -{\bf 1}. \\
  \end{cases}$$
\end{theorem}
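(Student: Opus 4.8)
The plan is to compute $\operatorname{ord}(\Theta)$ directly as the order of the permutation $\Theta$ of the $kn+N$ letters of $Q=\ckwoc$, by determining its cycle structure explicitly. The only external inputs needed are the occurrence counts of each generator in $Q$, the identity $\phi_{\woc}(s)+\phi_{\woc}(\psi(s))=h$ from Remark~\ref{rem:computation}, and the elementary fact that $\psi=\operatorname{id}$ on $S$ if and only if $\wo=-\Id$ (which follows from $\wo(\alpha_s)=-\alpha_{\psi(s)}$).

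First I would record the relevant multiplicities. Since $\c$ contains every generator exactly once, each $s\in S$ occurs exactly $k$ times in $\c^k$ and hence $k+\phi_{\woc}(s)$ times in $Q=\c^k\woc$. By its definition, $\Theta$ maps each occurrence of a letter $s$ to the next occurrence of $s$ to the right, and sends the last occurrence of $s$ to the first occurrence of $\psi(s)$. Therefore, starting at the first occurrence of $s$ and iterating $\Theta$, one runs through all occurrences of $s$, then jumps to the first occurrence of $\psi(s)$, runs through all of those, and finally, since $\psi$ is an involution, the last occurrence of $\psi(s)$ is sent back to the first occurrence of $s$. Thus the $\Theta$-orbit of any letter equal to $s$ is a single cycle through all occurrences of $s$ and of $\psi(s)$. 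Its length is $\big(k+\phi_{\woc}(s)\big)+\big(k+\phi_{\woc}(\psi(s))\big)$ when $s\neq\psi(s)$, and $k+\phi_{\woc}(s)$ when $s=\psi(s)$.

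Next I would simplify these cycle lengths using Remark~\ref{rem:computation}, which gives $\phi_{\woc}(s)+\phi_{\woc}(\psi(s))=h$. For a pair $s\neq\psi(s)$ the cycle length becomes $2k+h$, and for a fixed point $s=\psi(s)$ one has $\phi_{\woc}(s)=h/2$, so the cycle length is $k+h/2$. The order of $\Theta$ is the least common multiple of these cycle lengths. If $\wo=-\Id$, then $\psi=\operatorname{id}$, every generator is a fixed point, all cycles have length $k+h/2$, and $\operatorname{ord}(\Theta)=k+h/2$. If $\wo\neq-\Id$, then $\psi\neq\operatorname{id}$, so at least one pair $s\neq\psi(s)$ occurs and contributes a cycle of length $2k+h$; since any fixed-point cycle length $k+h/2$ divides $2k+h=2(k+h/2)$, the least common multiple is $2k+h$, giving $\operatorname{ord}(\Theta)=2k+h$.

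The computation itself is short; the only point requiring care is the precise description of the cycle structure, namely that the occurrences of $s$ and of $\psi(s)$ interleave into exactly one cycle rather than splitting. This follows cleanly from the definition of $\Theta$ together with $\psi^2=\operatorname{id}$. Finally, I would remark that this permutation order agrees with the order of the induced cyclic action on facets, consistent with the identification $\tau^{k}=[1]$ of Proposition~\ref{prop:auslanderreiten} and, for $k=1$, with the order of rotation on $c$-clusters computed by S.~Fomin and N.~Reading in~\cite[Theorem~4.14]{fomin_root_2007}.
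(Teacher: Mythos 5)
Your proof is correct, and it takes a genuinely different route from the paper's. The paper's proof is a two-line length count: since $\Theta$ is by construction the inverse rotation along $\c$ inside the bi-infinite word, its order is the length of $Q$ (namely $kn+nh/2$) divided by the length $n$ of $\c$ when $\wo=-\Id$, and twice that quotient otherwise, which gives $k+h/2$ and $2k+h$. You instead compute the explicit cycle decomposition of $\Theta$ as a permutation of the letters of $Q$: each orbit runs through all occurrences of $s$ and of $\psi(s)$, so it has length $2k+\phi_{\woc}(s)+\phi_{\woc}(\psi(s))=2k+h$ when $\psi(s)\neq s$, and $k+\phi_{\woc}(s)=k+h/2$ when $\psi(s)=s$, by the identity $\phi_{\woc}(s)+\phi_{\woc}(\psi(s))=h$ of Remark~\ref{rem:computation}; the order is then the least common multiple of these cycle lengths. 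Your route buys something real: it makes explicit why no smaller power of $\Theta$ is the identity --- in the case $\wo=-\Id$ all cycles have the same length $k+h/2$, and otherwise some cycle attains the full length $2k+h$ while any fixed-point cycle length $k+h/2$ divides it --- a point the paper's division-of-lengths argument leaves implicit. It also quietly handles the parity subtlety correctly: a cycle of length $k+h/2$ only occurs when some $s=\psi(s)$, which forces $h$ even. What the paper's approach buys in exchange is brevity and a conceptual picture, since it identifies $\Theta$ with rotation by one copy of $\c$ (equivalently, with $\tau^{-1}$ in the repetition quiver via Proposition~\ref{prop:auslanderreiten}), whereas your argument is self-contained at the level of the permutation and needs only the occurrence counts of generators in $\ckwoc$.
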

\begin{proof}
To obtain the order of this action, we consider the length of $Q$ divided by the length of $\c$ if $w_\circ \equiv -{\bf 1}$, and twice the length of $Q$ divided by the length of $\c$ otherwise. We have already seen in Algorithm~\ref{algo:auslanderreiten} that the length of $Q$ is given by $kn + nh/2$. As the length of $\c$ is given by $n$, the result follows.
\end{proof} 
\begin{remark}
  The action induced by the tropical Coxeter element on facets of the cluster complex was shown by S.-P.~Eu and T.-S.~Fu to exhibit a \emph{cyclic sieving phenomenon}~\cite{EF2008}. Therefore, the cyclic action induced by $\Theta$ exhibits a cyclic sieving phenomenon for facets of the cluster complex $\Delta(\cwoc,\wo)$ and \emph{any} Coxeter element $c$.
\end{remark}
Finally, for types $A$ and $B$, the cyclic action $\Theta : Q \tilde\longrightarrow Q$ corresponds to the cyclic action induced by rotation of the associated polygons.
\begin{theorem}\label{thm:cyclic-action}
Let $Q = \ckwoc$. In type $A_{m-2k-1}$, the cyclic action $\Theta$ on letters in $Q$ corresponds to the cyclic action induced by rotation on the set of $k$-relevant diagonals of a convex $m$-gon. In type $B_{m-k}$, the cyclic action $\Theta$ corresponds to the cyclic action induced by rotation on the set of $k$-relevant  centrally symmetric diagonals of a regular convex $2m$-gon.
\end{theorem}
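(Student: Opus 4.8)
The plan is to verify directly that, under the explicit bijections of Section~\ref{sec:results}, the combinatorial permutation $\Theta$ of Definition~\ref{def:cyclic action} coincides with a single clockwise rotation $\rho$ of the polygon. Both $\Theta$ and $\rho$ permute the same underlying set --- the $k$-relevant diagonals of the $m$-gon in type $A$, and the $k$-relevant centrally symmetric pairs of diagonals of the $2m$-gon in type $B$ --- so it suffices to check that they agree on every element. Recall that the bijection of Section~\ref{sec:results} sends the $\ell$-th copy of a generator $s_i$ in $Q = \ckwoc$ to $\rho^{\ell-1}$ applied to a base diagonal (resp.\ base symmetric pair) $[a_i,b_i]$, where $\rho$ is clockwise rotation by one vertex, i.e.\ $[p,q]\mapsto[p+1,q+1]$ modulo $m$ (resp.\ modulo $2m$).

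First I would dispose of the generic case. If $q$ is the $\ell$-th copy of $s_i$ with $\ell < \phi_Q(s_i)$, then $\Theta(q)$ is the $(\ell+1)$-th copy of $s_i$, which the bijection identifies with $\rho^{\ell}([a_i,b_i]) = \rho\bigl(\rho^{\ell-1}([a_i,b_i])\bigr)$; thus $\Theta$ agrees with $\rho$ on $q$ by the very construction of the bijection, with no computation required. The entire content of the theorem is therefore concentrated in the \emph{wrap-around} case, where $q$ is the last copy of $s_i$ and $\Theta(q)$ is the first copy of $\psi(s_i)$. Writing $N_i := \phi_Q(s_i) = k + \phi_{\woc}(s_i)$, the required identity becomes
\[
  \rho^{N_i}\bigl([a_i,b_i]\bigr) = [a_{\psi(i)},b_{\psi(i)}],
\]
understood as unordered diagonals (so up to interchanging the two endpoints), and analogously for symmetric pairs in type $B$.

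I would treat type $B$ first, as it is the cleaner case. Here $\wo = -\Id$, so $\psi = \mathrm{id}$ and $Q = \c^m$, whence $N_i = m$ for every $i$, and the wrap sends the last copy of $s_i$ to the first copy of $s_i$. Since $\rho^m$ is the central symmetry $[p,q]\mapsto[p+m,q+m]$ of the $2m$-gon, it fixes every centrally symmetric pair $\symdiag{a_i,b_i} = \{[a_i,b_i],[a_i+m,b_i+m]\}$ setwise (interchanging its two diagonals). Thus $\rho^{N_i} = \rho^m$ returns the base pair to itself, matching $\Theta$ exactly, and the wrap-around closes automatically.

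The type $A$ case is the main obstacle, because the diagram automorphism $\psi(s_i)=s_{n+1-i}$ is now nontrivial and the counts $N_i$ vary with $i$. Here I would carry out the bookkeeping explicitly: write $a_i = A_i$ and $b_i = -k-1-D_i$ modulo $m$, where $A_i$ and $D_i$ count the ascents and descents of the position sequence $(p_1,\dots,p_n)$ among the first $i-1$ edges of the path, so that $A_i + D_i = i-1$; and read off $N_i = k+\phi_{\woc}(s_i)$ from Theorem~\ref{thm:charac_phi} together with the reflection symmetry $\phi_{\woc}(s_i)+\phi_{\woc}(s_{n+1-i}) = h = m-2k$ recorded in Remark~\ref{rem:computation}. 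Substituting these into the displayed identity and matching endpoints modulo $m$ is then a finite calculation relating the ascent/descent data at $i$ to that at $n+1-i$; the pentagon instance ($\Theta\colon [2,4]\mapsto[0,3]$, realized as the single rotation $[2,4]\mapsto[3,0]$) illustrates the mechanism. Alternatively, in the simply-laced setting one can bypass part of this bookkeeping by invoking $\Theta=\tau^{-1}$ in the repetition quiver $\Z\Omega_c$: the translate acts as $(i,v)\mapsto(i-1,v)$, the identification $\tau^k = [1]$ of Proposition~\ref{prop:auslanderreiten} encodes precisely the $\psi$-twisted wrap, and the coincidence of dimension vectors with roots noted after Proposition~\ref{pr:last reflection on SIN words} transports this back to the diagonal model.
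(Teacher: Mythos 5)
Your proposal is correct and follows essentially the same route as the paper, whose own proof likewise just computes the order of $\Theta$ (via Theorem~\ref{thm:order}) and appeals to the explicit bijections of Section~\ref{sec:results}; you are in fact more precise than the paper's two-line argument, isolating the wrap-around step as the only non-trivial check and settling type $B$ completely. The type-$A$ computation you defer does close: Theorem~\ref{thm:charac_phi} together with the normalization $\sum_{s}\phi_{\woc}(s)=nh/2$ yields $\phi_{\woc}(s_i)=1+D_i+A_{n+1-i}$, whence $a_i+N_i\equiv b_{n+1-i}$ and $b_i+N_i\equiv a_{n+1-i}\pmod m$, i.e.\ $\rho^{N_i}\bigl([a_i,b_i]\bigr)=[a_{n+1-i},b_{n+1-i}]$ with endpoints swapped, exactly as in your pentagon example.
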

\begin{proof}
  The simplicial complex of $k$-triangulations of a convex $m$-gon is isomorphic to the multi-cluster complex of type $A_{m-2k-1}$, so the order of $\Theta$ is given by $2k+h=2k+m-2k=m$ as expected. The simplicial complex of centrally symmetric $k$-triangulations of a regular convex $2m$-gon is isomorphic to the multi-cluster complex of type $B_{m-k}$, so the order of $\Theta$ equals $k+h/2=k+m-k=m$, as well. In type $A$, the result follows from the correspondence between letters in $Q$ and $k$-relevant diagonals in the $m$-gon as described in Section~\ref{sec:results}. In type $B$, the result follows from the correspondence between letters in $Q$ and $k$-relevant centrally symmetric diagonals in the $2m$-gon as described in Section~\ref{sec:results}.
\end{proof}

%%%%%%%%%%%%%%%%%%%%%%%%%%%%%%%%%%%%%%%%%%%%%%%%%%%%%%%%%%%%%%%%
%
%  Section: Open problems
%
%%%%%%%%%%%%%%%%%%%%%%%%%%%%%%%%%%%%%%%%%%%%%%%%%%%%%%%%%%%%%%%%

\section{Open problems} \label{sec:open problems}

We discuss open problems and present several conjectures. We start with two open problems concerning counting formulas for multi-cluster complexes.
\begin{openproblem}\label{open:multi catalan numbers}
  Find \emph{multi-Catalan numbers} counting the number of facets in the multi-cluster complex.
\end{openproblem}
Although a formula in terms of invariants of the group for the number of facets of the generalized cluster complex defined by S. Fomin 
and N. Reading is known \cite[Proposition~8.4]{fomin_generalized_2005}, a general formula in terms of invariants of the group for 
the multi-cluster complex is yet to be found. An explicit formula for type $A$ can be found in \cite[Corollary 17]{jonsson_generalized_2005}. 
In type $B$, a  formula was conjectured in \cite[Conjecture 13]{soll_type-b_2009} and proved in \cite{rubey_crossings_2010}\footnote{The 
proof appeared in Section 7 in the arxiv version, see {\tt http://arxiv.org/abs/0904.1097v2}.}. In the dihedral type $I_2(m)$, the number of facets of the 
multi-cluster complex is equal to the number of facets of a $2k$-dimensional cyclic polytope on $2k+m$ vertices. These three formulas can be reformulated in terms of invariants of the Coxeter groups of type $A$, $B$ and $I_2$ as follows,
\[
  \prod_{0 \leq j < k} \prod_{1 \leq i \leq n} \frac{ d_i + h + 2j }{ d_i + 2j },
\]
where $d_1 \leq \ldots \leq d_n$ are the \emph{degrees} of the corresponding group, and $h$ is its Coxeter number. In general, this product is not an integer. The smallest example we are aware of is type $D_6$ with $k=5$. Therefore, this product cannot count facets of the multi-cluster complex in general. The cyclic action $\Theta$ (see Definition~\ref{def:cyclic action}) on multi-cluster complexes might be useful to solve Open Problem~\ref{open:multi catalan numbers}, it gives rise to the following generalization.
\begin{openproblem}\label{openproblem:CSP}
  Find \emph{multi-Catalan polynomials} $f(q)$ such that the triple
  $$\Big(\big\{ \text{facets of }\Delta(\ckwoc,\wo)\big\},f(q),\Theta\Big)$$
  exhibits the cyclic sieving phenomenon as defined by V.~Reiner, D.~Stanton, and D.~White in~\cite{RSW2004}.
\end{openproblem}
In types $A$, $B$, and $I_2$, there is actually a natural candidate for $f(q)$, namely
\[
  \prod_{0 \leq j < k} \prod_{1 \leq i \leq n} \frac{ [d_i + h + 2j]_q }{ [d_i + 2j]_q },
\]
where $[m]_q = 1 + q + \ldots + q^{m-1}$ is a $q$-analogue of the integer $m$. In the case of multi-triangulations and centrally symmetric multi-triangulations, this triple is conjectured to exhibit the cyclic sieving phenomenon.\footnote{Personal communication with V.~Reiner.} The counting formula in types $A$, $B$ and $I_2$ can be enriched with a parameter $m$ such that it reduces for $k=1$ to the Fuss-Catalan numbers counting the number of facets in the generalized cluster complexes. The next open problem raises the question of finding a family of simplicial complexes that includes the generalized cluster complexes of S.~Fomin and N.~Reading and the multi-cluster complexes. 
\begin{openproblem}
  Construct a family of simplicial complexes which simultaneously contains generalized cluster complexes and multi-cluster complexes.
\end{openproblem}
The next open problem concerns a possible representation theoretic description of the multi-cluster complex in types $ADE$. For $k=1$, one can describe the compatibility by saying that $V \parallel_c V'$ if and only $\dim(\operatorname{Ext}^1(V,V')) = 0$, see~\cite{BMRRT2006}.
\begin{openproblem}
  Describe the multi-cluster complex within the repetition quiver using similar methods.
\end{openproblem}

The following problem extends the diameter problem of the associahedron to the family of multi-cluster complexes, see~\cite[Section~2.3.2]{pilaud_thesis_2010} for further discussions in the case of multi-triangulations.

\begin{openproblem}
  Find the diameter of the facet-adjacency graph of the multi-cluster complex~$\DWk$.
\end{openproblem}

Finally, we present several combinatorial conjectures on the multi-cluster complexes. We start with a conjecture concerning minimal non-faces.
\begin{conjecture}
  Minimal non-faces of the multi-cluster complex $\DWk$ have cardinality $k+1$.
\end{conjecture}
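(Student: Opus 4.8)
The plan is to reformulate the statement entirely in terms of the word $Q=\ckwoc$ and then establish two complementary bounds on the size of a minimal non-face. Recall that a set $P$ of letters of $Q$ is a face of $\DWk=\Delta(Q,\wo)$ if and only if $Q\setminus P$ contains a reduced expression for $\wo$, i.e. $\delta(Q\setminus P)=\wo$. Put $\mathrm{def}(P):=\ell(\wo)-\ell(\delta(Q\setminus P))\ge 0$, so that $P$ is a face exactly when $\mathrm{def}(P)=0$, and $\mathrm{def}$ is monotone under inclusion. Deleting one letter from a word lowers the length of its Demazure product by at most one (a reduced subword realizing the product loses at most that one letter), so $\mathrm{def}$ grows by at most one each time a letter is added to $P$. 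Hence every minimal non-face $P$ has $\mathrm{def}(P)=1$: its complement realizes an element covered by $\wo$. It then remains to prove (A) every subset of size at most $k$ is a face, so minimal non-faces have size $\ge k+1$, and (B) every minimal non-face has size $\le k+1$.

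For (A), i.e. that $\DWk$ is $k$-neighborly, I would argue as in the proof of Lemma~\ref{lem:12}. Writing the $c$-sorting word as $\woc=\c_{K_1}\c_{K_2}\cdots\c_{K_r}$ with $K_r\subseteq\cdots\subseteq K_1\subseteq S$, the word $Q=\ckwoc$ is the concatenation of $k$ full copies of $\c$ with this nested chain of blocks. Given any set $P$ of at most $k$ letters, the goal is to embed the canonical reduced word $\c_{K_1}\cdots\c_{K_r}$ for $\wo$ into $Q$ while avoiding $P$. The $k$ leading copies of $\c$ supply exactly the needed slack: each deleted letter can be compensated by reading the corresponding generator from an earlier copy of $\c$, respecting the left-to-right order. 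Formalizing this as a matching problem and checking Hall's condition (using that every $\c$-block contains each generator of $S$ once and that the $K_i$ are nested) produces a reduced subword for $\wo$ inside $Q\setminus P$, so $\mathrm{def}(P)=0$.

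The main obstacle is (B). Let $P$ be a minimal non-face, so $\mathrm{def}(P)=1$ and $u:=\delta(Q\setminus P)$ is covered by $\wo$; for each $q\in P$ the complement with $q$ restored has Demazure product $\wo$, so every letter of $P$ ``repairs'' the unique defect of $Q\setminus P$. The genuine difficulty is that the repairing letters need not share a common reflection: a fixed reflection $t_i=q_1\cdots q_{i-1}q_iq_{i-1}\cdots q_1$ occurs on average only about $2k/h+1<k+1$ times in a window of length $kn+N$, so the elements of a minimal non-face must spread across several reflection classes, exactly as the $(k+1)$-crossings of type $A$ do. The plan is therefore to control the defect \emph{locally}: pin down the position range in $Q\setminus P$ at which the greedy reduced word for $u$ fails to lengthen, show that each element of $P$ must lie in this range with one of a restricted set of generators, and bound the number of compatible positions by $k+1$. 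I expect the cleanest route to this bound to run through the repetition-quiver description of Section~\ref{sec:cyclic action}: under the identification $\tau^k=[1]$ of Proposition~\ref{prop:auslanderreiten}, a chain of mutually repairing letters should correspond to a $\tau$-orbit segment trapped in a single fundamental domain, whose length is governed by $k$, while the additive root relation of Proposition~\ref{pr:last reflection on SIN words} should convert pairwise incompatibility into the statement that at most $k+1$ such letters coexist.

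As consistency checks and as a source of the correct local model, I would first re-derive (B) in types $A$ and $B$, where minimal non-faces are precisely $(k+1)$-crossings and centrally symmetric $(k+1)$-crossings, and in the rank-$2$ case of Theorem~\ref{ex:I_2}, where the cyclic-polytope description makes the minimal non-faces transparent. Since the assertion is only conjectural, the real content, and the step I expect to resist a uniform treatment, is exactly this passage from the type-by-type crossing models to a Coxeter-uniform bound on how far a single Demazure defect can spread.
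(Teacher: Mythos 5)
This statement is an open conjecture in the paper, so there is no complete proof to compare against; the paper only establishes the lower bound and collects evidence for the upper bound, and your proposal is honest about having exactly the same shape. Your part (A) is essentially the paper's own argument: the paper deduces from the $c$-factorization $\ckwoc = \c^k\c_{K_1}\cdots\c_{K_r}$ with $K_r\subseteq\cdots\subseteq K_1$ that the complement of any $k$ letters still contains a reduced expression for $\wo$; your Hall-condition matching is a (correct, slightly more formal) implementation of the same leftward-compensation idea. Your preliminary reduction is also sound and is a nice observation not made explicit in the paper: since deleting one letter lowers $\ell(\delta(\cdot))$ by at most one, a minimal non-face $P$ has Demazure defect exactly $1$, i.e.\ $\delta(Q\setminus P)$ is covered by $\wo$. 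Your proposed consistency checks match the paper's evidence precisely: types $A$ and $B$ via the crossing models (Theorem~\ref{th:typeB}), the dihedral case via the explicit description of faces of $\Delta((a,b,a,b,\dots),\wo)$, and (in the paper) computations in ranks $3$ and $4$ for $k=2$.

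The genuine gap is part (B), and you correctly identify it as the open content: neither your repetition-quiver program nor anything in the paper proves that minimal non-faces have size at most $k+1$ in general. One concrete warning about the heuristic guiding your plan: the claim that the elements of a minimal non-face ``must spread across several reflection classes'' is already false in type $B_2$ with $k=1$. In Example~\ref{ex:B2} the word is $(s_1,s_2,s_1,s_2,s_1,s_2)$ with reflection sequence $(s_1,\,s_1s_2s_1,\,s_2s_1s_2,\,s_2,\,s_1,\,s_1s_2s_1)$, and $\{q_1,q_5\}$ is a minimal non-face (the facets are the cyclically consecutive pairs) whose two letters carry the \emph{same} reflection $s_1$. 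So the average count $2k/h+1<k+1$ bounds reflection classes only on average, not individually, and any uniform argument for (B) must accommodate minimal non-faces concentrated in a single reflection class. Relatedly, recall from the paper's discussion after the conjecture that no binary compatibility relation can govern faces in general (the set $\mathcal{A}$ in type $B_3$, $k=2$, has all pairs lying in minimal non-faces yet is itself a face), which suggests that any local model for the defect must be genuinely $(k+1)$-ary rather than built from pairwise incompatibilities as in type $A$.
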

Since $w_\circ$ is $c$-sortable, we have $\c^k\woc = \c^k\c_{K_1}\c_{K_2}\cdots \c_{K_r}$ with $K_r \subseteq \ldots \subseteq K_2 \subseteq K_1$. This implies that the complement of any $k$ letters still contains a reduced expression for $\wo$. In other words, minimal non-faces have at least cardinality $k+1$. Moreover, using the connection to multi-triangulations and centrally symmetric triangulations, we see that the conjecture holds in types~$A$ and $B$. It also holds in the case of dihedral groups: it is not hard to see that the faces of the multi-cluster complex are given by subwords of $\ckwoc=(a,b,a,b,\dots)$ which do not contain $k+1$ pairwise non-consecutive letters (considered cyclically). The conjecture was moreover tested for all multi-cluster complexes of rank $3$ and $4$ with $k=2$.

In types $A$ and $I_2(m)$, there is a binary compatibility relation on the letters of~$\ckwoc$ such that the faces of the multi-cluster complex can be described as subsets avoiding~$k+1$ pairwise not compatible elements. We remark that this is not possible in general: in type $B_3$ with $k=2$, as in Example~\ref{ex:B3}, $\Delta_c^2(B_3)$ is isomorphic to the simplicial complex of centrally symmetric $2$-triangulations of a regular convex $10$-gon. Every pair of elements in the set $\mathcal{A}=\{ \symdiag{1,4}, \symdiag{4,7}, \symdiag{7,10}  \}$ is contained in a minimal non-face. But since $\mathcal{A}$ does not contain a $3$-crossing, it forms a face of~$\Delta_c^2(B_3)$.

Theorem~\ref{thm:if_part_conj} gives an alternative way of defining multi-cluster complexes as subword complexes $\Delta(Q,\wo)$ where the word $Q$ has the SIN-property. It seems that this definition covers indeed all subword complexes isomorphic to multi-cluster complexes.

\begin{conjecture}
  Let $Q$ be a word in $S$ with complete support and $\pi \in W$. The subword complex $\Delta(Q,\pi)$ is isomorphic to a 
  multi-cluster complex if and only if $Q$ has the SIN-property and $\pi=\delta(Q)=\wo$.
\end{conjecture}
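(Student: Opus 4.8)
The forward implication is immediate from what is already established: if $Q$ has the SIN-property and $\pi=\delta(Q)=\wo$, then Theorem~\ref{thm:if_part_conj} gives that $Q$ equals $\ckwoc$ up to commutations for some Coxeter element $c$ and some $k\ge 0$, and Proposition~\ref{pr:up to commutations} then yields $\Delta(Q,\wo)\cong\Delta(\ckwoc,\wo)=\DWk$. The whole difficulty lies in the converse: assuming only that $\Delta(Q,\pi)$ is \emph{abstractly} isomorphic to some multi-cluster complex, one must recover the rigid word-combinatorics of $Q$, namely that $\pi=\wo$ and that $Q$ has the SIN-property. The plan is to split this into a numerical part (locating $\pi$) and a structural part (the alternation of non-commuting generators), the latter being the genuine obstruction.

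The numerical part is accessible. Since every multi-cluster complex is a simplicial sphere and sphericity is an isomorphism invariant, $\Delta(Q,\pi)$ is a sphere, so the Knutson--Miller criterion \cite[Corollary~3.8]{knutson_subword_2004} forces $\delta(Q)=\pi$. To pin down $\pi=\wo$ I would count. If the multi-cluster complex in question has rank $n_0$, parameter $k$, and $N_0=\ell(w_\circ^{(0)})$ positive roots, then it has dimension $kn_0-1$ and exactly $kn_0+N_0$ vertices. Matching dimensions with $\Delta(Q,\pi)$, whose facets have size $r-\ell(\pi)$ where $r$ is the number of letters of $Q$, gives $r=\ell(\pi)+kn_0$; matching vertex counts and using that a subword complex has at most $r$ vertices yields $kn_0+N_0\le r=\ell(\pi)+kn_0$, that is $N_0\le\ell(\pi)$. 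Once one knows that the Coxeter type of the multi-cluster complex must coincide with $W$, so that $N_0=N=\ell(\wo)$, the chain $N\le\ell(\pi)\le N$ forces $\ell(\pi)=N$, hence $\pi=\wo$; moreover equality in the vertex bound shows that \emph{every} letter of $Q$ is a vertex. Ruling out accidental cross-type isomorphisms (so that indeed $N_0=N$) is a first point needing attention; I expect it to follow from comparing $f$- or $h$-vectors, or the multisets of vertex-links, across types.

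With $\pi=\delta(Q)=\wo$ in hand, Theorem~\ref{thm:if_part_conj} reduces the statement to a single combinatorial assertion: that $Q\psi(Q)$ has the intervening neighbors property, i.e.\ that every non-commuting pair $s,t\in S$ alternates in the bi-infinite word $\widetilde Q$. This is the main obstacle, and the reason the statement is only conjectural. The difficulty is exactly that an abstract simplicial isomorphism sees neither the colours of the letters nor their linear order, whereas intervening neighbors is a statement about that order; there is no a priori reason for it to be an isomorphism invariant.

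To attack it I would try to characterise multi-cluster complexes intrinsically and transport that characterisation across the isomorphism, and I see three plausible routes. First, one could exploit the rotation isomorphisms of Proposition~\ref{pr:rotated} together with the cyclic action of Section~\ref{sec:cyclic action}: a multi-cluster complex carries a great deal of self-similarity under rotation, and one might hope to show that a word whose bi-infinite completion is \emph{not} alternating cannot yield a complex with this much symmetry. Second, one could induct on the rank through vertex links: by Lemma~\ref{lem:parabolic} the link of a ``negative'' letter is a multi-cluster complex of a maximal parabolic, so abstractly recognising such letters and peeling them off would let induction take over, the obstacle being to single out these distinguished vertices purely from the face lattice. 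Third, granting the facet-maximality Conjecture~\ref{conj:maximal} (and the conjecture that minimal non-faces have cardinality $k+1$) would supply strong numerical invariants that should distinguish SIN-words among all complete-support words of a given length. I expect the alternation property to be the crux, and its resolution --- most likely via the link-induction route combined with an abstract reconstruction of which vertices play the role of the $-\alpha_s$ --- to be where essentially all the work lies.
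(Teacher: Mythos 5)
The statement you were handed is one of the paper's \emph{open conjectures} (Section~9): the paper offers no proof of it, so there is no ``paper proof'' to compare against, and a complete blind proof was never on the table. Your easy direction is exactly the paper's reasoning: SIN together with $\pi=\delta(Q)=\wo$ gives $Q=\ckwoc$ up to commutations by Theorem~\ref{thm:if_part_conj}, and Proposition~\ref{pr:up to commutations} then yields $\Delta(Q,\wo)\cong\DWk$. Your diagnosis of the converse also matches the paper's own remarks following the conjecture: sphericity forces $\delta(Q)=\pi$ by the Knutson--Miller criterion, and the paper then simply states that ``it remains to show that $\pi=\wo$ and that $Q$ has the SIN-property,'' offering only the heuristic that non-SIN words should have too few facets --- which is precisely Conjecture~\ref{conj:maximal} and literally your third proposed route. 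The paper's verification in dihedral types (neighborliness and the upper bound theorem for cyclic polytopes) confirms your instinct that facet-maximality is the natural invariant to exploit.

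Your counting argument for $\pi=\wo$ goes a small but genuine step beyond what the paper records, and it is sound for $k\ge 1$ once the target complex is known to be of type $W$: facet size gives $r=\ell(\pi)+kn$; every letter of $\ckwoc$ is a vertex (from the factorization $\c^k\c_{K_1}\cdots\c_{K_r}$ with nested supports, removing any one letter leaves a reduced expression for $\wo$), so matching vertex counts gives $kn+N\le r$, hence $\ell(\pi)\ge N$ and $\pi=\wo$, with the bonus that every letter of $Q$ is a vertex. Two caveats you should make explicit: (a) your claim that the multi-cluster complex has $kn_0+N_0$ vertices fails for $k=0$, where $\DWk=\{\emptyset\}$ has no vertices at all --- and indeed for $k=0$ the conjecture as literally stated degenerates, since any reduced word $Q$ for any full-support $\pi\ne\wo$ satisfies $\Delta(Q,\pi)=\{\emptyset\}\cong\Delta^{0}_c(W)$, so one must read $k\ge 1$; (b) the cross-type identification you flag is genuinely unresolved, and nothing in the paper rules out accidental isomorphisms across types, though $f$-vector comparisons should settle it in practice. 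The real gap is the one you name: recovering the alternation of non-commuting generators (hence SIN) from a purely abstract simplicial isomorphism, which sees neither the letters' labels nor their linear order. None of your three routes is carried out, and the paper does not carry them out either. So the correct framing is that you have proved one implication, reduced the other to the SIN step modulo the caveats above, and accurately located the open core --- a fair and useful assessment, but it should be presented as a partial result, not a proof.
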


The fact that $\pi=\delta(Q)$ is indeed necessary so that the subword complex is a sphere. It remains to show that $\pi=\wo$ and that $Q$ has the SIN-property. One reason for this conjecture is that if $Q$ does not have the SIN-property then it seems that the subword complex $\Delta(Q,\wo)$ has fewer facets than required. Indeed, we conjecture that multi-cluster complexes maximize the number of facets among all subword complexes with a word $Q$ of a given size. 

\begin{conjecture}\label{conj:maximal}
  Let $Q$ be any word in $S$ with $kn+N$ letters (where $N$ denotes the length of~$\wo$) and $\Delta(Q,\wo)$ 
  be the corresponding subword complex. The number of facets of $\Delta(Q,\wo)$ is less than or equal to the number of facets 
  of the multi-cluster complex $\DWk$. Moreover, if both numbers are equal, then the word $Q$ has the SIN-property.
\end{conjecture}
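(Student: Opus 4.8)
The plan is to translate the facet count into a coefficient in the NilCoxeter algebra $\mathcal{N}_W$ of $W$ and then run a length-preserving exchange induction toward $\ckwoc$. Recall $\mathcal{N}_W$ has $\mathbb{Z}$-basis $\{u_w : w \in W\}$ with $u_s^2=0$ and the braid relations, so that $u_{q_{i_1}}\cdots u_{q_{i_j}}$ equals $u_{q_{i_1}\cdots q_{i_j}}$ when $(q_{i_1},\dots,q_{i_j})$ is reduced and vanishes otherwise. First I would note that if $\delta(Q)\neq\wo$ then $\Delta(Q,\wo)$ is empty and there is nothing to prove, so we may assume $\delta(Q)=\wo$; then the complex is a sphere, facets have exactly $kn$ letters, and a subword is a facet precisely when its complement is a reduced word for $\wo$. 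Expanding the product over the letters of $Q$ gives the clean identity
\[
  \#\{\text{facets of }\Delta(Q,\wo)\} \;=\; \big[u_{\wo}\big]\prod_{i=1}^{kn+N}\big(1+u_{q_i}\big),
\]
where $[u_{\wo}]$ extracts the coefficient of $u_{\wo}$. The conjecture thus becomes the statement that this coefficient is maximized, over all words of length $kn+N$, exactly at the words equal to $\ckwoc$ up to commutations, which by Theorem~\ref{thm:if_part_conj} are precisely the words with the SIN-property.

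Second, I would extract two monotonicity facts. Inserting one extra factor $(1+u_s)$ changes the coefficient by the number of reduced $\wo$-subwords that use the new letter, so insertion never decreases the count and strictly increases it exactly when the new letter is a vertex (compare Lemma~\ref{lem:12}); in particular every maximizer has full support with all letters vertices. By Propositions~\ref{pr:up to commutations} and~\ref{pr:rotated} the count is moreover invariant under commutations and under the rotation $Q\mapsto\rotatedword{Q}{s}$ along an initial letter $s$, so we may work with $Q$ up to this commutation-and-rotation equivalence. The induction would then run on the \emph{SIN-defect} of $Q$, defined as the number of consecutive equal-letter occurrences, taken within the $\{s,t\}$-restriction of the bi-infinite word $\widetilde Q$ for each non-commuting pair $s,t$; this defect vanishes precisely when $\widetilde Q$ has intervening neighbors, so that defect $0$ together with $\delta(Q)=\wo$ and length $kn+N$ forces $Q$ to be a SIN-word, hence $Q\equiv\ckwoc$ by Theorem~\ref{thm:if_part_conj}. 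All such words have the same facet count since they give isomorphic complexes, which pins down the conjectured maximum and settles the base case.

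The engine of the induction is the only length-preserving local move available, namely the adjacent transposition of two non-commuting letters $q_i=s$, $q_{i+1}=t$. Writing the product as $L\,(1+u_s)(1+u_t)\,R$ before and $L\,(1+u_t)(1+u_s)\,R$ after the swap, the symmetric part $1+u_s+u_t$ cancels and the change in facet count is exactly
\[
  \big[u_{\wo}\big]\,L\,(u_{ts}-u_{st})\,R,
\]
that is, the number of reduced $\wo$-subwords using the middle pair in the order $t,s$ minus those using it in the order $s,t$. The hard part will be to show that a sequence of such transpositions, each chosen to reduce the SIN-defect, can be carried out with this difference always nonnegative, and strictly positive whenever a genuine defect is removed; this would simultaneously give the inequality and the equality clause. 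Controlling the sign of $[u_{\wo}]\,L\,(u_{ts}-u_{st})\,R$ is where the real difficulty lies: it is a global quantity depending on all of $L$ and $R$, and once three or more pairwise non-commuting generators interact the braid relations entangle the contributions in a way that resists a purely local estimate. This is consistent with the absence of a clean closed form for the facet count -- the naive product $\prod_{0\le j<k}\prod_i (d_i+h+2j)/(d_i+2j)$ is not even integral in general (for instance in type $D_6$ with $k=5$) -- which rules out proving the bound by exhibiting a formula and comparing.

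Finally, I would keep two fallback routes in mind. The universality of Theorem~\ref{thm:universal} realizes every $\Delta(Q,\wo)$ as the link of a face of a multi-cluster complex, but only of a \emph{larger} one, so it does not by itself compare words of the same length and seems insufficient here. A termwise domination -- bounding, for each reduced word $\mathbf{r}$ of $\wo$, its number of occurrences in $Q$ by its number of occurrences in $\ckwoc$ -- is false, since different words favor different reduced words of $\wo$; any successful injection of facets must therefore mix reduced words, which again points back to the global sign control above as the genuine crux.
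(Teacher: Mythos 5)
You have set out to prove something that the paper itself does not prove: this statement is Conjecture~\ref{conj:maximal}, and it is left \emph{open}. The only case the paper settles is the dihedral one, by an entirely different, geometric route: by Theorem~\ref{ex:I_2} the multi-cluster complex of type $I_2(m)$ is the boundary complex of a cyclic polytope, every spherical subword complex of type $I_2(m)$ is polytopal (Corollary~\ref{cor:polytopality} together with Theorem~\ref{ex:I_2}), so the inequality follows from the upper bound theorem; the equality clause follows because a word with two adjacent equal letters gives a non-neighborly complex --- after rotating (Proposition~\ref{pr:rotated}) one may assume the last two letters agree, and then among the first $2k+1$ letters some generator occurs at most $k$ times, and that set of at most $k$ letters is a non-face since its complement only carries reduced words of length at most $m-1 < \ell(\wo)$. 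Nothing in the paper extends this beyond rank $2$, so there is no general proof for your attempt to be measured against.

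As for your proposal itself: the setup is sound --- the NilCoxeter identity for the facet count is correct (when $\delta(Q)\neq\wo$ the complex has no facets, so the reduction is legitimate), the invariance under commutations and rotations is Propositions~\ref{pr:up to commutations} and~\ref{pr:rotated}, and the local computation that an adjacent swap of non-commuting letters changes the count by $[u_{\wo}]\,L\,(u_{ts}-u_{st})\,R$ is right. But the argument stops exactly where the conjecture lives, and you say so yourself: nothing is proved about the sign of that commutator coefficient. Two distinct gaps remain. First, even granting that some swaps have nonnegative difference, you would need that \emph{for every} non-SIN word there exists a swap that simultaneously decreases your SIN-defect \emph{and} has nonnegative difference; a priori these two sets of swaps could be disjoint, and the difference is a global quantity in $L$ and $R$ with no visible positivity structure once three pairwise non-commuting generators interact. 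Second, the equality clause needs strict positivity along at least one step of every such path, which is again unaddressed. So what you have is a reasonable reformulation and a programme (whose fallback analysis --- that Theorem~\ref{thm:universal} compares against larger complexes only, and that termwise domination over reduced words fails --- is accurate), but not a proof; this is consistent with the statement's status in the paper as an open conjecture, verified only in type $I_2(m)$ and there by polytopality rather than by any exchange induction.
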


We remark that the previous two conjectures hold for the dihedral types~$I_2(m)$. 
In this case, the multi-cluster complex is isomorphic to the boundary complex of a cyclic polytope, which is a polytope that maximizes the number of facets among all polytopes in fixed dimension on a given number of vertices, see e.g.~\cite{ziegler}. Moreover, we present below a simple polytope theory argument in order to show that if a word does not satisfy the SIN-property, then the corresponding subword complex has strictly less facets than the multi-cluster complex.
First note that Corollary~\ref{cor:polytopality} and Theorem~\ref{ex:I_2} imply that 
all spherical subword complexes of type~$I_2(m)$ are polytopal. By the upper bound theorem, a polytope has as many facets as a cyclic polytope if and only if it is neighborly, see e.g. \cite{ziegler}. Therefore, it is enough to prove that if~$Q = (q_1,\ldots,q_r)$ with~$r=2k+m$ is a word in~$S = \{a,b\}$ containing two consecutive letters that are equal, then the subword complex~$\Delta(Q,\wo)$ is not neighborly. Since this is a $2k$-dimensional complex, this is equivalent to show that there is a set of~$k$ letters of~$Q$ which do not form a face. 
By applying rotation of letters and Proposition~\ref{pr:rotated}, we can assume without loss of generality that the last two letters of~$Q$ are equal.
Among the first~$2k+1$ letters of~$Q$, one of the generators~$a$ or~$b$ appears no more than~$k$ times. The set of these no more than~$k$ letters is not a face of the subword complex. The reason is that the reduced expressions in the complement of this set in~$Q$ have length at most~$m-1$, which is one less than the length of~$\wo$.

\medskip

In view of Corollary \ref{cor:polytopality}, the following conjecture restricts the study of~\cite[Question~6.4]{knutson_subword_2004}.

\begin{conjecture}\label{conj:polytopality}
  The multi-cluster complex is the boundary complex of a simplicial polytope.
\end{conjecture}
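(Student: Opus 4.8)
The plan is to realize $\DWk$ as a complete simplicial fan and then to certify that this fan is the normal fan of a (simple) polytope; that polytope is the desired generalized multi-associahedron, and its polar dual is the simplicial polytope whose boundary complex is $\DWk$. By Corollary~\ref{cor:polytopality} this problem is equivalent to realizing every spherical subword complex, and by Theorem~\ref{thm:universal} it then suffices to treat the multi-cluster complexes themselves, since links of faces of polytopes are again polytopal. The natural first step is to attach to each letter $q$ of $Q=\ckwoc$ a vector in the ambient space $V$: the obvious candidates are the roots $\Lrc(q)\in\Phipm$ recorded by the root function $\r_{F_0}$ of Definition~\ref{def:rootfunction}. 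One would then declare the maximal cones to be those spanned by the complements of the facets of $\DWk$ and try to prove, using the flip rule of Lemma~\ref{le:uniquevertex} and the root-transfer formula of Lemma~\ref{le:flip}, that these cones meet along walls and cover $V$, yielding a \emph{multi-Cambrian fan} that refines the Coxeter fan and carries the cyclic symmetry $\Theta$ of Definition~\ref{def:cyclic action} as a linear automorphism, its order being prescribed by Theorem~\ref{thm:order}.

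To pass from the fan to a polytope I would look for a piecewise-linear convex support function, most plausibly by adapting the generalized-permutahedron construction of~\cite{hohlweg_permutahedra_2011}: start from a highly symmetric seed polytope whose normal fan already refines to the multi-Cambrian fan, such as a product of cyclic polytopes modelled on the rank-$2$ realizations of Theorem~\ref{ex:I_2}, and remove exactly the facet-defining hyperplanes not required by the facets of $\DWk$. An alternative, inductive route would build the realization on the rank of $W$: realize the link of a vertex $c_i$ as a lower-rank generalized multi-associahedron via the parabolic identification of Lemma~\ref{lem:parabolic}, and then reconstruct $\DWk$ by a controlled placement of the vertex dual to $c_i$, using that the facets containing $c_i$ are exactly the lifts of the facets of the parabolic complex.

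The decisive obstacle is geometric rather than combinatorial. For $k\geq 2$ the naive assignment $q\mapsto\Lrc(q)$ breaks down: distinct copies of the same simple negative root are forced to share a ray, so the prospective rays carry multiplicities and no genuine fan in $V$ can record the full face lattice of $\DWk$; one is driven to work in the larger letter space $\mathbb{R}^{kn+N}$ and project, at which point convexity is no longer automatic. This is precisely the point at which the three existing constructions fail, as recorded in Section~\ref{subsec:Genmultiasso}: the brick polytope of~\cite{pilaud_brick_b_2011} realizes the case $k=1$ but not the multi-associahedron, and the rigidity and star-polygon secondary-polytope heuristics of~\cite{pilaud_thesis_2010} also stop short. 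I therefore expect the crux to be the selection of the correct vectors --~perhaps guided by the dimension vectors of the repetition quiver $\Z\Omega_c$ and their coincidence with the roots $\beta_q$ from Proposition~\ref{pr:last reflection on SIN words}~-- together with a convexity estimate that would close the inductive step; neither ingredient is available with present techniques, which is why the statement is recorded here as a conjecture rather than a theorem.
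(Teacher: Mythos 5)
You should be clear at the outset that the statement you were given is Conjecture~\ref{conj:polytopality}: the paper does not prove it, and it remains open. There is therefore no proof in the paper to compare your attempt against; the only yardsticks are the surrounding partial results, namely the reduction of all spherical subword complexes to multi-cluster complexes (Theorem~\ref{thm:universal} and Corollary~\ref{cor:polytopality}), the case $k=1$ settled by the generalized associahedra of \cite{chapoton_polytopal_2002,hohlweg_permutahedra_2011,pilaud_brick_b_2011}, the rank-$2$ case realized by cyclic polytopes (Theorem~\ref{ex:I_2}), and the sporadic low-dimensional type $A$ and $B$ cases listed in Section~\ref{sec:polytopality}. Judged as a proof, your proposal has a gap coextensive with the open problem itself: you never construct the claimed ``multi-Cambrian fan'' for $k\geq 2$, never verify that the proposed cones tile $V$, and never produce a convex support function; you acknowledge this explicitly in your final paragraph. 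So the honest verdict is that this is a research program, not a proof --- which is exactly the status the paper assigns to the statement.

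That said, your sketch is a faithful and essentially accurate map of the terrain. You correctly invoke the reduction via Corollary~\ref{cor:polytopality} and Theorem~\ref{thm:universal}, and your identification of the decisive obstruction is right and can be sharpened: for $k\geq 2$ the complex $\DWk$ is a $(kn-1)$-sphere, so any polytopal realization is $kn$-dimensional, while $V$ has dimension $n$; hence no complete fan in $V$ can carry the full face lattice, independently of the observation that the naive assignment $q\mapsto\Lrc(q)$ collapses the $k$ copies of each letter of $\c$ onto a single ray. Your list of failed geometric approaches matches the paper's Section~\ref{subsec:Genmultiasso} (rigidity of pseudo-triangulations, star-polygon secondary polytopes, brick polytopes). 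A few slips worth fixing: the brick polytope construction is from \cite{pilaud_brick_2011}, while \cite{pilaud_brick_b_2011} is the companion paper recovering the $c$-generalized associahedra, i.e.\ only the case $k=1$; Cambrian fans \emph{coarsen} rather than refine the Coxeter fan; and there is no reason a priori that $\Theta$ acts linearly on a putative realization --- that would be a desideratum, not a given (compare the cyclic-sieving discussion around Theorem~\ref{thm:order}). None of these affect the conclusion: your attempt correctly stops where current techniques stop, which is precisely why the paper records the statement as a conjecture.
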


In types $A$ and $B$, this conjecture coincides with the conjecture on the existence of the corresponding multi-associahedra, see~\cite{jonsson_generalized_2005,soll_type-b_2009}, and Theorem~\ref{ex:I_2} shows that this conjecture is true for dihedral groups.

%%%%%%%%%%%%%%%%%%%%%%%%%%%%%%%%%%%%%%%%%%%%%%%%%%%%%%%%%%%%%%%%
%
%  Section: Acknowledgments
%
%%%%%%%%%%%%%%%%%%%%%%%%%%%%%%%%%%%%%%%%%%%%%%%%%%%%%%%%%%%%%%%%

\section*{Acknowledgments}

The authors would like to thank Hugh Thomas for pointing them to the results in~\cite{igusa_exceptional_2010}, and Christophe Hohlweg, Carsten Lange, Vincent Pilaud, Hugh Thomas and G\"unter M. Ziegler for important comments and remarks on preliminary versions of this article. The first two authors are thankful to Drew Armstrong, Carsten Lange, Emerson Leon, Ezra Miller, Vincent Pilaud and Luis Serrano for useful discussions that took place during the $23$rd FPSAC Conference in Reykjavik. They are particularly grateful to Carsten Lange and Emerson Leon for numerous fruitful discussions that took place in the Arnimallee $2$ Villa in Berlin.

We used the computer algebra system {\tt Sage}~\cite{sage} for implementing the discussed objects, and to test the conjectures.

%%%%%%%%%%%%%%%%%%%%%%%%%%%%%%%%%%%%%%%%%%%%%%%%%%%%%%%%%%%%%%%%
%
%  Section: References
%
%%%%%%%%%%%%%%%%%%%%%%%%%%%%%%%%%%%%%%%%%%%%%%%%%%%%%%%%%%%%%%%%

\bibliographystyle{amsalpha}
\bibliography{CLS2013}

\end{document}